\providecommand{\cO}{\mathcal{O}}
\providecommand{\Gm}{\mathbb{G}_m}
\providecommand{\calL}{\mathcal{L}}
\providecommand{\calH}{\mathcal{H}}
\newcommand{\Hyp}{\mathfrak{H}^3}
\newcommand{\bbR}{\mathbb{R}}
\newcommand{\bbQ}{\mathbb{Q}}
\newcommand{\bbZ}{\mathbb{Z}}
\newcommand{\bbH}{\mathbb{H}}
\newcommand{\bbC}{\mathbb{C}}
\newcommand{\bbA}{\mathbb{A}}
\newcommand{\bbN}{\mathbb{N}}
\newcommand{\Ia}{\mathfrak{a}}
\newcommand{\Ip}{\mathfrak{p}}
\newcommand{\IP}{\mathfrak{P}}
\newcommand{\IQ}{\mathfrak{Q}}
\newcommand{\co}{\mathfrak{o}}
\newcommand{\bsl}[2]{\ensuremath{#1}\backslash\ensuremath{#2}}
\newcommand{\up}[1]{\,^{#1}}
\DeclareMathOperator{\Gal}{Gal}
\DeclareMathOperator{\id}{Id}
\DeclareMathOperator{\GL}{GL}
\DeclareMathOperator{\SL}{SL}
\DeclareMathOperator{\PGL}{PGL}
\DeclareMathOperator{\PSL}{PSL}
\DeclareMathOperator{\SU}{SU}
\DeclareMathOperator{\nrd}{nrd}
\DeclareMathOperator{\Nr}{N}
\DeclareMathOperator{\tr}{Tr}
\DeclareMathOperator{\im}{Im}
\DeclareMathOperator{\inn}{int}
\DeclareMathOperator{\Aut}{Aut}
\DeclareMathOperator{\Ram}{Ram}
\DeclareMathOperator{\vol}{vol}
\DeclareMathOperator{\disc}{d}
\DeclareMathOperator{\N}{N}
\DeclareMathOperator{\Res}{Res}
\DeclareMathOperator{\rk}{rk}
\DeclareMathOperator{\Br}{Br}
\theoremstyle{plain}
\newtheorem{theorem}{Theorem}[section]
\newtheorem{corollary}[theorem]{Corollary}
\newtheorem{lemma}[theorem]{Lemma}
\newtheorem*{maintheorem}{Main Theorem}
\newtheorem*{theorem*}{Theorem}
\theoremstyle{definition}
\newtheorem{definition}[theorem]{Definition}
\newtheorem{remark}[theorem]{Remark}
\title[On the growth of the first Betti number]{On the growth of the first Betti number of arithmetic hyperbolic 3-manifolds}
\author[S. Kionke]{Steffen Kionke}
\address{Faculty of Mathematics,
         University of Vienna, 
         Nordbergstrasse 15, 
         A-1090 Vienna, Austria.}
\email{steffen.kionke@univie.ac.at}
\author[J. Schwermer]{Joachim Schwermer}
\address{ Faculty of Mathematics,
         University of Vienna, 
         Nordbergstrasse 15, 
         A-1090 Vienna, Austria.\newline
         \indent Erwin Schr\"odinger International Institute for Mathematical Physics,
         Boltzmanngasse 9, A-1090 Vienna, Austria.}
\email{Joachim.Schwermer@univie.ac.at}
\thanks{The authors were supported by FWF Austrian Science Fund, grant number P 21090-N13}
\date{\today}
\keywords{Arithmetic groups, cohomology, hyperbolic manifolds, Betti numbers}
\subjclass[2000]{Primary 11F75, 57N10 ; Secondary 57M10, 22E40  }
\begin{document}
\begin{abstract}
We calculate the Lefschetz number of a Galois automorphism in the cohomology of certain arithmetic congruence groups
arising from orders in quaternion algebras over number fields.
As an application we give a lower bound for the first Betti number of a class of arithmetically defined hyperbolic
$3$-manifolds and we deduce the following theorem: 
Given an arithmetically defined cocompact subgroup $\Gamma \subset \SL_2(\bbC)$,
provided the underlying quaternion algebra meets some conditions, there is
a decreasing sequence $\{\Gamma_i\}_i$ of finite index subgroups of $\Gamma$ such that the first 
Betti number satisfies \[ b_1(\Gamma_i) \gg [\Gamma:\Gamma_i]^{1/2} \]  as $i$ goes to infinity.
\end{abstract}

\maketitle

\section{Introduction}

\subsection{The first Betti number of hyperbolic $3$-manifolds}
  Every orientable hyperbolic 3-manifold is isometric to the quotient $\Hyp/\Gamma$ of hyperbolic
  $3$-space $\Hyp$ by a discrete torsion-free subgroup $\Gamma$ of the group of 
  orientation-preserving isometries of $\Hyp$. The latter group is isomorphic to the
  connected group $\PGL_2(\bbC)$, the real Lie group $\SL_2(\bbC)$ modulo its centre.
  Generally, a discrete subgroup of $\PGL_2(\bbC)$ is called a Kleinian group. 

  Within Thurston's geometrization program and its subsequent treatment by Perelman the class of hyperbolic 3-manifolds
  plays a fundamental role but is still not well understood. One of the open problems is the fundamental
  conjecture in 3-manifold theory, stated by Waldhausen \cite{Waldhausen1968} in 1968, which says:
  given an irreducible 3-manifold $M$ with infinite fundamental group there exists a finite cover $M'$ of $M$
  which is Haken, that is, it is irreducible and contains an embedded incompressible surface.
  This so-called virtual Haken conjecture is the source for the even stronger virtual positive Betti number
  conjecture. Within the class of hyperbolic $3$-manifolds it states that,
  given a hyperbolic $3$-manifold $M$ there exists a finite cover $M'$ with non-vanishing first Betti number $b_1(M')$.  

  These two conjectures concern finite covering spaces of 3-manifolds. 
  Thus, one is naturally led to ask how various algebraic or geometric invariants attached to hyperbolic $3$-manifolds
  behave in finite-sheeted covers (see e.g. \cite{Lackenby2010}). 
  Our object of concern will be the first Betti number in the case of arithmetically defined hyperbolic $3$-manifolds. 
  Among hyperbolic $3$-manifolds, the ones originating with arithmetically defined Kleinian groups form a class of special interest.
  Due to the underlying connections with number theory this class seems to be in many ways more tractable.
  Indeed, in many cases geometric techniques or results in the theory of automorphic forms 
  make it possible to construct non-vanishing (co)-homology classes on these hyperbolic manifolds up to a finite cover
  (see, e.g., \cite{Schwermer2010}, chap. I, for a survey of various results  or,
  \cite{LabesseSchwermer1986}, \cite{Lubotzky1996}, \cite{Rajan2004}).

  Investigating the first Betti number, it is quite natural to consider its growth rate in a nested  sequence
  $\{\Gamma_i \}_{i \in \bbN}$ of finite index (normal) subgroups $\Gamma_i \subset \Gamma$ (whose intersection is the identity)
  for a given arithmetically defined Kleinian group $\Gamma$. One defines the first Betti number gradient which is the limit
  of the ratio of the first Betti number $b_1(\Gamma_i)$ by the index $[\Gamma : \Gamma_i ]$. 
  This is a special case of a general concept:
  Let  $\Gamma$  be a lattice in a semi-simple real Lie group $G$. If $\{\Gamma_i \}_{i \in \bbN}$ is a nested sequence of
  finite index normal subgroups $\Gamma_i \subset \Gamma$ (whose intersection is the identity) 
  one can form the quotients 
  \begin{equation*}
    \beta_j(\Gamma_i) = \frac {\dim  H_j(\Gamma_{i}, \bbC)} {[\Gamma : \Gamma_i]}.
  \end{equation*}
  It is known by a result of L\"uck \cite{Lueck1994} that the $\beta_j(\Gamma_i)$ converge to the j-th $L^2$-Betti number
  of $\Gamma$, that is, the limit $\lim_{i} \beta_j(\Gamma_i)$ exists for each $j$.
  The limit is non-zero if and only if the rank $\rk_{\bbC}G$ of $G$ equals the rank $\rk_{\bbC}K$ of a maximal compact
  subgroup $K \subset G$ and $j = \frac{1}{2} \dim(G/K)$. There are several works, 
  notably by de George and Wallach \cite{deGeorgWallach1978}, Savin \cite{Savin1989}, and Rohlfs and Speh \cite{RohlfsSpeh1987-2} among others, in which one finds precise 
  results pertaining to the actual value of this limit in the case where $\delta(G): =  \rk_{\bbC}G  - \rk_{\bbC}K  = 0$.

  However, in the situation of arithmetically defined hyperbolic $3$-manifolds, that is,
  $G$ is the group $\PGL_2(\bbC)$ one has  $\delta(G): =  \rk_{\bbC}\PGL_2(\bbC)  - \rk_{\bbC}K  = 1$, thus, 
  \begin{equation*}
   \lim_{i} \beta_j(\Gamma_i) = 0.
  \end{equation*}
  In particular, this assertion is valid for $j = 1$. As a consequence, the sequence of first
  Betti numbers $b_1(\Gamma_i)$ grows sub-linearly as a function of the index $[\Gamma : \Gamma_i]$ whenever $\{\Gamma_i\}_{i\in\bbN}$
  is a decreasing sequence of finite index normal\footnote{The
  conclusion still holds if, for instance, the $\Gamma_i$ are not normal in $\Gamma$ but $\Gamma_i$ is normal in $\Gamma_1$ for all $i$.}
   subgroups in an arithmetically defined group $\Gamma \subset \PGL_2(\bbC)$.
  Recently there has been some progress on improved upper bounds for the growth of 
  Betti numbers, e.g. in \cite{CalegariEmerton2009} and \cite{ClairWhyte2003}.
  Our objective is to deduce \emph{lower bounds} for the growth of the first Betti number.
   
  Our main result concerns a specific class (see below) of compact arithmetically defined hyperbolic $3$-manifolds which
  originate with orders in suitable division quaternion algebras $D$ defined over some number field $E$.
  Given an arithmetic subgroup in the algebraic group $\SL_1(D)$ we show that there are a positive real number $\kappa$
  and a nested sequence $\{\Gamma_i \}_{i \in \bbN}$ of finite index subgroups $\Gamma_i \subset \Gamma$ 
  (whose intersection is the identity) such that the first Betti number of the compact hyperbolic $3$-manifold $\Hyp/\Gamma_i$
  corresponding to $\Gamma_i$ satisfies the inequality
  \begin{equation*}
     b_1(\Gamma_i) \geq \kappa [\Gamma : \Gamma_i]^{1/2}
  \end{equation*}
  for all indices $i \in \bbN$. 
  One obtains a similar result in the case of Bianchi groups, that is, the corresponding manifold is non-compact. In this 
  case one can construct nested sequences such that the first Betti number grows at least as fast as $[\Gamma:\Gamma_i]^{2/3}$ up to a factor.

  In the following subsections, we precisely describe the class of hyperbolic $3$-manifolds in question and
  give an exact formulation of the results obtained.

\subsection{Arithmetically defined hyperbolic $3$-manifolds} 
  For the sake of convenience we begin with the notion of an arithmetically defined Kleinian group.
  A discrete subgroup $\Gamma$ of $\PGL_2(\bbC)$ is said to be arithmetically defined if there exists an algebraic number field $E/\bbQ$
  with exactly one complex place $w$, an arbitrary (but possibly empty) set $T$ of real places, an $E$-form $G$ of 
  the algebraic $E$-group $\PGL_{2}/E$ such that $G(E_v)$ is a compact group for all $v \in T$ and an isomorphism
  $\PGL_2(\bbC) \longrightarrow G(E_w)$, which maps $\Gamma$ onto an arithmetic subgroup of the
  group $G(E)$ of $E$-points naturally embedded into $G(E_w)$.
  The corresponding hyperbolic $3$-manifolds $\Hyp/\Gamma$ fall naturally into two classes, according to whether $\Hyp/\Gamma$
  is compact or not\footnote{However, this quotient always has finite volume.}. 

  In the latter case, $E/\bbQ$ is an imaginary quadratic extension, the group $G$ is the split form $\PGL_2/E$ itself,
  and the set $T$ is the empty set. The groups in question are the subgroups of $\PGL_2(E)$ which are commensurable 
  with the group $\PGL_2(\cO_E)$ where $\cO_E$ denotes the ring of integers in $E$.
  These are the groups already considered by L. Bianchi in 1892. 

  In the former case, given the algebraic number field $E$ with exactly one complex place,
  we consider an $E$-form $G$ of $\PGL_2/E$ originating 
  from a quaternion division algebra $D$ over $E$ which ramifies at least at all
  real places  $v \in T$. Given an order $\Lambda$ in $D$ any torsion-free subgroup $\Gamma$ in the group $\SL_1(D)$ of elements
  of reduced norm one in $D$, which is commensurable 
  with $\SL_1(\Lambda)$ gives rise to a compact $3$-manifold $\Hyp/\Gamma$.

  A torsion-free discrete subgroup in $\SL_2(\bbC)$ projects isomorphically to a torsion-free discrete group in $\PGL_2(\bbC)$. 
  Therefore we shall only consider arithmetically defined groups in inner forms of $\SL_2/E$.

\subsection{The main result}
  We are mainly concerned with arithmetically defined hyperbolic $3$-manifolds and corresponding Kleinian groups which originate
  with orders in division quaternion algebras defined over some algebraic number field $E$.
  Before we state our main result, we give a description of the class of quaternion algebras to which the main theorem applies.
  We suppose that the field $E$, has exactly one complex place and an arbitrary (possibly empty) set $T$ of real places.
  Moreover, we assume that $E$ contains a subfield $F$ such that the degree of the extension $E/F$ is two. 
  Then $F$ is a totally real extension field of $\bbQ$. Let $\sigma$ denote the non-trivial element
  in the cyclic Galois group $\Gal(E/F)$ of the extension $E/F$.

  Let $D$ denote a quaternion division algebra over $E$ such that the finite set of places ramified in $D$ contains the set $T$ of real
  places of $E$.
  As a quaternion division algebra, $D$ is isomorphic to its opposite algebra, and the class
  of $D$ in the Brauer group $\Br(E)$ of $E$ is of order two.
  Thus, the norm $\N_{E/F}(D)$, a central simple algebra of degree $4$ over $F$,
  has order $1$ or $2$ viewed as an element in the Brauer group $\Br(F)$.
  Recall that the unit element in the Brauer group is the class of $F$ or, equivalently, the class of all matrix algebras over $F$. 

  We distinguish the two cases 
  \begin{itemize}
  \item[(I)]  The class $[\N_{E/F} (D)]$ has order $1$ in $\Br(F)$,
  \item[(II)] The class $[\N_{E/F} (D)]$ has order $2$ in $\Br(F)$.
  \end{itemize}

  In case (I), the $F$-algebra $\N_{E/F} (D)$ is
  isomorphic to the matrix algebra $M_4(F)$, that is, it splits. By a result of Albert und
  Riehm (cf. \cite[(3.1)]{BOInv}), $\N_{E/F} (D)$ splits if and only if there is an involution of the second
  kind on $D$ which fixes $F$ elementwise. Let $\tau$ denote this involution of the second kind.
  By definition of this notion, the restriction of $\tau$ to the center of $D$ is of order $2$, hence
  $\tau_{\vert E}$ coincides with $\sigma$. As Albert has proved (cf. \cite[(2.22)]{BOInv}), an involution of the second
  kind on a quaternion algebra has a particular type. There exists a unique quaternion
  $F$-subalgebra $D_0 \subset D$ such that $D = D_{0} \otimes_{F} E$ and $\tau$ is of the form $\tau  = \gamma_{0} \otimes \sigma$ where
  $\gamma_0$ is the canonical involution (also called quaternion conjugation) on $D_0$. The algebra
  $D_0$ is uniquely determined by these conditions. 

  We will consider the involution $\id_{D_0} \otimes \sigma$ on $D = D_{0} \otimes_{F} E$ induced by the 
  non-trivial Galois automorphism $\sigma$ of the extension $E/F$. 
  For the sake of simplicity it will  be denoted by the same letter $\sigma$.

 In case (II),  the $F$-algebra $N_{E/F} (D)$ of degree $4$ is (up to isomorphism) of the form $M_2(Q)$,
 where $Q$ is a quaternion division algebra over $F$.

\begin{theorem*} 
  Let $F$ be a totally real algebraic number field, and let $E$ be a quadratic extension field of $F$ so that $E$ has
  exactly one complex place. Let $\Gamma$ be an arithmetic subgroup in the algebraic group $\SL_1(D)$ where $D$
  is a quaternion division algebra over $E$ which belongs to case (I). Then there are a positive number $\kappa > 0$
  and a nested sequence $\{\Gamma_i \}_{i \in \bbN}$ of torsion-free, finite index subgroups $\Gamma_i \subset \Gamma$ 
  (whose intersection is the identity) such that the first Betti number of the compact hyperbolic
   $3$-manifold $\Hyp/\Gamma_i$ corresponding to $\Gamma_i$ satisfies the inequality
   \begin{equation*}
        b_1(\Gamma_i) \geq \kappa [\Gamma : \Gamma_i]^{1/2}
   \end{equation*}
   for all indices $i \in \bbN$.
   Further, $\Gamma_i$ is normal in $\Gamma_1$ for all $i\in \bbN$.
\end{theorem*}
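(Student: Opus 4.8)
The plan is to let the Galois involution $\sigma = \mathrm{id}_{D_0}\otimes\sigma$ act on a whole tower of manifolds $\Hyp/\Gamma_i$ and to extract the lower bound for $b_1$ from the Lefschetz number
\[ L(\sigma,\Gamma_i) = \sum_{j}(-1)^j \tr\big(\sigma^* \mid H^j(\Gamma_i,\bbC)\big). \]
First I would construct the sequence. Fix a $\sigma$-stable order $\Lambda = \Lambda_0\otimes_{\co_F}\co_E$, with $\Lambda_0$ an order in $D_0$, and take $\Gamma_i$ to be the principal congruence subgroups of $\Gamma$ attached to a descending chain of $\sigma$-invariant ideals $\Ia_i\subset\co_E$ with $\bigcap_i\Ia_i=0$. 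These groups are normal in $\Gamma$ (hence in $\Gamma_1$), nested, have trivial intersection, and are $\sigma$-stable; choosing $\Ia_1$ deep enough makes every $\Gamma_i$ torsion-free. Since each $\Gamma_i$ is $\sigma$-invariant, $\sigma$ descends to an isometric involution of the closed orientable hyperbolic $3$-manifold $M_i = \Hyp/\Gamma_i$.

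Second, I would convert the problem into a lower bound for $|L(\sigma,\Gamma_i)|$. On $M_i$ one has $H^0=H^3=\bbC$ and $\dim H^1 = \dim H^2 = b_1(\Gamma_i)$. As the fixed locus of $\sigma$ will be a union of totally geodesic surfaces, $\sigma$ reverses orientation and acts on $H^3$ by $-1$; Poincar\'e duality then forces $\tr(\sigma\mid H^2) = -\tr(\sigma\mid H^1)$, so that $\tr(\sigma\mid H^1)=1 - \tfrac12 L(\sigma,\Gamma_i)$. Because $\sigma^2=\mathrm{id}$ all eigenvalues are $\pm1$, whence $b_1(\Gamma_i)\ge |\tr(\sigma\mid H^1)| \ge \tfrac12|L(\sigma,\Gamma_i)|$. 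It thus suffices to produce a lower bound $|L(\sigma,\Gamma_i)|\ge \kappa' [\Gamma:\Gamma_i]^{1/2}$.

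Third — the arithmetic heart — I would compute $L(\sigma,\Gamma_i)$ by a fixed-point formula. Geometrically the fixed locus of $\sigma$ on $M_i$ is a disjoint union of closed totally geodesic surfaces $\mathfrak{H}^2/\Delta$, the surface groups $\Delta$ being congruence subgroups of the arithmetic Fuchsian group attached to the $F$-group $\SL_1(D_0)$; here one uses that $D_0$ splits at the real place $v_0$ of $F$ lying under the complex place $w$, so that $\SL_1(D_0)(F_{v_0})\cong\SL_2(\bbR)$ acts on $\mathfrak{H}^2\subset\Hyp$. For an involution the normal action is $-\mathrm{id}$, so each component contributes its Euler characteristic and $L(\sigma,\Gamma_i)=\chi(\mathrm{Fix}\,\sigma)$; in arithmetic terms this is Rohlfs' formula $L(\sigma,\Gamma_i)=\sum_{\xi\in H^1(\langle\sigma\rangle,\Gamma_i)}\chi\big(\mathfrak{H}^2/\Delta_\xi\big)$, the sum ranging over the twisted $\sigma$-conjugacy classes that index the components. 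Since every component is a hyperbolic surface, each $\chi\big(\mathfrak{H}^2/\Delta_\xi\big)<0$: the terms share a common sign, there is no cancellation, and $|L(\sigma,\Gamma_i)| = \tfrac{1}{2\pi}\,\mathrm{Area}(\mathrm{Fix}\,\sigma)$ by Gauss--Bonnet.

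Finally, I would estimate this total area. By the covolume formula of Prasad one has $[\Gamma:\Gamma_i]\asymp\vol(M_i)$, governed by local factors $|\SL_2(\co_E/\Ip^{\,e})|\sim q_{\Ip}^{3e}$ over primes $\Ip$ of $E$, whereas $\mathrm{Area}(\mathrm{Fix}\,\sigma)$ is governed by the analogous factors for $\SL_1(D_0)$ over the primes $\Iq=\Ip\cap\co_F$ of $F$. Choosing the ideals $\Ia_i$ supported at primes of $F$ that remain inert in $E$ makes each $E$-local factor the square of the corresponding $F$-local factor, so that $\mathrm{Area}(\mathrm{Fix}\,\sigma)\asymp[\Gamma:\Gamma_i]^{1/2}$, and a uniform estimate $\ge\kappa'[\Gamma:\Gamma_i]^{1/2}$ yields $b_1(\Gamma_i)\ge\kappa[\Gamma:\Gamma_i]^{1/2}$. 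The hard part will be exactly this last step carried out with a \emph{uniform positive constant}: one must (i) verify that $\mathrm{Fix}\,\sigma$ is nonempty with two-dimensional components for every $i$, controlling the index set $H^1(\langle\sigma\rangle,\Gamma_i)$ and the sign of each normal contribution, and (ii) make the comparison of the covolumes of $\SL_1(D)/E$ and $\SL_1(D_0)/F$ precise enough — including the behaviour at the ramified and archimedean places and the class-number factors entering the orbit count — to secure the exponent $1/2$ from below, and not merely an $O\big([\Gamma:\Gamma_i]^{1/2}\big)$ bound from above.
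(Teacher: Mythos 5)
Your proposal follows the same strategy as the paper — the Lefschetz number of the Galois involution, Rohlfs' fixed-point decomposition, Euler characteristics of the components via volume computations, and an index comparison to extract the exponent $1/2$ — but three of your steps, as set up, would fail. The first is the construction of the tower. For an \emph{arbitrary} arithmetic subgroup $\Gamma \subset \SL_1(D)$ neither $\Gamma$ nor its congruence subgroups need be $\sigma$-stable (take $\Gamma = g\SL_1(\Lambda)g^{-1}$ where $g^{-1}\up{\sigma}g$ does not normalize $\SL_1(\Lambda)$); then $\sigma$ does not descend to $\Hyp/\Gamma_i$ and your Lefschetz number is not even defined, and for the same reason your claim that the $\Gamma_i$ are normal in $\Gamma$ is unjustified. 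The paper avoids this entirely: the Lefschetz argument is run only on the manifestly $\sigma$-stable groups $\Gamma(\Ia_0) = \ker\bigl(G(\cO_F) \to G(\cO_F/\Ia_0)\bigr)$, with $\Ia_0$ an ideal of $\cO_F$ (not of $\cO_E$), and an arbitrary $\Gamma$ is handled only at the very end (Corollary \ref{corDecSeq} and the Main Theorem) by setting $\Gamma_i := \Gamma \cap \Gamma(\Ia_i)$, using that restriction $H^1(\Gamma(\Ia_i),\bbC) \to H^1(\Gamma_i,\bbC)$ is injective for finite-index subgroups and that $[\Gamma:\Gamma_i] \le \ell\,[\Gamma(1):\Gamma(\Ia_i)]$ with $\ell = [\Gamma:\Gamma\cap\Gamma(1)]$. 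This is also precisely why the theorem asserts normality only in $\Gamma_1$.

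The second problem is your description of the fixed locus: case (I) does \emph{not} imply that $D_0$ splits at $v_0$, since the complex place $w_0$ splits $D$ regardless of the behaviour of $D_0$ at $v_0$ ($D_0 \otimes_F \bbC \cong M_2(\bbC)$). If $D_0$ is ramified at $v_0$ (the case $s=0$ of the paper), then $G_{0,\infty} \cong \SL_1(\bbH)^d$ is compact, the fixed locus is a finite set of \emph{points}, each contributing $+1$, and your assertions that every component is a hyperbolic surface with $\chi < 0$ and that $|\calL| = \tfrac{1}{2\pi}\mathrm{Area}$ are false. What survives, and what the paper actually proves (Remark \ref{remFixedPointCompOfSameType}, via the sign of Serre's Euler--Poincar\'e measure, and Theorem \ref{ThmEulerCharacteristic}), is that all components have Euler characteristic of the same sign $(-1)^s$ — that is the correct cancellation-free statement in both cases. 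Third, the quantitative core you defer is the actual content of the paper, and your proposed device is backwards: it is for primes \emph{split} in $E$ that the local factor of $[K:K(\Ia_0)]$ is exactly the square of that of $[K_0:K_0(\Ia_0)]$, while for inert primes the ratio is only approximately $1$. No restriction on the support of $\Ia_0$ is needed: Corollary \ref{corEstimate} gives the uniform bound $[K_0:K_0(\Ia_0)] \ge \zeta_F(2)^{-1}\sqrt{[K:K(\Ia_0)]}$ for all ideals, and this, combined with the Lefschetz formula of Theorem \ref{thmLefschetzNumber} (whose proof in turn requires the local non-abelian cohomology computations of the appendix to control the number of components, via Corollary \ref{corSizeOfH1}, and their levels), yields the uniform constant. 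A last small slip: Poincar\'e duality gives $b_1 \ge \tfrac12|\calL| - 1$, not $\tfrac12|\calL|$; the correction term is harmless since $|\calL| \to \infty$, but it must be carried along as in Theorem \ref{thmH1Estimate} and Corollary \ref{corEstimateH1Index}.
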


  The proof of this result relies on the following methodological approach:
  The non-trivial Galois automorphism $\sigma$ of the extension $E/F$ induces an orientation-reversing
  involution on the hyperbolic $3$-manifold $\Hyp/\Gamma$, whenever $\Gamma$ is $\sigma$-stable. 
  In the case the extension $E/F$ is unramified over $2$ one can determine the Lefschetz number
  $\calL(\sigma, \Gamma)$ of the induced homomorphism in the cohomology of $\Hyp/\Gamma$ where 
  $\Gamma$ is a suitable congruence subgroup in $\SL_1(D)$. 
  In the general case, one gets the analogous value as a lower bound for $\calL(\sigma, \Gamma)$.
  This bound is given up to sign and some power of two as 
   \begin{equation*}
       \pi^{-2d} \zeta_{F}(2)  |\disc_{F}|^{3/2} \Delta (D_0) \times  [K_0 : K_0(\mathfrak{a})],
   \end{equation*}
  where $\zeta_{F}(2)$ denotes the value of the zeta-function of $F$ at $2$,  $|\disc_{F}|$ denotes
  the absolute value of the discriminant of $F$,  $[K_0 : K_0(\mathfrak{a})]$ denotes a global index attached
  to the congruence subgroup of level $\Ia \subseteq \cO_F$,
  and $ \Delta (D_0) =  \prod_{\Ip_0 \in \Ram_{f}(D_0)} (\N_{F/\bbQ}(\Ip_0) - 1)$ depends
  on the set of finite places of $F$ in which the quaternion division algebra $D_0$ ramifies.
  In turn, this bound can be used to give a lower bound for the first Betti number of the hyperbolic $3$-manifold in 
  question  (see Theorem \ref{thmH1Estimate} and Corollary \ref{corEstimateH1Index}). This result implies that the first Betti number becomes arbitrarily large 
  when we vary over the congruence condition since the term $[K_0 : K_0(\Ia)]$ is unbounded.

\subsection{Outline}  
  We outline the content of the paper:
  In Section \ref{sectQuatAlg}, we give some background material pertaining
  to quaternion algebras $D$ defined over number fields and the corresponding algebraic groups $\SL_1(D)$ of reduced norm one elements.
  In this and the subsequent section we work in the general case of an arbitrary quadratic extension $E/F$ of a totally real number field $F$.
  In Section \ref{sectLefschetz}, we first outline the approach on which our result is based.
  The Lefschetz number of the orientation-reversing automorphism $\sigma$ of the manifold $\Hyp/\Gamma$ 
  is equal to the Euler characteristic of the space $(\Hyp/\Gamma)^{\sigma}$ of points in $\Hyp/\Gamma$ fixed under $\sigma$.
  The latter space and its connected components, interpreted in the language of adele groups, 
  can be described in terms of non-abelian Galois cohomology, following a general approach of Rohlfs (cf. \cite{Rohlfs1990}).
  The Euler characteristics in question can be calculated via an Euler-Poincar\'e measure. We compare this measure with
  the Tamagawa measure, which allows us to determine the Euler characteristic as an infinite product of local factors indexed
  by the finite places of the underlying field. Theorem \ref{thmLefschetzNumber} gives then the final result for the Lefschetz 
  number attached to $\sigma$ and a congruence subgroup in $\SL_1(D)$. 
  Section \ref{sectEstimates} contains some estimates for ratios of subgroup indices which occur by passing
  from congruence subgroups over $F$ to such over $E$.
  Finally, in Section \ref{sectHyperbolicApplications}, we apply the previous result in the case of arithmetically
  defined hyperbolic $3$-manifolds and we obtain the main result as indicated above.
  Some comments on how to obtain a similar result for Bianchi groups can be found in Section \ref{sectBianchi}.
  Moreover, there is an appendix in Section \ref{secAppendixLocal} where we stored some auxiliary, purely local results pertaining 
  to non-abelian Galois cohomology.

\specialsection*{Notation}

  We write $\bbZ$, $\bbQ$, $\bbR$, and $\bbC$ for the ring of integers, the field of rational numbers, 
  the field of real numbers,  
  and the field of complex numbers respectively.

  \subsection*{} (1) Let $K$ be an algebraic number field, i.e. a finite extension of the field $\bbQ$.
  The ring of algebraic integers of $K$ is denoted by $\cO_K$.
  Let $V(K)$ denote the set of places of $K$. The subsets of archimedean (resp. non-archimedean) places
  will be denoted $V_\infty(K)$ (resp. $V_f(K)$). Given a place $v \in V(K)$ the 
  completion of $K$ with respect to $v$ is denoted $K_v$. 
  For a finite place $v \in V_f(K)$ we write $\cO_{K,v}$ for the valuation ring in $K_{v}$.
  The symbol $\bbA_K$ denotes the ring of adeles of $K$. We use the notation $\bbA_{K,f}$ 
  for the ring finite adeles of $K$.

  \subsection*{} (2) All group schemes considered are affine and of finite type. Let $k$ be a commutative ring and
     $H$ a group scheme over $k$. Given any commutative $k$-algebra $R$, we write $H(R)$ for the group of $R$-rational
     points of $H$.

   \subsection*{} (3) We freely use the language of non-abelian Galois cohomology as defined by Serre \cite[I.§5]{Serre1964}.
   Whenever $H$ is a group on which the two element group acts by an automorphism called $\sigma$,
   we will denote the action by upper left exponents, i.e. $\up{\sigma}h$.
   Recall that a 1-cocycle for $\sigma$ with values in $H$ is an element $h \in H$ such that $h\up{\sigma}h = 1$.
   The set of such 1-cocycles will be denoted by $Z^1(\sigma, H)$. Two cocycles $h,g \in H$ are said to be equivalent,
   if there is some $b \in H$ such that $b^{-1}h\up{\sigma}b = g$.
   The first non-abelian cohomology set $H^1(\sigma,H)$ of $\sigma$ with values in $H$ is the set of equivalence classes for this relation.
   In general $H^1(\sigma, H)$ is not a group, but it is a pointed set where the class of the trivial cocycle $1_H$ is the distinguished point.

\section{Quaternion algebras and associated algebraic groups}\label{sectQuatAlg}

  \subsection{} 
   Throughout the article $F$ denotes a totally real algebraic number field and $E/F$ a quadratic extension of $F$.
   In section \ref{sectLefschetz} we impose no assumptions on $E$. 
   However, in section \ref{sectHyperbolicApplications} the field $E$ will be assumed to have precisely one complex place.
   We tried to consistently denote ideals in $\cO_F$ by fraktur letters indexed by zero (e.g. $\Ia_0$) whereas ideals in
   $\cO_E$ will have no subscript.
   Moreover, let $D_0$ be a quaternion algebra defined over $F$. Taking the tensor product with $E$, we obtain the
   quaternion algebra $D := D_0 \otimes_F E$ over $E$.
   We fix once and for all a maximal $\cO_F$-order $\Lambda_0$ in $D_0$. Further, we obtain an $\cO_E$-order
   $\Lambda := \Lambda_0 \otimes_{\cO_F} \cO_E$ in $D$. Surprisingly, this is in general not a maximal order in $D$ and it is valuable
   to keep that in mind.
 
  The finite set of places in $V(F)$ ramified in $D_0$ will be denoted by $\Ram(D_0)$. 
  As before, we write $\Ram_f(D_0)$ (resp. $\Ram_\infty(D_0)$) for the finite (resp. infinite) places in $\Ram(D_0)$.
  We write $r = |\Ram_\infty(D_0)|$ for the number of real ramified places and $s = [F:\bbQ]-r$ for the number of split places.

  \subsection{}\label{subsecGroupSchemes} With the given data several group schemes are associated. We write $\GL_1(\Lambda_0)$ for
   the $\cO_F$ group scheme of units associated with $\Lambda_0$. This means for a commutative $\cO_F$-algebra 
   $R$ we have $\GL_1(\Lambda_0)(R) := (\Lambda_0 \otimes_{\cO_F} R)^{\times}$.
   The reduced norm gives a morphism of group schemes
    \begin{equation*}
        \nrd: \GL_1(\Lambda_0) \to \Gm
    \end{equation*}
     into the multiplicative group defined over $\cO_F$. The kernel of the reduced norm 
     is a smooth $\cO_F$ group scheme denoted by $G_0 := \SL_1(\Lambda_0)$.
     Note that, taking the base change to $\cO_E$, we get the group $SL_1(\Lambda) = G_0 \times_{\cO_F} \cO_E$.
     Finally, we apply the (Weil) restriction of scalars and obtain another $\cO_F$ group scheme 
     \begin{equation*}
            G := \Res_{\cO_E / \cO_F} ( G_0 \times_{\cO_F} \cO_E ).
     \end{equation*}
     Moreover, the scheme $G$ is smooth over $\cO_F$.

   \subsection{} Let $\sigma$ denote the generator of the Galois group $\Gal(E/F)$. The Galois automorphism $\sigma$ induces
    an $F$-algebra automorphism $\id_{D_0} \otimes \sigma : D \to D$.
    For simplicity we will denote this morphism again by $\sigma$. Moreover, $\sigma$ induces an automorphism of order two on $G$. 
    We will still write $\sigma$ for this automorphism. One should notice that $\sigma: G \to G$
    is defined over $\cO_F$.
    Note that the group $G^{\sigma}\times_{\cO_F} F$  of $\sigma$-fixed points (over $F$) is canonically isomorphic to $G_0\times_{\cO_F}F$.
    In general the groups $G^\sigma$ and $G_0$ are not isomorphic over $\cO_F$.

   \subsection{} Define the real Lie group
    \begin{equation*}
        G_{\infty} := \prod_{v \in V_\infty(F)} G(F_v) \:=\: \prod_{v\in V_\infty(E)} G_0(E_v),
    \end{equation*}
     which we call the Lie group attached to $G$. Moreover, we fix a $\sigma$-stable maximal compact subgroup $K_\infty \subseteq G_\infty$. 
     Analogously, we define the Lie group $G_{0,\infty}$ attached to $G_0$.
     We obtain
       \begin{equation*}
        G_{0,\infty} \cong  \SL_2(\bbR)^s \times \SL_1(\bbH)^r,
      \end{equation*}
     where $s$ denotes the number of real places of $F$ where $D_0$ splits and $r$ denotes the number of real places ramified in $D_0$.
     The symbol $\bbH$ denotes Hamilton's division quaternion algebra over $\bbR$.
     
    Furthermore, we put $K_0 := \prod_{v \in V_f(F)} G_0(\cO_{F,v})$, which is an open compact subgroup of the locally compact group $G_0(\bbA_{F,f})$.
    Similarly, the group $K := \prod_{v \in V_f(F)} G(\cO_{F,v})$ is open and compact in $G(\bbA_{F,f})$.
     
   \subsection{Congruence subgroups}
    Let $\Ia_0 \subseteq \cO_F$ be a non-zero ideal.
    Let $v \in V_f(F)$ be a finite place. We obtain an open compact subgroup $K_{0,v}(\Ia_0)$ in $G_0(F_v)$
    defined by 
    \begin{equation*}
         K_{0,v}(\Ia_0) = \ker\bigl(G_0(\cO_{F,v}) \to G_0(\cO_{F,v}/\Ia_0\cO_{F,v})\bigr).
    \end{equation*}
    We also define
     \begin{equation*}
         K_{v}(\Ia_0) = \ker\bigl(G(\cO_{F,v}) \to G(\cO_{F,v}/\Ia_0\cO_{F,v})\bigr),
    \end{equation*}
    which is an open compact subgroup of $G(F_v)$. Putting this together we obtain the groups
    $K_0(\Ia_0) = \prod_{v \in V_f(F)} K_{0,v}(\Ia_0)$ and $K(\Ia_0) = \prod_{v \in V_f(F)} K_{v}(\Ia_0)$ which are open compact in $G_0(\bbA_{F,f})$ and
   $G(\bbA_{F,f})$ respectively.

\section{Lefschetz number of the Galois automorphism}\label{sectLefschetz}

\subsection{} In this section we will assume that the group scheme $G$ has strong approximation (cf. \cite[Thm. 4.3]{Vigneras1980}). This is the case precisely when there is at least one
  archimedean place $v \in V_\infty(E)$ of $E$ which splits the quaternion algebra $D$. Clearly, this always holds if $E$ has a complex place.

\subsection{}
  We fix a $\sigma$-stable maximal compact subgroup $K_\infty \subseteq G_\infty$. 
  The associated symmetric space 
 \begin{equation*}
            X = \bsl{K_\infty}{G_\infty}
  \end{equation*}
 is equipped naturally with an automorphism induced by $\sigma$.
  Let $\Gamma \subseteq G(F)$ be a torsion-free arithmetic subgroup. Such a group $\Gamma$ acts properly and freely on $X$ from 
  the right. The group cohomology
  $H^*(\Gamma, \bbC)$ is isomorphic to the cohomology $H^*(X/\Gamma, \bbC)$ of the locally symmetric space $X/\Gamma$.

   Assume further that $\Gamma$ is $\sigma$-stable, then $\sigma$ also induces an automorphism, again denoted by $\sigma$, of order two 
   on the space $X/\Gamma$.
   This automorphism induces maps in the cohomology $\sigma_q: H^q(X/\Gamma, \bbC) \to H^q(X/\Gamma, \bbC)$ in every degree $q$.
   We define the Lefschetz number of $\sigma$ as
   \begin{equation*}
       \calL(\sigma, \Gamma) \: := \: \sum_{q = 0}^\infty (-1)^q \tr(\sigma_q).
   \end{equation*}
   Since torsion-free arithmetic groups are of type (FL), this is a finite sum (of integers).

   We will apply a method developed by J. Rohlfs to compute such Lefschetz numbers (cf. \cite{Rohlfs1978}, \cite{Rohlfs1981}).
   The key observation is that the Lefschetz number of $\sigma$ equals the Euler characteristic of the space $(X/\Gamma)^\sigma$
   of $\sigma$-fixed points. Further, Rohlfs gave a precise description of the set of fixed points in terms of non-abelian
   Galois cohomology. We describe this fixed point decomposition in the adelic setting (as introduced in \cite{Rohlfs1990}).
   
 \subsection{} Let $\Ia_0 \subset \cO_F$ be a non-trivial proper ideal. We define the (principal) congruence subgroup of level $\Ia_0$
    in $G$ as
   \begin{equation*}
        \Gamma(\Ia_0) := \ker\bigl( G(\cO_F) \:\to\: G(\cO_F/\Ia_0) \bigr). 
   \end{equation*}
   Similarly, we define $\Gamma_0(\Ia_0) := \ker\bigl( G_0(\cO_F) \:\to\: G_0(\cO_F/\Ia_0) \bigr)$.
   We shall always assume that $\Ia_0$ was chosen sufficiently small such that these groups are torsion-free.
   This is the case, for instance, if $\Ia_0 \cap \bbZ$ is not a prime ideal of $\bbZ$. One should also notice that
   $\Gamma(\Ia_0) = G(F) \cap K(\Ia_0)$ and $\Gamma_0(\Ia_0) = G_0(F) \cap K_0(\Ia_0)$.
   We define $S(\Ia_0)$ to be the double quotient space
    \begin{equation*}
          S(\Ia_0) \: := \: \bsl{K_\infty K(\Ia_0)}{G(\bbA_F)}/G(F).
    \end{equation*}
    Using strong approximation we obtain a canonical homeomorphism
    \begin{equation*}
         X/\Gamma(\Ia_0) \: \stackrel{\simeq}{\longrightarrow} \: S(\Ia_0).
    \end{equation*}
    Note that $G(F)$ acts freely on the quotient space
    $\bsl{K_\infty K(\Ia_0)}{G(\bbA_F)}$ precisely when $\Gamma(\Ia_0)$ is torsion-free.
  
\subsection{Decomposition of the fixed point space}\label{sectFPDecomposition}
  We study the set $S(\Ia_0)^\sigma$ of $\sigma$-fixed points in the locally symmetric space $S(\Ia_0)$
  with the method of Rohlfs (see \cite{Rohlfs1990}).
  Suppose we are given an element $a \in G(\bbA_F)$ representing a $\sigma$-fixed double coset in $S(\Ia_0)$. 
  This means there are $k \in K_\infty K(\Ia_0)$ and $\gamma \in G(F)$ such that 
  \begin{equation}\label{eqGaloisRelationAdelic}
      \up{\sigma}a = k^{-1} a \gamma.
  \end{equation}
  The elements $k$ and $\gamma$ are uniquely determined by $a$ since $G(F)$ acts freely on $\bsl{K_\infty K(\Ia_0)}{G(\bbA_F)}$.
  Moreover, from $\up{\sigma\sigma}a = a$ one deduces the identities $k \up{\sigma}k = 1$ and  
  $\gamma \up{\sigma}\gamma = 1$. In other words, $k$ (resp. $\gamma$) defines
  a 1-cocycle in $Z^1(\sigma,K_\infty K(\Ia_0))$ (resp. $Z^1(\sigma, G(F))$). If one replaces $a$ by another
  representative $a'$ it is easily seen that the resulting cocycles are equivalent. 
  Consequently, a $\sigma$-fixed point in $S(\Ia_0)$ determines uniquely two cohomology classes: one in $H^1(\sigma, K_\infty K(\Ia_0))$
  and one in $H^1(\sigma, G(F))$. Moreover, equation \eqref{eqGaloisRelationAdelic} implies that these classes coincide, when
  they are mapped to $H^1(\sigma,G(\bbA_F))$ via the canonical maps induced by the respective embeddings.
  We define
  \begin{equation*}
        \calH^1(\Ia_0) := \: H^1(\sigma, K_\infty K(\Ia_0)) \mathop{\times}_{H^1(\sigma, G(\bbA_F))} H^1(\sigma, G(F))
  \end{equation*}
  as the fibred product of these two cohomology sets. One can show that this is in general a finite set. To see this, one 
  defines a surjective map $\alpha: H^1(\sigma, \Gamma(\Ia_0)) \to \calH^1(\Ia_0)$ and uses that
  the first set is finite due to a result of Borel and Serre (cf. Prop. 3.8 in \cite{BorelSerre1964}). 
  However, we will determine the set $\calH^1(\Ia_0)$ explicitly in \ref{sectH1}, thus we will not need this kind of general result.
  Summing up, we found a surjective map
   \begin{equation*}
        \vartheta: S(\Ia_0)^\sigma  \to \calH^1(\Ia_0).
   \end{equation*}
   Moreover, if we give the discrete topology on the finite set $\calH^1(\Ia_0)$, then this map is continuous.
   This means its fibres are open and closed in $S(\Ia_0)^\sigma$. 
   Hence the result is a topologically disjoint decomposition of the fixed point set 
   \begin{equation}\label{eqFixedPointDecomposition}
           S(\Ia_0)^\sigma \: = \: \bigsqcup_{\eta \in \calH^1(\Ia_0)} \vartheta^{-1}(\eta).
   \end{equation}

\subsection{Structure of fixed point components}\label{sectStructureFixedPoints}
  Rohlfs also gave a description of the fibres occuring in \eqref{eqFixedPointDecomposition} (cf. \cite{Rohlfs1990}).
  To describe them we need some more notation. Let $\gamma \in Z^1(\sigma,G(F))$ be
  a cocycle. By twisting $\sigma$ with $\gamma$ we obtain another automorphism $\sigma|\gamma$ on $G\times_{\cO_F}F$
  defined by $\sigma|\gamma := \inn(\gamma) \circ \sigma$. Here $\inn(\gamma)$ denotes the inner automorphism defined
  by $\gamma$. The group of $\sigma|\gamma$ fixed points is algebraic over $F$ and we denote it by $G(\gamma)$.
  Clearly, if $\gamma \in Z^1(\sigma, G(\cO_F))$, the twisted automorphism is defined over $\cO_F$ and so is $G(\gamma)$.
  Note that $G(1) = G^\sigma$.

  Moreover, if $k \in Z^1(\sigma, K_\infty K(\Ia_0))$ is a cocycle we can again twist the action of $\sigma$ on
   $K_\infty K(\Ia_0)$. The twisted action will be denoted $\sigma|k$ and its group of
  fixed points is written $(K_\infty K(\Ia_0))(k)$.

  Finally, we are able to describe the fibres of $\vartheta$. Let $\eta \in \calH^1(\Ia_0)$ be a class and choose
  representing cocycles $k \in Z^1(\sigma,K_\infty K(\Ia_0))$, $\gamma \in Z^1(\sigma, G(F))$ and 
  some $a \in G(\bbA_F)$ such that \eqref{eqGaloisRelationAdelic} holds.
  In this case there is a homeomorphism 
  \begin{equation*}
      \vartheta^{-1}(\eta) \stackrel{\simeq}{\longrightarrow} \bsl{a^{-1}(K_\infty K(\Ia_0))(k)a}{G(\gamma)(\bbA_F)}/G(\gamma)(F)
  \end{equation*}
  (cf. \cite[3.5]{Rohlfs1990}).   

\subsection{Determining $\calH^1$}\label{sectH1}
The description of the set of $\sigma$-fixed points followed a general pattern. In this subsection
 we start using specific properties of the involved groups.
Our first goal is to determine the set $\calH^1(\Ia_0)$ for a given ideal $\Ia_0 \subseteq \cO_F$.
We moved some of the purely local results we need to the appended Section \ref{secAppendixLocal}, 
since these results have a more technical flavour.

Let $R$ be any commutative $\cO_F$-algebra.
Whenever we write $H^1(\sigma, G(R)) = \{1\}$ we mean that $H^1$ consists of the trivial class only.
Moreover, the element $-1\in G(R)$ is always a cocycle for $\sigma$. We write $H^1(\sigma, G(R)) = \{\pm 1\}$
to express that $H^1$ consists of precisely two classes: the trivial class and a class represented by the cocycle $-1$.   

\begin{lemma}\label{lemmaH1LocalField}
    Let $v \in V_f(F)$ be a finite place, then $H^1(\sigma, G(F_v)) = \{1\}$.
\end{lemma}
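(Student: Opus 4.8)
The plan is to reduce \(H^1(\sigma,G(F_v))\) to two standard local facts: the vanishing of \(H^1\) for the ambient unit group (a form of Hilbert's Theorem~90) and the surjectivity of the reduced norm over a local field. First I would unwind the Weil restriction. Since \(G=\Res_{\cO_E/\cO_F}(G_0\times_{\cO_F}\cO_E)\), evaluating on the \(\cO_F\)-algebra \(F_v\) gives \(G(F_v)=G_0(F_v\otimes_F E)\). Write \(A:=D_0\otimes_F F_v\), a quaternion algebra over the non-archimedean local field \(F_v\), and \(R_v:=F_v\otimes_F E\), a quadratic étale \(F_v\)-algebra on which \(\sigma\) acts \(F_v\)-linearly with fixed ring \(R_v^\sigma=F_v\). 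Then \(G(F_v)=\SL_1(A\otimes_{F_v}R_v)\), with \(\sigma\) acting through \(R_v\) and trivially on \(A\); consequently the \(\sigma\)-fixed points of the ambient unit group are \(((A\otimes_{F_v}R_v)^\times)^\sigma=A^\times\) and \((R_v^\times)^\sigma=F_v^\times\).

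Next I would feed the \(\sigma\)-equivariant short exact sequence
\[
1\longrightarrow \SL_1(A\otimes_{F_v}R_v)\longrightarrow (A\otimes_{F_v}R_v)^\times \xrightarrow{\ \nrd\ } R_v^\times \longrightarrow 1
\]
into the long exact sequence of pointed \(\sigma\)-cohomology sets (Serre, Galois cohomology~I.5), obtaining
\[
A^\times \xrightarrow{\ \nrd\ } F_v^\times \xrightarrow{\ \delta\ } H^1(\sigma,G(F_v)) \longrightarrow H^1\!\left(\sigma,(A\otimes_{F_v}R_v)^\times\right).
\]
By exactness at the middle term, \(H^1(\sigma,G(F_v))\) will be trivial once I establish (a) that \(H^1(\sigma,(A\otimes_{F_v}R_v)^\times)=\{1\}\), which forces \(H^1(\sigma,G(F_v))=\mathrm{im}(\delta)\), and (b) that \(\delta\) is constant at the base point, i.e. that \(\nrd\colon A^\times\to F_v^\times\) is surjective (the fibres of \(\delta\) being the cosets of \(\nrd(A^\times)\) in \(F_v^\times=(R_v^\times)^\sigma\)).

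For (a) I would split into cases according to \(v\). If \(v\) is non-split in \(E\), then \(R_v=E_w\) is the quadratic field extension; since over a local field every quadratic field extension splits a quaternion algebra, \(A\otimes_{F_v}E_w\cong M_2(E_w)\), so the unit group is \(\GL_2(E_w)\) and \(H^1(\Gal(E_w/F_v),\GL_2(E_w))=\{1\}\) is Hilbert~90. If \(v\) splits, then \(R_v\cong F_v\times F_v\) with \(\sigma\) interchanging the factors, so \((A\otimes_{F_v}R_v)^\times\cong A^\times\times A^\times\) is an induced module and its \(H^1\) vanishes by Shapiro's lemma. For (b), surjectivity of the reduced norm onto \(F_v^\times\) holds for any quaternion algebra over a non-archimedean local field: for split \(A\) it is the determinant, and for the division quaternion algebra it is the classical surjectivity of the quaternion norm form over a local field.

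Combining these, \(\delta\) is trivial and \(H^1(\sigma,G(F_v))=\{1\}\). I expect the main obstacle to be bookkeeping rather than depth: keeping the exactness of the nonabelian sequence precise (in particular the description of the fibres of the connecting map \(\delta\) as cosets of \(\nrd(A^\times)\)) and correctly invoking the two local inputs, namely the splitting \(A\otimes_{F_v}E_w\cong M_2(E_w)\) and reduced-norm surjectivity. It is worth noting that the argument is purely local and uses neither the complex place of \(E\) nor any ramification hypothesis on \(D_0\).
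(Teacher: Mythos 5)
Your overall strategy---unwinding the Weil restriction to $G(F_v)=\SL_1(A\otimes_{F_v}R_v)$ with $A=D_0\otimes_F F_v$, feeding the $\sigma$-equivariant sequence $1\to\SL_1\to\GL_1\xrightarrow{\nrd}R_v^\times\to 1$ into the long exact sequence of pointed sets, and concluding from (a) vanishing of $H^1$ of the unit group and (b) surjectivity of $\nrd\colon A^\times\to F_v^\times$---is sound, and it is in substance the paper's own argument: the paper carries out your Shapiro/swap computation by hand in the split case, and in the non-split case its citation of Hilbert's Theorem 90 (Corollary (29.4) of \cite{BOInv}) is exactly this exact-sequence packaging combined with local norm surjectivity.

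There is, however, a genuine gap in your justification of (a) in the non-split case. When $v\in\Ram_f(D_0)$, i.e. $A$ is the division quaternion algebra over $F_v$ (a case that really occurs in the paper's setting), the isomorphism $A\otimes_{F_v}E_w\cong M_2(E_w)$ can \emph{never} be chosen $\sigma$-equivariantly: the fixed subalgebra of $(A\otimes_{F_v}E_w,\,1\otimes\sigma)$ is $A$, while the fixed subalgebra of $M_2(E_w)$ with the entrywise Galois action is $M_2(F_v)$, and $A\not\cong M_2(F_v)$. Since nonabelian $H^1(\sigma,-)$ depends on the group \emph{together with its $\sigma$-action}, not merely on the abstract group, classical Hilbert 90 for $\GL_2(E_w)$ with the standard action says nothing about $H^1(\sigma,(A\otimes_{F_v}E_w)^\times)$; transporting $1\otimes\sigma$ through your isomorphism yields an inner twist of the standard semilinear action, to which that statement does not apply. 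The fact you need is the noncommutative Hilbert 90 for central simple algebras: $H^1(\Gal(E_w/F_v),(A\otimes_{F_v}E_w)^\times)=\{1\}$ for \emph{any} central simple $F_v$-algebra $A$. This is (29.2) in \cite{BOInv} (the reference the paper itself uses), and it follows from Galois descent: a cocycle $b$ yields the right $A$-module $U(b)=\{x : b\up{\sigma}x=x\}$, which is free of rank one since $A$ is a division algebra, and any generator $g$ is a unit satisfying $b=g\up{\sigma}g^{-1}$. With this one substitution your steps (a), (b) and the exactness bookkeeping are all correct and the proof closes; when $A$ is split (so that a $\sigma$-equivariant splitting does exist) your original citation is fine.
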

\begin{proof}
   Note that we have $G(F_v) = G_0(F_v \otimes_{\cO_F} E )$. 
  We distinguish two cases with respect to the splitting behaviour of $v$ in $E$.

   First case: If $v$ splits in $E$, then $G(F_v) \cong G_0(F_v) \times G_0(F_v)$, and $\sigma$ acts by swapping
  the two components. Recall the following Lemma: Let $H$ be any group and denote the automorphism swapping 
  the two components in $H \times H$ by $\sigma$, then $H^1(\sigma, H\times H) =\{1\}$. To see this, one realizes that
   a cocycle in $H\times H$ is a pair $(x, x^{-1})$ with $x \in H$ arbitrary. However, $(x,x^{-1}) = (1, x)^{-1} (x,1)$ 
   is a trivial cocycle.

 Second case: If $v$ is not split, there is precisely one place $w \in V_f(E)$ lying over $v$ and
 \begin{equation*}
   G(F_v) \cong G_0(E_w) = \SL_1(D\otimes_E E_w).
 \end{equation*}
   In this case $\sigma$ acts by the nontrivial Galois automorphism of $E_w / F_v$ and the claim
   follows from Hilbert's Theorem 90 (cf. Corollary (29.4) in \cite[p.393]{BOInv}). 
\end{proof}

\begin{lemma}\label{lemmaH1LocalField2}
   Let $v \in V_\infty(F)$ be an infinite place of $F$. 
   If $v \in \Ram_\infty(D_0)$ and there is a complex $w \in V_\infty(E)$ of $E$ over $v$, then
   $H^1(\sigma, G(F_v)) = \{\pm 1\}$.
   In all other cases $H^1(\sigma, G(F_v)) = \{1\}$.
\end{lemma}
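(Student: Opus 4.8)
The plan is to read off the group $G(F_v)$ together with the $\sigma$-action from the archimedean algebra $E\otimes_F\bbR$, and then to treat the split and the complex cases separately. Since $G=\Res_{\cO_E/\cO_F}(G_0\times_{\cO_F}\cO_E)$, for an infinite place $v$ we have $G(F_v)=G_0(F_v\otimes_F E)=\SL_1\bigl(D_0\otimes_F(E\otimes_F\bbR)\bigr)$, where $\sigma$ acts through its action on the factor $E\otimes_F\bbR$. As $F$ is totally real, $F_v=\bbR$, and there are exactly two possibilities: if there is a complex place $w$ of $E$ above $v$ then $E\otimes_F\bbR\cong\bbC$ with $\sigma$ acting by complex conjugation; if $v$ splits into two real places then $E\otimes_F\bbR\cong\bbR\times\bbR$ with $\sigma$ interchanging the two factors.

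First I would dispose of the split case, which covers every situation in which $v$ has no complex place above it (regardless of whether $v\in\Ram_\infty(D_0)$). Here $D_0\otimes_F(\bbR\times\bbR)\cong(D_0\otimes_F\bbR)\times(D_0\otimes_F\bbR)$, so that $G(F_v)\cong G_0(F_v)\times G_0(F_v)$ and $\sigma$ swaps the two factors. The elementary lemma recalled in the proof of Lemma~\ref{lemmaH1LocalField} then gives $H^1(\sigma,G(F_v))=\{1\}$ at once.

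It remains to handle the complex case, where $G(F_v)=\SL_1(D_0\otimes_F\bbC)\cong\SL_2(\bbC)$ and $\sigma$ is the real structure whose fixed group is the real form $\SL_1(D_0\otimes_F\bbR)$. My approach is to compare $\SL_1$ with $\GL_1$ via the reduced norm. The short exact sequence of $\bbR$-groups
\begin{equation*}
    1 \longrightarrow \SL_1(D_0\otimes_F\bbR) \longrightarrow \GL_1(D_0\otimes_F\bbR) \xrightarrow{\ \nrd\ } \Gm \longrightarrow 1
\end{equation*}
yields a long exact sequence of pointed $\Gal(\bbC/\bbR)$-cohomology sets. Since the $H^1$ of the unit group of a central simple algebra vanishes by the generalized form of Hilbert's Theorem~90 (cf.\ \cite{BOInv}), the connecting map induces a bijection
\begin{equation*}
    H^1(\sigma,G(F_v)) \;\cong\; \bbR^\times\big/\nrd\bigl((D_0\otimes_F\bbR)^\times\bigr).
\end{equation*}
If $v\notin\Ram_\infty(D_0)$ then $D_0\otimes_F\bbR\cong M_2(\bbR)$, the reduced norm is the determinant, which is surjective onto $\bbR^\times$, and we obtain $H^1(\sigma,G(F_v))=\{1\}$. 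If $v\in\Ram_\infty(D_0)$ then $D_0\otimes_F\bbR\cong\bbH$, the reduced norm takes only positive values, with image $\bbR_{>0}$, and hence the quotient $\bbR^\times/\nrd(\bbH^\times)$ has exactly two elements.

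To finish the ramified complex case I must still match the nontrivial class with the cocycle $-1$. Because the quotient has order two, it suffices to check that $-1\in\SL_2(\bbC)$ is a nontrivial cocycle, i.e.\ that there is no $b$ with $\up{\sigma}b=-b$; writing $\sigma$ explicitly as the quaternionic structure $b\mapsto J\bar{b}J^{-1}$, this condition forces $\det b=-|a|^2-|c|^2$, which can never equal $1$, a one-line verification. The main obstacle is thus concentrated in this last case: one must be sure that the generalized Hilbert~90 input is legitimate and that the single nontrivial class is represented precisely by $-1$ and not by some other cocycle. Once the reduced-norm bijection is in place, the remaining checks are routine.
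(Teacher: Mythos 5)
Your proof is correct and follows essentially the same route as the paper: the swapping lemma for the split case, and in the complex case the long exact cohomology sequence arising from $1 \to \SL_1 \to \GL_1 \xrightarrow{\nrd} \Gm \to 1$ together with generalized Hilbert 90, reducing everything to the image of the reduced norm $\nrd\colon (D_0\otimes_F\bbR)^\times \to \bbR^\times$. The only difference is that you carry out explicitly (via the quaternionic structure $b \mapsto J\bar{b}J^{-1}$ and the determinant computation) the verification that $1$ and $-1$ are inequivalent, which the paper merely asserts is easy to check.
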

\begin{proof}
   Suppose there are two real places of $E$ over $v$. Then, as in \ref{lemmaH1LocalField}, we have an isomorphism
   $G(F_v) \cong G_0(F_v) \times G_0(F_v)$ where $\sigma$ acts by swapping the two components and
   the claim follows directly.

   Suppose now that there is a complex place $w \in V_\infty(E)$ lying over $v$. 
   By (29.2) in \cite[p.392]{BOInv} we have $H^1(\sigma, \GL_1(D_0\otimes_F E_w)) = \{1\}$ and 
   we get a short exact sequence
   \begin{equation*}
       1 \longrightarrow G(F_v) \longrightarrow \GL_1(D_0\otimes_F E_w) \stackrel{\nrd}{\longrightarrow} \bbC^\times \longrightarrow 1.
   \end{equation*}
   Consider the induced long exact sequence of pointed sets (cf. (28.3) in \cite{BOInv})
   \begin{equation*}
         1 \longrightarrow G_0(F_v) \longrightarrow \GL_1(D_0\otimes_F F_v) \stackrel{\nrd}{\longrightarrow} \bbR^\times \longrightarrow 
          H^1(\sigma, G(F_v)) \longrightarrow \{1\}.
   \end{equation*}
    If $D_0\otimes_F F_v$ is split, then the reduced norm is surjective and the claim follows. 
    Otherwise, suppose $v \in \Ram_\infty(D_0)$ then the image of the reduced norm only consists of the positive 
    real numbers and consequently $ H^1(\sigma, G(F_v))$ consists of two elements. It is easy to check that $1$
    and $-1$ are not equivalent.
\end{proof}

For an infinite place $v \in V_\infty(F)$ we denote the embedding $F \to F_v$ by $\iota_v$.
We say that an element of $F^\times$ is \emph{$D_0$-positive},
if for all $v \in \Ram_\infty(D_0)$ we have $\iota_v(x) > 0$  in $F_v \cong \bbR$. 
The multiplicative subgroup of $F^\times$ consisting of $D_0$-positive elements is denoted $F^\times_{D_0}$.
Similarly for $E$: an element $x \in E^\times$ is called $D$-positive, if $\iota_w(x) > 0$
for all $w \in \Ram_\infty(D)$. We write $E^\times_D$ for the group of $D$-positive elements.

Let $c$ denote the number of places $v \in \Ram_\infty(D_0)$ which are divided by a complex place of $E$.
There is an isomorphism $(E^\times_D\cap F)/F^\times_{D_0} \cong (\bbZ/2\bbZ)^c$. 

\begin{lemma}
There is a bijection between $H^1(\sigma, G(F))$ and $(E^\times_D\cap F)/F^\times_{D_0}$.
\end{lemma}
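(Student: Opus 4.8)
The plan is to compute $H^1(\sigma, G(F))$ through the reduced norm, exactly as in the proofs of Lemmas~\ref{lemmaH1LocalField} and \ref{lemmaH1LocalField2}, but now globally over $F$. Recall that $G(F) = G_0(E) = \SL_1(D)$ and that $\sigma$ acts as $\id_{D_0}\otimes\sigma$. The first step is to record the $\sigma$-equivariant short exact sequence of groups
\begin{equation*}
    1 \longrightarrow \SL_1(D) \longrightarrow D^\times \stackrel{\nrd}{\longrightarrow} E^\times_D \longrightarrow 1,
\end{equation*}
where, by the Hasse--Schilling--Maass norm theorem (cf. \cite{Vigneras1980}), the image of the reduced norm $\nrd\colon D^\times \to E^\times$ is \emph{precisely} the group $E^\times_D$ of $D$-positive elements. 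This subgroup is $\sigma$-stable since $\Ram_\infty(D)$ is $\sigma$-stable, so the sequence is one of $\sigma$-groups and one may apply the long exact cohomology sequence of Serre \cite[I.\S5]{Serre1964}.

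The second step is to extract from that sequence the exact piece of pointed sets
\begin{equation*}
    (D^\times)^\sigma \stackrel{\nrd}{\longrightarrow} (E^\times_D)^\sigma \stackrel{\delta}{\longrightarrow} H^1(\sigma, \SL_1(D)) \longrightarrow H^1(\sigma, D^\times),
\end{equation*}
and to observe that $H^1(\sigma, D^\times) = H^1(\sigma, \GL_1(D_0\otimes_F E)) = \{1\}$ by the global form of Hilbert's Theorem~90 for central simple algebras (the analogue over $F$ of the local statements (29.2)/(29.4) cited from \cite{BOInv}). Hence $\delta$ is surjective. It then remains to identify the invariants: on the one hand $(E^\times_D)^\sigma = E^\times_D \cap F$; on the other hand $(D^\times)^\sigma = (D_0\otimes_F E)^{\id_{D_0}\otimes\sigma} \cap D^\times = D_0^\times$, so that $\nrd\bigl((D^\times)^\sigma\bigr) = \nrd_{D_0}(D_0^\times) = F^\times_{D_0}$, again by Hasse--Schilling over $F$. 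Here I would note in passing that $F^\times_{D_0} \subseteq E^\times_D \cap F$, so the quotient in the statement is meaningful.

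The final and genuinely non-formal step is to promote the surjection $\delta$ to a \emph{bijection} out of the quotient $(E^\times_D\cap F)/F^\times_{D_0}$. Exactness already tells us that $\delta(c)$ is the trivial class iff $c \in F^\times_{D_0}$; what is left is to show $\delta(c)=\delta(c')$ iff $c c'^{-1}\in F^\times_{D_0}$. Choosing lifts $b,b'\in D^\times$ with $\nrd(b)=c,\ \nrd(b')=c'$, one has $\delta(c)=[\,b^{-1}\up{\sigma}b\,]$ and $\delta(c')=[\,b'^{-1}\up{\sigma}b'\,]$. If these agree there is $s\in\SL_1(D)$ with $s^{-1}(b^{-1}\up{\sigma}b)\up{\sigma}s = b'^{-1}\up{\sigma}b'$; a short manipulation of this relation shows that $w := b s b'^{-1}$ is $\sigma$-fixed, hence lies in $D_0^\times$, and satisfies $\nrd(w)=c c'^{-1}$, so $c c'^{-1}\in\nrd(D_0^\times)=F^\times_{D_0}$. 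The converse is the easy check that replacing a lift $b$ by $ub$ with $u\in D_0^\times=(D^\times)^\sigma$ leaves $b^{-1}\up{\sigma}b$ unchanged. This yields the desired bijection $(E^\times_D\cap F)/F^\times_{D_0}\xrightarrow{\ \sim\ }H^1(\sigma, G(F))$, the identity coset going to the distinguished point.

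I expect the main obstacle to be exactly this last step: since $\delta$ is only a morphism of pointed sets, surjectivity comes for free from Hilbert~90, but determining that its fibres are precisely the cosets of $F^\times_{D_0}$ requires the explicit twisting computation above, which crucially uses both the identification $(D^\times)^\sigma = D_0^\times$ and the Hasse--Schilling theorem to recognize $\nrd(D_0^\times)$ as $F^\times_{D_0}$.
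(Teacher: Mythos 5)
Your proposal is correct and follows essentially the same route as the paper: the $\sigma$-equivariant short exact sequence $1 \to G(F) \to \GL_1(D) \xrightarrow{\nrd} E^\times_D \to 1$ obtained from the Hasse--Schilling--Maass theorem, Hilbert's Theorem 90 for $\GL_1(D)$ to kill $H^1(\sigma,\GL_1(D))$, and the long exact sequence of pointed sets identifying $H^1(\sigma,G(F))$ with the cokernel-type quotient $(E^\times_D\cap F)/\nrd(\GL_1(D_0)) = (E^\times_D\cap F)/F^\times_{D_0}$. The only difference is that you spell out the standard twisting argument showing the fibres of the connecting map are exactly the $F^\times_{D_0}$-cosets, a fact the paper leaves implicit as part of the standard theory of the non-abelian cohomology exact sequence (Serre, I.\S 5).
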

\begin{proof}
   As before, we have $H^1(\sigma, \GL_1(D)) = \{1\}$ (cf. (29.2) in \cite{BOInv}).
   By the theorem of Hasse-Schilling-Maass on norms (see Thm. 4.1 p.80 in \cite{Vigneras1980} for quaternion algebras 
or (33.15) in \cite{Reiner2003} for central simple algebras) the image of the reduced norm map
   $\nrd: \GL_1(D) \to E^\times$ is exactly $E^\times_D$. Similarly, we have $\nrd(\GL_1(D_0)) = F^\times_{D_0}$.
   Now, consider the exact sequence
   \begin{equation*}
       1 \longrightarrow G(F) \longrightarrow \GL_1(D) \stackrel{\nrd}{\longrightarrow} E^\times_D \longrightarrow 1.
   \end{equation*}
   As in the proof of Lemma \ref{lemmaH1LocalField2} there is a long exact sequence
   \begin{equation*}
         1 \longrightarrow G_0(F) \longrightarrow \GL_1(D_0) \stackrel{\nrd}{\longrightarrow} E^\times_D \cap F \longrightarrow 
          H^1(\sigma, G(F)) \longrightarrow \{1\}.
   \end{equation*}
\end{proof}
\begin{corollary}\label{corBijGlobalInfinity}
 The canonical map $H^1(\sigma, G(F)) \to H^1(\sigma, G_\infty)$ is bijective.
\end{corollary}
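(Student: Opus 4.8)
The plan is to compute both pointed sets explicitly and then to recognise the canonical map as the reduced-norm sign map. First I would exploit that $\sigma$ is defined over $F$ and therefore preserves each factor of the decomposition $G_\infty = \prod_{v\in V_\infty(F)} G(F_v)$. A $1$-cocycle for $\sigma$ in a direct product on whose factors $\sigma$ acts separately is just a tuple of $1$-cocycles, one in each factor, and the equivalence relation is componentwise; hence there is a canonical identification
\[
   H^1(\sigma, G_\infty) \;=\; \prod_{v\in V_\infty(F)} H^1(\sigma, G(F_v)).
\]
By Lemma \ref{lemmaH1LocalField2} every factor is trivial except for the $c$ places $v\in\Ram_\infty(D_0)$ lying below a complex place of $E$, each of which contributes $\{\pm 1\}$. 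Thus $H^1(\sigma, G_\infty)\cong(\bbZ/2\bbZ)^c$, which already matches the cardinality $2^c$ of $H^1(\sigma, G(F))\cong (E^\times_D\cap F)/F^\times_{D_0}$ furnished by the preceding lemma.

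Next I would make the canonical map explicit through the reduced norm. The short exact sequence $1\to G(F)\to \GL_1(D)\xrightarrow{\nrd} E^\times_D\to 1$ localises, via $F\hookrightarrow F_v$ and $E\hookrightarrow E_w$, to the analogous sequence over the completions. Taking $\sigma$-invariants and using $H^1(\sigma,\GL_1(D))=\{1\}$ both globally and locally (as in the preceding proofs) yields a commutative square of connecting maps
\[
\begin{array}{ccc}
 E^\times_D\cap F & \xrightarrow{\ \delta\ } & H^1(\sigma, G(F)) \\[2pt]
 \big\downarrow & & \big\downarrow \\[2pt]
 \prod_v F_v^\times & \xrightarrow{\ (\delta_v)\ } & \prod_v H^1(\sigma, G(F_v)),
\end{array}
\]
the product running over the $c$ relevant places and the left vertical arrow being $x\mapsto(\iota_v(x))_v$. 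By the proof of Lemma \ref{lemmaH1LocalField2} each $\delta_v$ identifies $H^1(\sigma, G(F_v))$ with $F_v^\times/\nrd(\GL_1(D_0\otimes_F F_v)) = \bbR^\times/\bbR_{>0}$, so $\delta_v$ is simply the sign of $\iota_v(x)$. Consequently the composite $E^\times_D\cap F \to \prod_v H^1(\sigma, G(F_v))$ is $x\mapsto (\operatorname{sign}\iota_v(x))_v$.

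It then remains to recognise this sign map as the isomorphism $(E^\times_D\cap F)/F^\times_{D_0}\cong(\bbZ/2\bbZ)^c$ recorded above. Its kernel consists of those $x\in E^\times_D\cap F$ that are in addition positive at the $c$ complex-ramified places; since membership in $E^\times_D\cap F$ already forces positivity at the real-ramified places, the kernel is precisely $F^\times_{D_0}$, so the induced map on the quotient is injective. Surjectivity onto $(\bbZ/2\bbZ)^c$ follows from weak approximation at the archimedean places, choosing $x\in F^\times$ with prescribed signs at the $c$ places and positive at the real-ramified ones. Hence the canonical map $H^1(\sigma, G(F))\to H^1(\sigma, G_\infty)$ is a bijection.

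The product decomposition of $H^1$ and the weak-approximation surjectivity are routine; the one step demanding genuine care is the commutativity of the square, that is, the functoriality of the connecting homomorphism under localisation, together with the precise identification of each $\delta_v$ with the archimedean sign. This is where the argument really hinges. Since both sides are finite of the same cardinality $2^c$, once injectivity is established bijectivity is automatic, so in the end I would only need to verify either injectivity or surjectivity rather than both.
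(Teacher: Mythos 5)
Your proof is correct and follows essentially the route the paper intends: the paper states this corollary without a written proof, regarding it as immediate from the preceding lemma (which identifies $H^1(\sigma,G(F))$ with $(E^\times_D\cap F)/F^\times_{D_0}\cong(\bbZ/2\bbZ)^c$ via the reduced-norm exact sequence) together with Lemma \ref{lemmaH1LocalField2}. Your write-up simply makes the implicit steps explicit --- the product decomposition of $H^1(\sigma,G_\infty)$, the functoriality of the connecting maps under localisation, and the identification of the resulting map with the archimedean sign map, whose bijectivity on the quotient follows from weak approximation --- which is exactly the intended argument.
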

\begin{remark}\label{remBijInfinity}
   The canonical map $H^1(\sigma, K_\infty) \to H^1(\sigma, G_\infty)$ is a bijection.
   This follows in general for connected semi-simple groups by an argument of Rohlfs using the Cartan decomposition.
   The reader may consult, for example, Lemma 1.4 in \cite{Rohlfs1981}.
\end{remark}

\begin{lemma}\label{lemLocalH1Summary}
   Let $v \in V_f(F)$ be a finite place. If $v$ is unramified in $E$, then $H^1(\sigma, G(\cO_{F,v})) = \{1\}$.
   If $v$ ramifies in $E$ and lies over an odd prime number, then $H^1(\sigma, G(\cO_{F,v})) = \{\pm 1\}$.
\end{lemma}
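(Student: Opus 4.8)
The plan is to reduce everything to a purely local computation and then split into cases according to the splitting type of $v$ in $E$. By the defining property of Weil restriction, $G(\cO_{F,v}) = G_0(\cO_E\otimes_{\cO_F}\cO_{F,v}) = G_0\bigl(\prod_{w\mid v}\cO_{E,w}\bigr)$, with $\sigma$ acting through the Galois action on the coefficient ring. If $v$ splits in $E$ there are two places over $v$, the product is $\cO_{F,v}\times\cO_{F,v}$, and $G(\cO_{F,v})\cong G_0(\cO_{F,v})\times G_0(\cO_{F,v})$ with $\sigma$ interchanging the factors; the swap argument recalled in Lemma \ref{lemmaH1LocalField} then gives $H^1(\sigma, G(\cO_{F,v})) = \{1\}$. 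If $v$ is inert there is a single place $w$ and $\cO_{E,w}/\cO_{F,v}$ is a finite étale (unramified) quadratic Galois extension with group $\langle\sigma\rangle$. I would argue that $H^1(\sigma, G_0(\cO_{E,w}))$ classifies the forms of $G_0$ split by this cover and hence injects into $H^1_{\mathrm{et}}(\cO_{F,v},G_0)$; since $\cO_{F,v}$ is henselian local, $G_0$ is smooth with connected special fibre, and Lang's theorem makes $H^1$ of the finite residue field vanish, this set is trivial. This step is insensitive to the residue characteristic, which is exactly why the split and inert cases give $\{1\}$ with no parity hypothesis.

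For the remaining, more delicate, case assume $v$ ramifies in $E$ and lies over an odd prime, so again there is one place $w$, but now $\cO_{E,w}/\cO_{F,v}$ is ramified, the two residue fields coincide with a common finite field $\kappa$ of odd characteristic, and $\sigma$ acts trivially on $\kappa$. I would exploit the smoothness of $G_0$ to obtain the $\sigma$-equivariant short exact sequence $1\to G_1\to G_0(\cO_{E,w})\to G_0(\kappa)\to 1$, where $G_1$ is the first congruence (pro-$p$) subgroup. The successive quotients in the standard filtration of $G_1$ are vector spaces over $\kappa$, hence $p$-groups; since $\sigma$ has order two and $2$ is invertible in characteristic $p$ (here $p$ is odd), the $\sigma$-cohomology of each graded piece vanishes, and a dévissage through this filtration yields $H^1(\sigma, G_1) = \{1\}$ together with bijectivity of the induced map $H^1(\sigma, G_0(\cO_{E,w}))\xrightarrow{\;\sim\;} H^1(\sigma, G_0(\kappa))$.

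It then remains to compute $H^1(\sigma, G_0(\kappa))$ for the trivial $\sigma$-action, where it is the set of conjugacy classes of elements $x\in G_0(\kappa)$ with $x^2=1$. I expect exactly the two classes of $1$ and $-1$. When $v\notin\Ram_f(D_0)$ the reduction $G_0(\kappa)$ is $\SL_2(\kappa)$, and in odd characteristic any $x$ with $x^2=1$ is diagonalizable with eigenvalues in $\{\pm1\}$ and determinant $1$, forcing $x=\pm1$. When $v\in\Ram_f(D_0)$ the reduction is $\SL_1(R)$ for the local ring $R = \Lambda_0\otimes_{\cO_F}\kappa$ with square-zero radical; reducing modulo the radical forces $x\equiv\pm1$, and the nilpotent correction must vanish because $2$ is invertible, again leaving $x=\pm1$. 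As $1$ and $-1$ are not conjugate, this gives $H^1(\sigma, G_0(\kappa)) = \{\pm1\}$, and hence $H^1(\sigma, G(\cO_{F,v})) = \{\pm1\}$ in the ramified case.

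The main obstacle is the ramified case: both the dévissage establishing $H^1(\sigma, G_1)=\{1\}$, which genuinely requires $p$ odd and some care with the non-abelian inverse limit, and the order-two count in $G_0(\kappa)$ when $D_0$ ramifies at $v$, where the reduction fails to be semisimple. The inert case also requires the residue-characteristic-free descent input (Lang's theorem), which is the reason it is kept apart from the ramified computation; these technical local statements are precisely the ones deferred to Section \ref{secAppendixLocal}.
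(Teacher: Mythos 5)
Your proof is correct, but it takes a genuinely different route from the paper's. The split case is the same (the swap argument of Lemma \ref{lemmaH1LocalField}). For the inert and ramified cases the paper argues by explicit integral Galois descent on the order: given a cocycle $b$ it studies the fixed lattice $U(b)=\{x\in\Lambda \mid b\up{\sigma}x = x\}$, uses that the maximal order $\Lambda_0$ is a principal ideal ring to write $U(b)=g\Lambda_0$, and pins down the right ideal $\im(\phi_b)\subseteq\Lambda$; in the unramified case this gives $H^1(\sigma,\GL_1(\Lambda))=\{1\}$ (Lemma \ref{lemmaLocalH1Unram}), and one passes to $\SL_1$ via surjectivity of the reduced norm (Corollary \ref{corLocalUnramified1}), while in the ramified case the trichotomy $\im(\phi_b)=\Lambda$, $\im(\phi_b)=\pi\Lambda$, or intermediate yields exactly the two classes $\pm 1$ (Lemma \ref{lemLocalRamified1}). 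You instead use cohomological machinery: for the inert case the \v{C}ech-to-\'etale injection, henselian lifting of smooth torsors, and Lang's theorem; for the ramified case a congruence-filtration d\'evissage (legitimate because the graded pieces are abelian $p$-groups with $p$ odd, so $H^1$ and $H^2$ of the order-two group vanish for every twist of the action) reducing the computation to conjugacy classes of involutions in the residue group, which you correctly find to be $\{\pm 1\}$ both for $\SL_2$ and for the norm-one group of the non-semisimple reduction when $v\in\Ram_f(D_0)$. Your d\'evissage is essentially Rohlfs' general method, which the paper cites and deliberately avoids in favour of self-contained computations (Remark \ref{remRohlfsGeneral}). What your route buys: conceptual clarity and generality---the inert case works verbatim for any smooth affine group scheme with connected fibres, with no case distinction on $D_0$, and is visibly characteristic-free. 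What the paper's route buys: it is elementary (no \'etale cohomology, no Lang), and the explicit descent produces concrete representatives of the form $\gamma=\pm z_v^{-1}\up{\sigma}z_v$ that are reused later, in the volume comparison inside the proof of Theorem \ref{thmLefschetzNumber}.

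Two small points you should tighten. First, $H^1(\sigma,G_0(\cO_{E,w}))$ classifies $G_0$-torsors over $\cO_{F,v}$ trivialized by the cover, not ``forms of $G_0$'' (forms correspond to $\Aut(G_0)$-coefficients); the injection into $H^1_{\mathrm{et}}(\cO_{F,v},G_0)$ is the torsor statement. Second, the connectedness of the special fibre of $G_0$ when $v\in\Ram_f(D_0)$---which Lang's theorem requires---deserves a sentence: the reduction of the reduced norm is the norm form of the quadratic residue extension in the semisimple part, so the fibre is an extension of the associated norm-one torus by a vector group, hence connected in every residue characteristic.
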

\begin{proof}
   Let $\Ip_0$ be the prime ideal corresponding to $v$.
   In the case where $\Ip_0$ is split in $E$, the claim follows as in Lemma \ref{lemmaH1LocalField} 
   since $G(\cO_{F,v}) \cong G_0(\cO_{F,v}) \times G_0(\cO_{F,v})$.
   
   The other cases are treated in Corollary \ref{corLocalUnramified1} and Lemma \ref{lemLocalRamified1} in the appendix.
\end{proof}

\begin{corollary}
  The canonical map $H^1(\sigma, G(F)) \to H^1(\sigma, G(\bbA_F))$ is bijective.
  In particular, the projection $\calH^1(\Ia_0) \to H^1(\sigma, K_\infty K(\Ia_0))$ is a bijection for every non-trivial proper
 ideal $\Ia_0 \subseteq \cO_F$.
\end{corollary}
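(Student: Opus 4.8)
The plan is to reduce everything to the local computations already carried out and to the archimedean comparison in Corollary \ref{corBijGlobalInfinity}. The first step is to compute $H^1(\sigma, G(\bbA_F))$ by separating the archimedean and finite parts. Since $G(\bbA_F) = G_\infty \times G(\bbA_{F,f})$ as $\sigma$-groups — the automorphism $\sigma$ is defined over $\cO_F$, hence acts factorwise and does not permute the places of $F$ — and since non-abelian $H^1$ of a direct product with a factorwise involution is the product of the $H^1$'s of the factors, I would write
\begin{equation*}
    H^1(\sigma, G(\bbA_F)) \cong H^1(\sigma, G_\infty) \times H^1(\sigma, G(\bbA_{F,f})).
\end{equation*}

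The key (and only slightly technical) step is to show that the finite part is trivial, i.e. $H^1(\sigma, G(\bbA_{F,f})) = \{1\}$. Here I would use that $G(\bbA_{F,f})$ is the restricted product of the groups $G(F_v)$ over $v \in V_f(F)$ with respect to the compact open subgroups $G(\cO_{F,v})$, on which $\sigma$ acts factorwise. A cocycle is thus a family $h = (h_v)_v$ with $h_v \in Z^1(\sigma, G(F_v))$ and $h_v \in G(\cO_{F,v})$ for all but finitely many $v$. By Lemma \ref{lemmaH1LocalField} every local class is trivial, so each $h_v$ is a coboundary $h_v = b_v^{-1}\up{\sigma}b_v$. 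To assemble the $b_v$ into a genuine finite adele, I would invoke Lemma \ref{lemLocalH1Summary}: for the cofinitely many places $v$ that are unramified in $E$ and satisfy $h_v \in G(\cO_{F,v})$ one even has $H^1(\sigma, G(\cO_{F,v})) = \{1\}$, so $b_v$ may be chosen in $G(\cO_{F,v})$; for the finitely many remaining places any $b_v \in G(F_v)$ works. Then $b = (b_v)_v \in G(\bbA_{F,f})$ and $b^{-1}\up{\sigma}b = h$, which proves triviality. Consequently $H^1(\sigma, G(\bbA_F)) \cong H^1(\sigma, G_\infty)$, and under this identification the localization map $H^1(\sigma, G(F)) \to H^1(\sigma, G(\bbA_F))$ becomes the map $H^1(\sigma, G(F)) \to H^1(\sigma, G_\infty)$ of Corollary \ref{corBijGlobalInfinity} (the composite $G(F) \hookrightarrow G(\bbA_F) \to G_\infty$ being the diagonal archimedean embedding), which is a bijection. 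This yields the first assertion.

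For the \emph{in particular} part, I would simply exploit the fibred-product definition of $\calH^1(\Ia_0)$. Writing it as
\begin{equation*}
   \calH^1(\Ia_0) = H^1(\sigma, K_\infty K(\Ia_0)) \mathop{\times}_{H^1(\sigma, G(\bbA_F))} H^1(\sigma, G(F)),
\end{equation*}
a fibred product $A \times_C B$ projects bijectively onto $A$ as soon as the structure map $B \to C$ is a bijection, since then every element of $A$ admits a unique partner in $B$. Taking $B = H^1(\sigma, G(F))$, $C = H^1(\sigma, G(\bbA_F))$, and using the bijectivity just established, the projection $\calH^1(\Ia_0) \to H^1(\sigma, K_\infty K(\Ia_0))$ is a bijection, as claimed. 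The main (and essentially only) obstacle is the patching argument for $H^1(\sigma, G(\bbA_{F,f}))$, where one must upgrade local coboundaries to an adelic one; everything else is formal once Lemmas \ref{lemmaH1LocalField} and \ref{lemLocalH1Summary} together with Corollary \ref{corBijGlobalInfinity} are in hand.
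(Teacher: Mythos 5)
Your proof is correct and follows essentially the same route as the paper: decompose $H^1(\sigma, G(\bbA_F))$ into archimedean and finite parts, kill the finite part using Lemma \ref{lemmaH1LocalField} together with Lemma \ref{lemLocalH1Summary}, and conclude via Corollary \ref{corBijGlobalInfinity} and the formal fibred-product argument. The only difference is that you spell out the restricted-product patching step (choosing the trivializing elements $b_v$ integrally at almost all places), which the paper leaves implicit in its citation of the two lemmas.
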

\begin{proof}
  Notice that $H^1(\sigma,G(\bbA_F)) = H^1(\sigma, G_\infty) \times H^1(\sigma, G(\bbA_{F,f}))$.
  It follows from Lemma \ref{lemmaH1LocalField} and Lemma \ref{lemLocalH1Summary} that $H^1(\sigma, G(\bbA_{F,f})) = \{1\}$.
  Thus the result follows from Corollary \ref{corBijGlobalInfinity}. 
\end{proof}

Let $S$ be the set of finite places $v \in V_f(F)$ which divide $2$ and which are ramified in $E$.
This is the set of places where determining the local $H^1$ is difficult (see also Remark \ref{remRohlfsGeneral}).
We define $K(\Ia_0,2) := \prod_{v \in S} K_v(\Ia_0)$.
Moreover, let $R$ be the set of finite places $v \in V_f(F)$ which are ramified in $E$ but which do not divide $2$.
Given an ideal $\Ia_0 \subseteq \cO_F$, we define 
$\rho(\Ia_0) := |\{\:v \in R\:|\:v \text{ does not divide } \Ia_0\:\}|$.
As above, let $c$ be the number of places $v \in \Ram_\infty(D_0)$ which are divided by a complex place of $E$.
We get the following corollary.

\begin{corollary}\label{corSizeOfH1}
 For every non-trivial proper ideal $\Ia_0 \subseteq \cO_F$ the set $\calH^1(\Ia_0)$ consists of
  \begin{equation*}
      2^{c+\rho(\Ia_0)} |H^1(\sigma,K(\Ia_0, 2))|
  \end{equation*}
 elements.
\end{corollary}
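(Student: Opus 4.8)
The plan is to reduce the counting of $\calH^1(\Ia_0)$ to a product of purely local computations. The starting point is the preceding corollary: since the canonical map $H^1(\sigma, G(F)) \to H^1(\sigma, G(\bbA_F))$ is bijective, the fibred product $\calH^1(\Ia_0)$ projects bijectively onto $H^1(\sigma, K_\infty K(\Ia_0))$, so it suffices to count the elements of the latter set. First I would observe that $K_\infty K(\Ia_0) = K_\infty \times \prod_{v \in V_f(F)} K_v(\Ia_0)$ is a direct product of groups on which $\sigma$ acts factorwise (note that $\sigma$, being defined over $\cO_F$, commutes with the reduction maps and hence preserves each $K_v(\Ia_0)$). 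Since a $1$-cocycle with values in a product is just a tuple of $1$-cocycles and equivalence is checked factorwise, the cohomology splits as
\[
   H^1(\sigma, K_\infty K(\Ia_0)) \;=\; H^1(\sigma, K_\infty) \times \prod_{v \in V_f(F)} H^1(\sigma, K_v(\Ia_0)),
\]
and its cardinality is the product of the cardinalities of the factors.

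The archimedean factor is handled by the results already established: combining Remark \ref{remBijInfinity}, Corollary \ref{corBijGlobalInfinity}, and the identification $(E^\times_D \cap F)/F^\times_{D_0} \cong (\bbZ/2\bbZ)^c$, one obtains $|H^1(\sigma, K_\infty)| = |H^1(\sigma, G(F))| = 2^c$, which produces the factor $2^c$.

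For the finite factors I would partition $V_f(F)$ and treat the places in $S$ separately. The places $v \in S$ are collected into the single factor $\prod_{v \in S} H^1(\sigma, K_v(\Ia_0)) = H^1(\sigma, K(\Ia_0,2))$, which appears untouched in the final formula; this is exactly where the local analysis is intractable. For $v \notin S$ I distinguish whether $v$ divides $\Ia_0$. If $v \nmid \Ia_0$ then $\Ia_0\cO_{F,v} = \cO_{F,v}$, so $K_v(\Ia_0) = G(\cO_{F,v})$, and Lemma \ref{lemLocalH1Summary} gives $H^1(\sigma, K_v(\Ia_0)) = \{1\}$ when $v$ is unramified in $E$ and $H^1(\sigma, K_v(\Ia_0)) = \{\pm 1\}$ (two elements) when $v \in R$. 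If $v \mid \Ia_0$ then $K_v(\Ia_0)$ is a proper congruence subgroup and I would show $H^1(\sigma, K_v(\Ia_0)) = \{1\}$: for split $v$ this follows from the swapping lemma used in Lemma \ref{lemmaH1LocalField}, and for inert or ramified-odd $v$ it is the content of the appendix results Corollary \ref{corLocalUnramified1} and Lemma \ref{lemLocalRamified1}. Thus the only finite factors of nontrivial cardinality are the places $v \in R$ with $v \nmid \Ia_0$, each contributing a factor $2$; there are exactly $\rho(\Ia_0)$ of these, giving $2^{\rho(\Ia_0)}$. Multiplying the three contributions yields $2^{c+\rho(\Ia_0)}\,|H^1(\sigma, K(\Ia_0,2))|$.

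The main obstacle is the vanishing $H^1(\sigma, K_v(\Ia_0)) = \{1\}$ at the inert and ramified-odd places dividing $\Ia_0$: one must show that deepening the congruence level trivializes the nonabelian cohomology. This is precisely what the local appendix lemmas are designed to provide, and it is also the reason the dyadic places in $S$, where no such trivialization is available in general, must be left as the opaque factor $|H^1(\sigma, K(\Ia_0,2))|$.
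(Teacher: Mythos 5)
Your overall strategy is exactly the paper's: reduce to $H^1(\sigma, K_\infty K(\Ia_0))$ via the preceding corollary, use the factorwise splitting of cohomology over the product $K_\infty \times \prod_v K_v(\Ia_0)$, extract the archimedean factor $2^c$ from Remark \ref{remBijInfinity} and Corollary \ref{corBijGlobalInfinity}, and count the finite places case by case, isolating the dyadic ramified places into the factor $|H^1(\sigma,K(\Ia_0,2))|$. The one concrete flaw is your justification for the places $v \notin S$ that \emph{divide} $\Ia_0$. You cite Corollary \ref{corLocalUnramified1} and Lemma \ref{lemLocalRamified1}, but both of these concern the full local group $G(\co_0)=K(0)$, not the congruence subgroups $K(j)$ with $j \geq 1$ that arise when $v \mid \Ia_0$; worse, Lemma \ref{lemLocalRamified1} asserts $H^1(\sigma,\SL_1(\Lambda)) = \{\pm 1\}$ — two elements — so it cannot deliver the triviality you need, and taken literally it would inflate the count by a spurious factor of $2$ for every ramified-odd prime dividing $\Ia_0$. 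The statements you actually need are Lemma \ref{lemLocalUnramified2} (unramified case: $H^1(\sigma,K(j)) = \{1\}$ for all $j \geq 0$) and Lemma \ref{lemLocalRamified2} (ramified case, $p \neq 2$: $H^1(\sigma,K(j)) = \{1\}$ for all $j \geq 1$), and these are precisely the lemmas the paper's proof invokes. Your concluding remark shows you understand that the essential point is that deepening the congruence level kills the class of $-1$ at ramified-odd places; with the citations corrected to the level-$j$ lemmas, your argument is complete and coincides with the paper's.
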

\begin{proof}
  The assertion follows from the bijection $\calH^1(\Ia_0) \to H^1(\sigma, K_\infty K(\Ia_0))$ together with Remark \ref{remBijInfinity} and
  the local results Lemma \ref{lemLocalUnramified2} and Lemma \ref{lemLocalRamified2} which can be found in the appendix.
\end{proof}

\subsection{Euler Characteristic of Fixed Point Components}
In this section we compute the Euler characteristic of the fixed point components $\vartheta^{-1}(\eta)$ defined in \ref{sectFPDecomposition}.
Let $\Ia_0 \subseteq \cO_F$ be a non-trivial ideal.  We choose a class 
$\eta \in \calH^1(\Ia_0)$ and a
representative $(k, \gamma) \in Z^1(\sigma, K_\infty K(\Ia_0))\times Z^1(\sigma, G(F))$ together with $a \in G(\bbA_F)$
which satisfies $\up{\sigma}a = k^{-1}a\gamma$. Since we still assume $G$ to have strong approximation, we can achieve that 
$a \in G_\infty$ and $\gamma \in \Gamma(\Ia_0)$ (changing the chosen representative).
Then the group $G(\gamma)$ of fixed points of the $\gamma$-twisted action is a group scheme defined over $\cO_F$.

\begin{remark}\label{remFixedPointCompOfSameType}
  For any $\gamma \in Z^1(\sigma, G(F))$ 
  the fixed point group $G(\gamma)$ and $G_0$ are isomorphic over $F$.

 This can be seen as follows: By Hilbert's Theorem 90 we have $H^1(\sigma, \GL_1(D)) = \{1\}$. Moreover,
 the canonical map $\inn_*: H^1(\sigma, G(F)) \to H^1(\sigma, \Aut_F(G))$ factors through $H^1(\sigma, \GL_1(D))$ and thus is trivial.
  We deduce the existence of an automorphism $\psi: G\times_{\cO_F} F \to G\times_{\cO_F} F$ such that
 \begin{equation*}
   \inn(\gamma) = \psi^{-1} \circ \sigma \circ \psi \circ \sigma^{-1}.
 \end{equation*}
 In other words, $\psi$ is an isomorphism of $G$ over $F$ such that $\psi \circ \sigma|\gamma = \sigma \circ \psi$.
Recall that $G_0 \cong G^\sigma$ over $F$.

 We deduce that the fixed point components $\vartheta^{-1}(\eta)$ are all associated to the same group over $F$ (cf. \ref{sectStructureFixedPoints}). 
 An important consequence is that the sign of the Euler characteristic $\chi(\vartheta^{-1}(\eta))$
 is the same for all the components. This can be seen as follows. First note that
 Harder's Gauß-Bonnet theorem (see \cite{Harder1971}) implies that we may use the Euler-Poincar\'e measure (in the sense of Serre)
 to compute the Euler characteristic.
 Further, the sign of the Euler-Poincar\'e measure
  only depends on the structure of the associated real Lie group (see Prop. 23 in \cite{Serre1971}). 
\end{remark}

\begin{theorem}\label{ThmEulerCharacteristic}
  Let $\Ia_0 \subseteq \cO_F$ be a proper ideal (such that $\Gamma(\Ia_0)$ is torsion-free) and
  let $K_{0,\infty}$ be any maximal compact subgroup of $G_{0,\infty}$.
  Then the Euler characteristic of the double coset space $\bsl{K_{0,\infty} K_0(\Ia_0)}{G_0(\bbA_F)}/G_0(F)$ 
   can be computed using the following formulas
  \begin{align*}
       \chi\bigl(&\bsl{K_{0,\infty} K_0(\Ia_0)}{G_0(\bbA_F)}/G_0(F)\bigr) \\
              &= 
                     (-1/2)^r \zeta_F(-1) [K_0: K_0(\Ia_0)]\prod_{\Ip_0 \in \Ram_f(D_0)} (\Nr_{F/\bbQ}(\Ip_0)-1) \\
                &= (-2)^s (4\pi^2)^{-[F:\bbQ]}\zeta_F(2) |\disc_F|^{3/2} [K_0: K_0(\Ia_0)]\prod_{\Ip_0 \in \Ram_f(D_0)} (\Nr_{F/\bbQ}(\Ip_0)-1).\\
   \end{align*}
  Here $r$ denotes the number of real places of $F$ ramified in $D_0$ and $s$ denotes the number real places where $D_0$ splits.
  Moreover, $\zeta_F$ denotes the zeta function of the number field $F$, $\disc_F$ denotes the discriminant of $F$ and
  $\N_{F/\bbQ}(\Ip_0):= |\cO_F/\Ip_0|$ denotes the ideal norm.
\end{theorem}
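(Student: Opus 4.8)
The plan is to compute the Euler characteristic through an Euler--Poincar\'e measure in the sense of Serre \cite{Serre1971}, and then to evaluate that measure by comparison with the Tamagawa measure on $G_0(\bbA_F)$, whose total mass is pinned down by the fact that $G_0 = \SL_1(\Lambda_0)$ is semisimple and simply connected, so that its Tamagawa number equals $1$. I would first reduce the statement to such a measure computation. Decomposing along the finite class set $G_0(F)\backslash G_0(\bbA_{F,f})/K_0(\Ia_0)$ writes $\bsl{K_{0,\infty}K_0(\Ia_0)}{G_0(\bbA_F)}/G_0(F)$ as a finite topologically disjoint union of quotients $\Gamma_i\backslash X_0$, where $X_0 = \bsl{K_{0,\infty}}{G_{0,\infty}}$ and each $\Gamma_i$ is arithmetic (torsion-free, since $\Ia_0$ was chosen so that $\Gamma_0(\Ia_0)$ is torsion-free). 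As the Euler characteristic is additive over this decomposition and each $\Gamma_i$ is of type (FL), Harder's Gau\ss--Bonnet theorem \cite{Harder1971} lets me replace $\chi$ by the corresponding Euler--Poincar\'e measure of the total space; this is legitimate precisely because $\delta(\SL_2(\bbR)) = \delta(\SL_1(\bbH)) = 0$, so the measure is nonzero on each archimedean factor.

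Next I would factor the Euler--Poincar\'e measure into local contributions and match it against the Tamagawa measure $\tau$, using $\tau(G_0(\bbA_F)/G_0(F)) = 1$. At a finite place where $D_0$ splits, the Tamagawa-normalised volume of $G_0(\cO_{F,v})$ is $1 - \N_{F/\bbQ}(\Ip_0)^{-2}$, and the product of these split factors is $\prod_v(1 - \N_{F/\bbQ}(\Ip_0)^{-2}) = \zeta_F(2)^{-1}$; at a ramified finite place $\Ip_0 \in \Ram_f(D_0)$ the corresponding local density on the reduction of the group scheme $\SL_1(\Lambda_{0,v})$ produces the extra factor $\N_{F/\bbQ}(\Ip_0)-1$, exactly as in Eichler's mass formula. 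Since the Tamagawa measure carries the normalising constant $|\disc_F|^{-\dim G_0/2} = |\disc_F|^{-3/2}$ (here $\dim G_0 = 3$), the identity $\tau = 1$ forces the archimedean covolume to be $|\disc_F|^{3/2}\zeta_F(2)$ times these finite ramification factors. The level enters only through the finite covolume, via $\mathrm{vol}(K_0(\Ia_0)) = [K_0:K_0(\Ia_0)]^{-1}\mathrm{vol}(K_0)$, so that $[K_0:K_0(\Ia_0)]$ appears as an overall factor.

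Finally, the Euler--Poincar\'e measure differs from the archimedean Haar measure induced by the Tamagawa normalisation by an explicit constant, coming from Serre's Euler--Poincar\'e measures on $\SL_2(\bbR)$ at the $s$ split real places and on $\SL_1(\bbH) \cong \SU(2)$ at the $r$ ramified real places; collecting the signs and powers of $2$ and $\pi$ yields the constant $(-2)^s(4\pi^2)^{-[F:\bbQ]}$. Assembling all contributions gives the second displayed formula. The equivalence with the first formula is then a purely analytic step: the functional equation of the Dedekind zeta function of the totally real field $F$, evaluated at $s=2$ and $s=-1$ (using $\Gamma(1)=1$ and $\Gamma(-1/2)=-2\sqrt{\pi}$), gives
\begin{equation*}
   \zeta_F(-1) = (-1)^{[F:\bbQ]}\,2^{-[F:\bbQ]}\,\pi^{-2[F:\bbQ]}\,|\disc_F|^{3/2}\,\zeta_F(2),
\end{equation*}
and substituting this into the first formula, with $[F:\bbQ]=r+s$, reproduces the second.

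I expect the main obstacle to be the bookkeeping of normalisations rather than any single conceptual difficulty: pinning down the exact archimedean Euler--Poincar\'e constants for $\SL_2(\bbR)$ and $\SU(2)$ (including their signs and the powers of $2$ that account for $(-2)^s$), computing the ramified local volume of $\SL_1(\Lambda_{0,v})$ precisely, and keeping all of these consistent with the discriminant normalisation of the Tamagawa measure. A useful sanity check throughout is the case $F = \bbQ$ with $D_0$ ramified exactly at $\{p,\infty\}$ and trivial level, where both formulas must reduce to Eichler's mass $\tfrac{p-1}{24}$.
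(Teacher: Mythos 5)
Your proposal is correct and follows essentially the same route as the paper: Serre's Euler--Poincar\'e measure combined with Harder's Gau\ss--Bonnet theorem, comparison with the Tamagawa measure using the Tamagawa number $\tau(G_0)=1$, the local volume computations of Eichler--Vign\'eras giving $\zeta_F(2)$, $|\disc_F|^{3/2}$ and the factors $\Nr_{F/\bbQ}(\Ip_0)-1$, and finally the functional equation of $\zeta_F$ (with $\Gamma(-1/2)=-2\sqrt{\pi}$) to pass between the two displayed formulas. The only cosmetic difference is that you treat everything uniformly through the class-set decomposition into finitely many quotients $\Gamma_i\backslash X_0$, whereas the paper splits into the case where $G_0$ has strong approximation (one component) and the case where $G_{0,\infty}$ is compact (finitely many points) --- a distinction the paper itself notes is not necessary.
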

\begin{proof}
  Since $F$ is totally real, the functional equation of the zeta function implies
\begin{equation*}
   \zeta_F(2) |\disc_F|^{3/2} (2\pi^2)^{-[F:\bbQ]} = (-1)^{[F:\bbQ]} \zeta_F(-1).
\end{equation*}
So, the first equality is an immediate consequence of the second.

For simplicity we write 
\begin{equation*}
  S_0(\Ia_0) := \bsl{K_{0,\infty} K_0(\Ia_0)}{G_0(\bbA_F)}/G_0(F).
\end{equation*}
We will distinguish whether $G_0$ has strong approximation or not. This is not absolutely necessary, but it stresses the difference of these two cases.

If $G_0$ has strong approximation, then $G_{0,\infty}$ is not compact and $S_0(\Ia_0)$ is
homeomorphic to the locally symmetric space $X_0/\Gamma_0(\Ia_0)$, where $X_0 := \bsl{K_{0,\infty}}{G_{0,\infty}}$.
The Euler-Poincar\'e measure $\vol_\chi$ (in the sense of Serre cf. \cite{Serre1971}) on $G_{0,\infty}$ is given by 
\begin{equation*}
\vol_\chi =  (-2)^s (4\pi^2)^{-[F:\bbQ]} \vol_T,
\end{equation*} 
 where $\vol_T$ denotes the Tamagawa measure
on $G_{0,\infty}$ as defined in \cite[p.54]{Vigneras1980} or \cite[p.242]{MaclachlanReid2003}.
Using strong approximation and the assumption that $\Gamma(\Ia_0)$ (and hence $\Gamma_0(\Ia_0)$) is torsion-free, we find
\begin{align*}
 \chi(S_0(\Ia_0)) &= \chi(\Gamma_0(\Ia_0)) = \vol_\chi(G_{0,\infty}/\Gamma_0(\Ia_0))\\
                 &= (-2)^s(4\pi^2)^{-[F:\bbQ]} \vol_T(\bsl{K_0(\Ia_0)}{G_0(\bbA_F)}/G_0(F))\\
                 &= (-2)^s(4\pi^2)^{-[F:\bbQ]} \vol_T(K_0(\Ia_0))^{-1}\\
                 &= (-2)^s(4\pi^2)^{-[F:\bbQ]} [K_0:K_0(\Ia_0)] \vol_T(K_0)^{-1}.
\end{align*}
Here we used that the Tamagawa number $\vol_T(G_0(\bbA_F)/G_0(F))$ is one (cf. \cite[2.3, p.71]{Vigneras1980} or \cite[Thm. 7.6.3]{MaclachlanReid2003}).
It is known that
 \begin{equation*}
  \vol_T(K_0)^{-1} = \zeta_F(2) |\disc_F|^{3/2} \prod_{\Ip_0 \in \Ram_f(D_0)} (\Nr_{F/\bbQ}(\Ip_0)-1),
 \end{equation*} 
  the reader may consult Vign\'eras' book \cite[p.55]{Vigneras1980}.

Assume now, that $G_{0,\infty}$ is compact, i.e. $r= [F:\bbQ]$.
In this case $G_0(\bbA_F)$ is a finite union
\begin{equation*}
    G_0(\bbA_F) = \bigsqcup_{i=1}^m G_{0,\infty}K_0(\Ia_0)x_i G_0(F)
\end{equation*}
for some $x_1, \dots, x_m \in G_0(\bbA_F)$. Note, that the assumption that $\Gamma(\Ia_0)$ is torsion-free implies that 
$G_{0,\infty} K_0(\Ia_0)$ acts freely on $G_0(\bbA_F)/G_0(F)$.
Further, 
$S_0(\Ia_0)$ consists precisely of $m$ points, so $\chi(S_0(\Ia_0)) = m$ and we only have to compute 
this number. As before,
\begin{align*}
   m = \vol_T(S_0(\Ia_0)) &= \vol_T(G_{0,\infty})^{-1}\vol_T(K_0(\Ia_0))^{-1} \\
   &=(4\pi^2)^{-r} [K_0:K_0(\Ia_0)] \vol_T(K_0)^{-1},
\end{align*}
hence the claim follows.
\end{proof}

\begin{corollary}\label{corEulerCharacteristic}
  Let $K_f \subseteq G_0(\bbA_{F,f})$ be an open compact subgroup, which has the same invariant volume as $K_0(\Ia_0)$, this means 
  $\vol_T(K_f) = \vol_T(K_0(\Ia_0))$.
  If, moreover, $K_{0,\infty} K_f$ acts freely on $G_0(\bbA_F)/G_0(F)$, then
  the formulas of Theorem \ref{ThmEulerCharacteristic} also hold for the Euler characteristic
   \begin{equation*}
       \chi\bigl(\bsl{K_{0,\infty} K_f}{G_0(\bbA_F)}/G_0(F)\bigr). 
   \end{equation*}
\end{corollary}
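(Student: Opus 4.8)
The plan is to re-run the computation of Theorem~\ref{ThmEulerCharacteristic} verbatim, with the congruence subgroup $K_0(\Ia_0)$ replaced by the given open compact subgroup $K_f$. The crucial observation is that in that proof the Euler characteristic was expressed \emph{only} through the Tamagawa volume $\vol_T(K_0(\Ia_0))^{-1}$ together with data that do not depend on the choice of finite-level subgroup at all: the Euler--Poincar\'e factor $(-2)^s(4\pi^2)^{-[F:\bbQ]}$ relating $\vol_\chi$ to $\vol_T$ on $G_{0,\infty}$, the equality of the Tamagawa number $\vol_T(G_0(\bbA_F)/G_0(F))$ with $1$, and (in the anisotropic case) the constant $\vol_T(G_{0,\infty}) = (4\pi^2)^r$. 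Hence, once the analogous topological description of the double coset space is available for $K_f$, the hypothesis $\vol_T(K_f) = \vol_T(K_0(\Ia_0))$ forces the Euler characteristic to take exactly the same value.

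Concretely, I would split into the same two cases. If $G_0$ has strong approximation, then $G_{0,\infty}$ is non-compact and strong approximation gives $G_0(\bbA_F) = G_{0,\infty}K_f G_0(F)$, so $\bsl{K_{0,\infty}K_f}{G_0(\bbA_F)}/G_0(F)$ is connected and homeomorphic to the locally symmetric space $X_0/\Gamma_f$ with $\Gamma_f := G_0(F)\cap K_f$ and $X_0 = \bsl{K_{0,\infty}}{G_{0,\infty}}$. The hypothesis that $K_{0,\infty}K_f$ acts freely guarantees that $\Gamma_f$ is torsion-free, so $\chi = \vol_\chi(G_{0,\infty}/\Gamma_f)$; passing to the Tamagawa measure and using the fibre-bundle identity $\vol_T(\bsl{K_f}{G_0(\bbA_F)}/G_0(F)) = \vol_T(K_f)^{-1}$ (valid because $K_f$ acts freely and the Tamagawa number is $1$) yields $\chi = (-2)^s(4\pi^2)^{-[F:\bbQ]}\vol_T(K_f)^{-1}$. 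If instead $G_{0,\infty}$ is compact, the double coset space is a finite set of points whose cardinality equals $\vol_T(G_{0,\infty})^{-1}\vol_T(K_f)^{-1} = (4\pi^2)^{-r}\vol_T(K_f)^{-1}$, exactly as before. In both cases, substituting $\vol_T(K_f) = \vol_T(K_0(\Ia_0))$ and rewriting $\vol_T(K_0(\Ia_0))^{-1} = [K_0:K_0(\Ia_0)]\vol_T(K_0)^{-1}$ reproduces the two formulas of Theorem~\ref{ThmEulerCharacteristic}.

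I do not expect a serious obstacle, since the proof of Theorem~\ref{ThmEulerCharacteristic} was already measure-theoretic and factored through $\vol_T(K_0(\Ia_0))$ alone. The only points requiring genuine care are: (i) verifying that the freeness hypothesis on $K_{0,\infty}K_f$ is precisely what replaces the torsion-freeness of $\Gamma(\Ia_0)$ invoked in Theorem~\ref{ThmEulerCharacteristic} — it ensures both that the relevant quotient is a manifold (resp. that the finite quotient consists of distinct points) so that $\vol_\chi$ computes $\chi$, and that the fibre-bundle volume argument applies; and (ii) confirming that the strong approximation identification of the connected double coset space with $X_0/\Gamma_f$ goes through for an arbitrary $K_f$ in place of the specific $K_0(\Ia_0)$. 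Both are routine, so the corollary follows at once.
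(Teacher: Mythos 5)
Your proposal is correct and is essentially the paper's own argument: the paper's proof of Corollary~\ref{corEulerCharacteristic} simply observes that the only properties of $K_0(\Ia_0)$ used in proving Theorem~\ref{ThmEulerCharacteristic} are the freeness of the $K_{0,\infty}K_0(\Ia_0)$-action on $G_0(\bbA_F)/G_0(F)$ and the Tamagawa volume $\vol_T(K_0(\Ia_0))$, which is exactly the observation you make and then carry out in detail. Your added verifications (freeness of the action forcing $\Gamma_f = G_0(F)\cap K_f$ to be torsion-free, and strong approximation identifying the double coset space with $X_0/\Gamma_f$) are sound and merely spell out what the paper leaves implicit.
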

\begin{proof}
   The only two important assumptions on $K_0(\Ia_0)$ that we used in the proof of Theorem \ref{ThmEulerCharacteristic} is that
   $K_{0,\infty} K_0(\Ia_0)$ acts freely on $G_0(\bbA_F)/G_0(F)$ and the formula for the volume of $K_0(\Ia_0)$ with 
   respect to the Tamagawa measure.
\end{proof}

\subsection{The Lefschetz number}

In this section we finally compute the Lefschetz number $\calL(\sigma,\Gamma(\Ia_0))$ of $\sigma$ on the locally symmetric space $X/\Gamma(\Ia_0) \cong S(\Ia_0)$.
Recall the following theorem
\begin{theorem}
  If $\Gamma(\Ia_0)$ is torsion-free, then
   \begin{equation*}
       \calL(\sigma,\Gamma(\Ia_0)) = \chi(S(\Ia_0)^\sigma).
   \end{equation*}
\end{theorem}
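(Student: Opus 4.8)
The plan is to reduce the statement to the simplicial (Hopf trace) form of the Lefschetz fixed point theorem. First I would realize $S(\Ia_0) \cong X/\Gamma(\Ia_0)$ as a finite-type space carrying a simplicial involution induced by $\sigma$: since $\Gamma(\Ia_0)$ is torsion-free and of type (FL), and $\sigma$ is an isometry of finite order of $X/\Gamma(\Ia_0)$, one may choose a $\sigma$-equivariant smooth triangulation of the underlying manifold—using the Borel--Serre bordification when $X/\Gamma(\Ia_0)$ fails to be compact—so that in either case one works with a finite $\sigma$-simplicial complex $K$ whose cohomology computes $H^*(\Gamma(\Ia_0),\bbC)$ and on which $\sigma$ acts simplicially.

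Second, I would pass to a barycentric subdivision $\mathrm{sd}(K)$ in order to make the involution \emph{admissible}: a standard fact about finite group actions on simplicial complexes is that, after one barycentric subdivision, any simplex fixed set-wise by $\sigma$ is already fixed point-wise. This guarantees that the fixed subcomplex $(\mathrm{sd}\,K)^\sigma$ is a genuine subcomplex realizing the fixed point set $S(\Ia_0)^\sigma$, and that $\sigma$ restricts to the identity on it.

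Third, I would invoke the Hopf trace formula. Because the Lefschetz number is a chain-homotopy invariant, it may be computed on the simplicial chain complex:
\begin{equation*}
   \calL(\sigma,\Gamma(\Ia_0)) \;=\; \sum_q (-1)^q \tr\bigl(\sigma_q \mid H^q\bigr) \;=\; \sum_q (-1)^q \tr\bigl(\sigma_* \mid C_q(\mathrm{sd}\,K)\bigr).
\end{equation*}
A $q$-simplex contributes to the trace on $C_q$ precisely when it is fixed set-wise, and by admissibility such a simplex is fixed point-wise; hence $\sigma_*$ acts on its oriented chain by $+1$. Therefore $\tr(\sigma_* \mid C_q)$ equals the number of $q$-simplices of the fixed subcomplex, and summing with signs yields $\calL(\sigma,\Gamma(\Ia_0)) = \chi\bigl((\mathrm{sd}\,K)^\sigma\bigr) = \chi\bigl(S(\Ia_0)^\sigma\bigr)$.

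The main obstacle is the case in which $X/\Gamma(\Ia_0)$ is non-compact: one must then justify that the Borel--Serre bordification carries a $\sigma$-action extending the given one, that its cohomology still computes $H^*(\Gamma(\Ia_0),\bbC)$, and that passing to the compactification alters neither the Lefschetz number nor the Euler characteristic of the fixed point set—the latter because $\sigma$ acts suitably on the boundary strata. In the division-algebra situation of present interest the quotient $X/\Gamma(\Ia_0)$ is already compact, so the classical simplicial argument applies verbatim and this difficulty does not arise.
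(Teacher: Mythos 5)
Your argument in the compact case is correct, and it is the classical proof of this fixed point principle: an equivariant smooth triangulation (Illman), one barycentric subdivision so that a simplex fixed set-wise is fixed point-wise, and the Hopf trace formula, which identifies $\sum_q (-1)^q \tr(\sigma_q)$ with the signed count of fixed simplices, i.e.\ with $\chi(S(\Ia_0)^\sigma)$. This is more self-contained than what the paper does, since the paper simply cites Rohlfs (Prop.\ 1.9 of the reference given there) and an alternative reference of Kionke rather than giving an argument.

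The genuine gap is in your last paragraph. The theorem is stated under the standing assumptions of Section \ref{sectLefschetz} only, namely that $G$ has strong approximation; this does \emph{not} force $S(\Ia_0)$ to be compact. Compactness holds precisely when $D$ is a division algebra, but the split case $D_0 = M_2(\bbQ)$, $D = M_2(E)$ is allowed and is exactly the case the paper needs later: in Section \ref{sectBianchi} the Lefschetz number formula of Theorem \ref{thmLefschetzNumber} --- whose proof rests on the present fixed point theorem --- is applied to Bianchi groups, whose quotients are non-compact. So the non-compact case cannot be dismissed as outside ``the situation of present interest''. Nor is it a routine extension: for non-compact spaces the Lefschetz fixed point principle is false in general (a translation of $\bbR$ has Lefschetz number $1$ and empty fixed point set), so the whole content of the theorem in that case is precisely what you defer --- that $\sigma$ extends to the Borel--Serre bordification, that this bordification is a compact space (a manifold with corners) whose cohomology still computes $H^*(\Gamma(\Ia_0),\bbC)$, and, the step requiring real work, that the fixed point set of the extended involution on the bordification has the same Euler characteristic as $S(\Ia_0)^\sigma$, which depends on controlling how $\sigma$ acts on the boundary strata at the cusps. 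As written, your proof establishes the theorem only when $D$ is a division algebra, which suffices for the Main Theorem of Section \ref{sectHyperbolicApplications} but not for the statement as used in the paper.
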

This kind of Lefschetz fixed point principle has been observed by many people. In the context of arithmetic groups this theorem
is due to Rohlfs (see, for instance, \cite[Prop. 1.9]{Rohlfs1981}).
The theorem can be proven either by adapting the proof of 1.9 in \cite{Rohlfs1981} or
by an application of Cor. 7.15 in \cite{Kionke2012}. 

\begin{definition}
   We say that the extension $E/F$ of number fields is \emph{unramified over $2$},
   if for every pair of finite places $v \in V_f(F)$, $w \in V_f(E)$ with $w | v$ and $v| 2$
   the extension $E_w / F_v$ is unramified.
\end{definition}

To shorten the notation we define $\Delta(D_0) := \prod_{\Ip_0 \in \Ram_f(D_0)} (\Nr_{F/\bbQ}(\Ip_0)-1)$ and we write
 $d = [F:\bbQ]$.

\begin{theorem}\label{thmLefschetzNumber}
Suppose that $G$ has strong approximation.
Let $\Ia_0 \subseteq \cO_F$ be a non-trivial ideal such that $\Gamma(\Ia_0)$ is torsion-free.
The sign of the Lefschetz number $\calL(\sigma,\Gamma(\Ia_0))$ is $(-1)^s$ where $s$ is the number of real places of $F$ which split $D_0$.
Moreover, the Lefschetz number can be bounded from below by
\begin{equation*}
      |\calL(\sigma,\Gamma(\Ia_0))| \geq 2^{c+\rho(\Ia_0)-r -d} \pi^{-2d}\zeta_F(2)|\disc_F|^{3/2} \Delta(D_0) [K_0:K_0(\Ia_0)].
\end{equation*}
If $E/F$ is unramified over $2$, there is the exact formula
\begin{equation*}
 \calL(\sigma,\Gamma(\Ia_0)) = (-1)^s 2^{c+\rho(\Ia_0)-r -d} \pi^{-2d}\zeta_F(2)|\disc_F|^{3/2} \Delta(D_0) [K_0:K_0(\Ia_0)].
\end{equation*}
The numbers $c$ and $\rho(\Ia_0)$ are defined as in Corollary \ref{corSizeOfH1}.
\end{theorem}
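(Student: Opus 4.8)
The plan is to convert the Lefschetz number into an Euler characteristic and then evaluate it componentwise, using the fixed point decomposition together with the volume computation of Theorem \ref{ThmEulerCharacteristic}. First I would apply the Lefschetz fixed point principle stated just above to write $\calL(\sigma,\Gamma(\Ia_0)) = \chi(S(\Ia_0)^\sigma)$, and then feed in the topologically disjoint decomposition \eqref{eqFixedPointDecomposition}. Since the Euler characteristic is additive over a finite disjoint union of open-and-closed pieces, this gives
\[
   \calL(\sigma,\Gamma(\Ia_0)) \;=\; \sum_{\eta \in \calH^1(\Ia_0)} \chi\bigl(\vartheta^{-1}(\eta)\bigr),
\]
so that the whole problem splits into evaluating each summand and counting the index set, the latter already being furnished by Corollary \ref{corSizeOfH1}.

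Next I would analyse a single component. Fixing $\eta$ with representatives $(k,\gamma)$ and $a$ chosen so that $a \in G_\infty$ and $\gamma \in \Gamma(\Ia_0)$, the homeomorphism of \ref{sectStructureFixedPoints} identifies $\vartheta^{-1}(\eta)$ with a double coset space for the twisted group $G(\gamma)$. By Remark \ref{remFixedPointCompOfSameType} this group is $F$-isomorphic to $G_0$, and since $a$ lies at the archimedean places the finite part of the relevant open compact subgroup is simply $K_f(\eta) := (K(\Ia_0))(k)$, the fixed points of the $k$-twisted action. Transporting along $G(\gamma) \cong G_0$ thus presents $\vartheta^{-1}(\eta)$ as a space of the type treated in Corollary \ref{corEulerCharacteristic}, with $K_{0,\infty}$ a maximal compact of $G_{0,\infty}$ and $K_f(\eta)$ an open compact of $G_0(\bbA_{F,f})$ on which $K_{0,\infty}K_f(\eta)$ acts freely (because $\Gamma(\Ia_0)$ is torsion-free). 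Crucially, the sign of the resulting Euler characteristic depends only on the real Lie group $G_{0,\infty} \cong \SL_2(\bbR)^s \times \SL_1(\bbH)^r$, so every summand carries the sign $(-1)^s$; this establishes the sign assertion and, more importantly, shows there is no cancellation, reducing the remaining task to estimating the positive numbers $|\chi(\vartheta^{-1}(\eta))|$.

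For the size I would use the computation isolated in Theorem \ref{ThmEulerCharacteristic} and Corollary \ref{corEulerCharacteristic}, according to which
\[
  |\chi(\vartheta^{-1}(\eta))| \;=\; 2^{-r-d}\pi^{-2d}\,\vol_T\bigl(K_f(\eta)\bigr)^{-1},
\]
after simplifying $(-2)^s(4\pi^2)^{-d} = (-1)^s 2^{-r-d}\pi^{-2d}$ by means of $r+s=d$; here $\vol_T(K_0)^{-1} = \zeta_F(2)|\disc_F|^{3/2}\Delta(D_0)$ supplies the global factors once $\vol_T(K_f(\eta))$ is expanded relative to $K_0$. Everything then rests on a place-by-place evaluation of $\vol_T(K_f(\eta))$, which is exactly the purpose of the appendix results Lemma \ref{lemLocalUnramified2} and Lemma \ref{lemLocalRamified2}. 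At the places that are split, inert, or ramified over an odd prime these lemmas pin down the local fixed point subgroup and show that its volume agrees with that of $K_{0,v}(\Ia_0)$, independently of the chosen local class. Consequently, away from the bad set one has $\vol_T(K_f(\eta)) = \vol_T(K_0(\Ia_0))$, each component contributes the clean value $2^{-r-d}\pi^{-2d}\zeta_F(2)|\disc_F|^{3/2}\Delta(D_0)[K_0:K_0(\Ia_0)]$, and multiplying by the $2^{c+\rho(\Ia_0)}$ components arising from the archimedean and odd-ramified places (Corollary \ref{corSizeOfH1}) yields the stated prefactor $2^{c+\rho(\Ia_0)-r-d}$. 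When $E/F$ is unramified over $2$ the bad set is empty and $|H^1(\sigma,K(\Ia_0,2))| = 1$, so the computation is exact.

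The main obstacle is the set $S$ of finite places dividing $2$ and ramified in $E$, where $H^1(\sigma,G(\cO_{F,v}))$ has no transparent description and neither the number of local classes nor the exact local fixed point volume is accessible. I would surmount this in two steps: establish the one-sided estimate $\vol_T(K_f(\eta)) \leq \vol_T(K_0(\Ia_0))$ at the places of $S$, so that $|\chi(\vartheta^{-1}(\eta))|$ is at least the clean value for every component, and then discard the unknown multiplicity $|H^1(\sigma,K(\Ia_0,2))| \geq 1$ in the count of components. Summing the $2^{c+\rho(\Ia_0)}|H^1(\sigma,K(\Ia_0,2))|$ nonnegative contributions then delivers the asserted lower bound. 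Verifying the local volume inequality at the $2$-adic ramified places is the genuinely delicate point, and it is precisely this difficulty that forces the exact formula to be stated only under the unramified-over-$2$ hypothesis.
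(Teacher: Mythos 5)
Your overall skeleton is the same as the paper's: Lefschetz number equals the Euler characteristic of the fixed point set, additivity over the decomposition \eqref{eqFixedPointDecomposition}, the same-sign observation from Remark \ref{remFixedPointCompOfSameType}, the volume formula of Theorem \ref{ThmEulerCharacteristic}/Corollary \ref{corEulerCharacteristic}, and the transport of each component to a $G_0$-double-coset space via Hilbert 90 with place-by-place unimodularity checks at the split, inert and odd ramified places. The constant bookkeeping ($(-2)^s(4\pi^2)^{-d}=(-1)^s2^{-r-d}\pi^{-2d}$, $\vol_T(K_0)^{-1}=\zeta_F(2)|\disc_F|^{3/2}\Delta(D_0)$) is also correct.

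However, there is a genuine gap at exactly the point you flag as "delicate": your lower bound rests on the inequality $\vol_T(K_f(\eta)) \leq \vol_T(K_0(\Ia_0))$ for \emph{every} class $\eta$, including those whose restriction to $H^1(\sigma,K(\Ia_0,2))$ is nontrivial, and you give no argument for it. At the dyadic ramified places the cohomology $H^1(\sigma,G(\cO_{F,v}))$ admits no transparent description (cf. Remark \ref{remRohlfsGeneral}), and the fixed-point groups of nontrivially twisted actions there correspond to genuinely different local data whose Tamagawa volumes need not be dominated by $\vol_T(K_{0,v}(\Ia_0))$; nothing in the appendix supports such a one-sided estimate, and the paper never proves one. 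The point you miss is that this inequality is \emph{not needed}: since all components $\vartheta^{-1}(\eta)$ have Euler characteristic of the same sign, one gets a valid lower bound for $|\calL(\sigma,\Gamma(\Ia_0))|$ by simply \emph{discarding} the components one cannot compute. Concretely, the paper restricts the sum to $T := \{\eta \in \calH^1(\Ia_0) : q(\eta)=1\}$, the fibre of the trivial class under $q:\calH^1(\Ia_0)\to H^1(\sigma,K(\Ia_0,2))$; this set has exactly $2^{c+\rho(\Ia_0)}$ elements by Corollary \ref{corSizeOfH1}. For $\eta \in T$ the cocycle is trivial at the places of $S$ (so one finds $z_v \in K_v(\Ia_0)$ there, and $z_v$ with $\gamma=\pm z_v^{-1}\up{\sigma}z_v$ elsewhere by Corollary \ref{corLocalUnramified1} and Lemma \ref{lemLocalRamified2}), whence $\vol_T(K_f)=\vol_T(K_0(\Ia_0))$ exactly and each such component contributes the clean value. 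Summing over $T$ alone gives the asserted bound, and when $E/F$ is unramified over $2$ one has $T=\calH^1(\Ia_0)$, giving the exact formula. So your proposal, as written, leaves its central step unproven, whereas the correct route replaces that step by the cheaper "drop the bad components" argument.
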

\begin{proof}
  The Euler characteristic is additive for topologically disjoint unions, so 
 \begin{equation*}
   \calL(\sigma,\Gamma(\Ia_0)) = \sum_{\eta \in \calH^1(\Ia_0)} \chi(\vartheta^{-1}(\eta)).
 \end{equation*}
 As pointed out in Remark \ref{remFixedPointCompOfSameType} the sign of the Euler characteristic $\chi(\vartheta^{-1}(\eta))$ is the same
  for all the components $\vartheta^{-1}(\eta)$ . Thus, to obtain an estimate for the Lefschetz number, it suffices
  to calculate $\chi(\vartheta^{-1}(\eta))$ for
  all $\eta$ in some chosen subset $T \subseteq \calH^1(\Ia_0)$. Let $q: \calH^1(\Ia_0) \to H^1(\sigma,K(\Ia_0,2))$ denote the canonical map 
  (the definition of $K(\Ia_0,2)$ can be found in the paragraph preceding Corollary \ref{corSizeOfH1}).
 Define 
\begin{equation*}
T:= \{\: \eta \in \calH^1(\Ia_0) \:|\: q(\eta) = 1 \:\}.
\end{equation*}
 From the definition of $K(\Ia_0,2)$ it is clear that
 $T = \calH^1(\Ia_0)$ if $E/F$ is unramified over $2$.
 Let $\eta \in T$ and choose a representative $(k_\infty k , \gamma) \in K_\infty K(\Ia_0) \times G(F)$ with $a \in G(\bbA_F)$ satisfying
   $\up{\sigma}a = k_\infty^{-1}k^{-1}a \gamma$. Using strong approximation, we can choose $\gamma$, $k$ and $a$ such that
  $\gamma \in G(F) \cap  K(\Ia_0) = \Gamma(\Ia_0)$, $k \in K(\Ia_0)$ and
  $a \in G_\infty$. Then $G(\gamma)$ is defined over $\cO_F$. 
  Again Hilbert 90 yields an element $b \in \GL_1(D)$ such that $\gamma = b^{-1}\up{\sigma}b$. The conjugation $\inn(b)$ with $b$ defines an
  $F$-isomorphism $G(\gamma)\times_{\cO_F} F \to G_0 \times_{\cO_F} F$. 
  Define $K_f := \inn(b)(K(\Ia_0)(k))$ which is open compact in $G_0(\bbA_{F,f})$ and define $K'_{0,\infty} := \inn(b)(a^{-1}K_\infty(k_\infty) a)$ 
  which is maximal compact in $G_{0,\infty}$.
  Furthermore, $\inn(b)$ induces a homeomorphism
  \begin{equation*}
      \vartheta^{-1}(\eta) \stackrel{\simeq}{\longrightarrow}
                    \bsl{K'_{0,\infty} K_f}{G_0(\bbA_F)}/G_0(F).
  \end{equation*}
  Note, that $K'_{0,\infty} K_f$ acts freely on $G_0(\bbA_F)/G_0(F)$ due to the assumption that $\Gamma(\Ia_0)$ is torsion-free.
  Eventually, we have to check that $K_f$ has the same invariant volume as $K_0(\Ia_0)$ to use Corollary \ref{corEulerCharacteristic}.

  How to compare these two volumes?  Let $v$ be any finite place of $F$. By the choice of $T$ and the local
  results \ref{corLocalUnramified1} and \ref{lemLocalRamified2}, we find
  $z_v \in G(\cO_{F,v})$ such that $\gamma = \pm z_v^{-1} \up{\sigma}z_v$. Therefore, conjugation
  with $z_v$ yields an isomorphism of topological groups $\inn(z_v): G(\gamma)(F_v) \to G_0(F_v)$ 
  mapping $K_v(\Ia_0)(\gamma)$ to $K_{0,v}(\Ia_0)$. We compose this isomorphism with $\inn(b^{-1})$ obtained before and get
   \begin{equation*}
       \inn(z_vb^{-1}) : G_0(F_v) \to G_0(F_v).
   \end{equation*}
   One can verify that this automorphism is unimodular, using that it is the conjugation by some
   element in the larger group $\GL_1(D_0 \otimes F_v)$.
\end{proof}

\section{Estimates}\label{sectEstimates}

\subsection{} Let $\Ia_0 \subseteq \cO_F$ be a non-trivial ideal. The purpose of this section is to provide simple estimates for
the ratio $[K_0:K_0(\Ia_0)]/\sqrt{[K:K(\Ia_0)]}$.

 Using the smoothness of the scheme $G_0$ we see that
\begin{equation*}
    [K_0 : K_0(\Ia_0)] = \prod_{ v | \Ia_0} |G_0(\cO_{F,v}/\Ia_0\cO_{F,v})|,
\end{equation*}
and similarly smoothness of $G$ yields
 \begin{equation*}
    [K : K(\Ia_0)] = \prod_{ v | \Ia_0} |G(\cO_{F,v}/\Ia_0\cO_{F,v})|.
\end{equation*}
We compare the terms $|G_0(\cO_{F,v}/\Ia_0\cO_{F,v})|$ and $|G(\cO_{F,v}/\Ia_0\cO_{F,v})|$, but we will have
to consider different cases according to the splitting behaviour. 
We choose some prime ideal $\Ip_0$ dividing $\Ia_0$ and take $e$ to be maximal with the property $\Ip_0^e | \Ia_0$.
The finite place of $F$ corresponding to $\Ip_0$ will be denoted $v$.
Define 
\begin{equation*}
 Q(v, \Ia_0) =\frac{|G_0(\cO_{F,v}/\Ia_0\cO_{F,v})|}{\sqrt{|G(\cO_{F,v}/\Ia_0\cO_{F,v})|}}.
\end{equation*}
Moreover, we write $\Nr(\Ip_0) = |\cO_F/\Ip_0|$ for the norm of the prime ideal.

\subsection{Case: $\Ip_0$ splits in $E$} 
Suppose that $\Ip_0$ splits in $E$, then $\Ip_0\cO_E = \IP \IQ$ where $\IP$ and $\IQ$ are distinct prime ideals in $\cO_E$.
In this case 
\begin{equation*}
G(\cO_{F,v}/\Ia_0\cO_{F,v}) = G(\cO_F/\Ip_0^e) = G_0(\cO_E/\IP^e\IQ^e) \cong  G_0(\cO_F/\Ip_0^e) \times G_0(\cO_F/\Ip_0^e).
\end{equation*}
Consequently, $|G(\cO_{F,v}/\Ia_0)| = |G_0(\cO_{F,v}/\Ia_0\cO_{F,v})|^2$ and hence $Q(v, \Ia_0) =1$.

\subsection{Case: $\Ip_0$ is inert in $E$}
Suppose that $\Ip_0$ is inert in $E$, this means $\Ip_0\cO_E = \IP$ is a prime ideal in $\cO_E$. In this case 
the local extension is unramified. According to Lemma \ref{lemLocalCalculationIndex}
we get 
\begin{equation*}
    Q(v, \Ia_0)^2 =  (1-\Nr(\Ip_0)^{-2})(1+\Nr(\Ip_0)^{-2})^{-1}
\end{equation*}
if $D_0$ splits at $\Ip_0$. Whereas, 
\begin{equation*}
   Q(v, \Ia_0)^2 =  (1+\Nr(\Ip_0)^{-1})(1-\Nr(\Ip_0)^{-1})^{-1}
\end{equation*}
if $D_0$ ramifies in $v$. Notice that in the latter case $Q(v,\Ia_0) > 1$.

\subsection{Case: $\Ip_0$ is ramified in $E$}
Assume that $\Ip_0$ is ramified in $E$. In this case $\Ip_0\cO_E = \IP^2$ for some prime ideal $\IP \subset \cO_E$.
The local extension is ramified and we obtain
\begin{equation*}
    Q(v, \Ia_0)^2 =  \begin{cases}
                        1-\Nr(\Ip_0)^{-2}              \:&\: \text{ if $D_0$ splits at $v$} \\
                        1+\Nr(\Ip_0)^{-1}              \:&\: \text{ if $D_0$ ramified at $v$}.
                     \end{cases}
\end{equation*}
Notice that $v \in \Ram_f(D_0)$ implies $Q(v,\Ia_0) > 1$.

\subsection{Results} One can use these three cases to derive a formula 
for the quotient $[K_0:K_0(\Ia_0)]/\sqrt{[K:K(\Ia_0)]}$. However, this will not be important for our purposes.
We content ourselves with the following corollary.
\begin{corollary}\label{corEstimate}
 Let $\Ia_0 \subset \cO_F$ be a non-trivial ideal, then
   \begin{equation*}
    \frac{[K_0:K_0(\Ia_0)]}{\sqrt{[K:K(\Ia_0)]}} \geq \zeta_F(2)^{-1}.
 \end{equation*}
 Suppose that all prime ideals dividing $\Ia_0$ are either split in $E$ or are ramified in $D_0$,
 then
 \begin{equation*}
    \frac{[K_0:K_0(\Ia_0)]}{\sqrt{[K:K(\Ia_0)]}} \geq 1.
 \end{equation*}
\end{corollary}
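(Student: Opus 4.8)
The plan is to turn both inequalities into statements about the single-place factors $Q(v,\Ia_0)$ and then reassemble them into an Euler product. First I would combine the two product expansions recorded at the start of this section, namely $[K_0:K_0(\Ia_0)] = \prod_{v\mid\Ia_0}|G_0(\cO_{F,v}/\Ia_0\cO_{F,v})|$ and its analogue for $[K:K(\Ia_0)]$, to obtain
\[
  \frac{[K_0:K_0(\Ia_0)]}{\sqrt{[K:K(\Ia_0)]}} = \prod_{v\,\mid\,\Ia_0} Q(v,\Ia_0).
\]
Everything then reduces to suitable lower bounds on the local factors, for which the three case computations above supply explicit values.

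The central step is the uniform pointwise estimate $Q(v,\Ia_0) \geq 1 - \Nr(\Ip_0)^{-2}$, valid for every prime $\Ip_0 \mid \Ia_0$; here I write $N := \Nr(\Ip_0)$ and $v$ for the place attached to $\Ip_0$. Whenever $\Ip_0$ splits in $E$ one has $Q(v,\Ia_0)=1$, and whenever $D_0$ ramifies at $v$ one has $Q(v,\Ia_0)>1$, so in all such cases the claim is immediate because $1-N^{-2}\leq 1$. The only cases needing an argument are those where $D_0$ splits at $v$. If $\Ip_0$ is ramified in $E$, then $Q(v,\Ia_0)^2 = 1-N^{-2} \geq (1-N^{-2})^2$, as wanted. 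The remaining and tightest case is $\Ip_0$ inert in $E$ with $D_0$ split at $v$, where $Q(v,\Ia_0)^2 = (1-N^{-2})(1+N^{-2})^{-1}$; the desired inequality $Q(v,\Ia_0)^2 \geq (1-N^{-2})^2$ reduces, after cancelling the positive factor $1-N^{-2}$, to $1 \geq (1-N^{-2})(1+N^{-2}) = 1-N^{-4}$, which plainly holds. This last verification is the only genuine computation in the argument.

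To finish the first inequality I would take the product over all $v\mid\Ia_0$ and then, using $0 < 1-N^{-2} \leq 1$, enlarge the product to range over \emph{all} finite places of $F$, recognizing the Euler expansion of $\zeta_F(2)^{-1}$:
\[
  \prod_{v\,\mid\,\Ia_0} Q(v,\Ia_0) \;\geq\; \prod_{v\,\mid\,\Ia_0}\bigl(1-\Nr(\Ip_0)^{-2}\bigr) \;\geq\; \prod_{\Ip_0}\bigl(1-\Nr(\Ip_0)^{-2}\bigr) \;=\; \zeta_F(2)^{-1}.
\]
For the second statement, the hypothesis forces every prime dividing $\Ia_0$ either to split in $E$ (whence $Q(v,\Ia_0)=1$) or to ramify in $D_0$ (whence $Q(v,\Ia_0)>1$), so every local factor is at least $1$ and the product is $\geq 1$. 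I do not expect any serious obstacle: the proof is essentially a bookkeeping of the local cases tabulated above, the sole substantive point being the inert-split inequality, which is exactly where the factor $Q(v,\Ia_0)$ is smallest.
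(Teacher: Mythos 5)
Your proof is correct and takes essentially the same route as the paper's: both reduce to the local factors $Q(v,\Ia_0)$ computed case by case in this section and then compare with the Euler product $\zeta_F(2)^{-1}=\prod_{\Ip_0}\bigl(1-\Nr(\Ip_0)^{-2}\bigr)$. The only difference is cosmetic: the paper bounds the square $\prod_{v}Q(v,\Ia_0)^2$ from below, handling the inert factors $(1+\Nr(\Ip_0)^{-2})^{-1}$ by a second copy of $\zeta_F(2)^{-1}$, whereas you absorb them via the pointwise inequality $(1+\Nr(\Ip_0)^{-2})^{-1}\geq 1-\Nr(\Ip_0)^{-2}$ and use a single Euler product, arriving at the same bound.
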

\begin{proof}
The second assertion follows directly from what we have seen before.
To prove the first, we start with an estimation replacing all terms that are at least one by terms which are smaller than one.
One obtains
\begin{align*}
  \frac{[K_0:K_0(\Ia_0)]^2}{[K:K(\Ia_0)]} & = \prod_{ v | \Ia_0} Q(v,\Ia_0)^2 \\
   &\geq \prod_{\substack{\Ip_0 | \Ia_0 \\ \Ip_0 \text{ inert}}} (1-\Nr(\Ip_0)^{-2})(1+\Nr(\Ip_0)^{-2})^{-1}
   \prod_{\substack{\Ip_0 | \Ia_0 \\ \Ip_0 \text{ ramified}}}   (1-\Nr(\Ip_0)^{-2}) \\
   & \geq \zeta_F(2)^{-1} \prod_{\substack{\Ip_0 | \Ia_0 \\ \Ip_0 \text{ inert}}}(1+\Nr(\Ip_0)^{-2})^{-1} \\
   & \geq \zeta_F(2)^{-1} \prod_{\Ip_0}(1+\Nr(\Ip_0)^{-2}+\Nr(\Ip_0)^{-4} + \dots)^{-1}  \: \geq \zeta_F(2)^{-2}.
\qedhere
\end{align*}
\end{proof}

\section{Application to Hyperbolic 3-Manifolds}\label{sectHyperbolicApplications}

\subsection{Assumptions}\label{secSetting}
For this section we fix the following assumptions. As before $F$ is a totally real number field, and we define $d= [F:\bbQ]$.
Choose once and for all a real place $v_0$ of $F$. Let $E/F$ be a quadratic extension such that $E$ has precisely one complex place $w_0$, further
assume $w_0 | v_0$. Moreover, let $D_0$ be an $F$ quaternion division algebra such that $V_\infty(F) \setminus \{v_0\} \subseteq \Ram_\infty(D_0)$.
This means $D_0$ is ramified in every real place of $F$ except possibly $v_0$. Then $D := D_0 \otimes_F E$ 
satisfies $\Ram_\infty(D) = V_\infty(E)\setminus\{w_0\}$. We will assume that $D$ is a division algebra. This assumption is implied by
the previous assumptions if $d= [F:\bbQ]$ is at least two.

 The number $s$ of real places of $F$ that split $D_0$ is $s= 0$ if $v_0 \in \Ram_\infty(D_0)$ and otherwise $s=1$.
As always $r = d - s$ and therefore the number $c$ of places in $\Ram_\infty(D_0)$ that are divided by a complex place in $E$ is exactly $c=1-s$.

\subsection{}
The real Lie group $G_\infty$ is isomorphic to
\begin{equation*}
    G_\infty \cong \SL_2(\bbC) \times \SL_1(\bbH)^{2(d-1)}.
\end{equation*}
 The group scheme $G$ has strong approximation, since the group $G_\infty$ is not compact.
 Given a non-trivial ideal $\Ia_0 \subset \cO_F$, the group $\Gamma(\Ia_0)$
 embeds discretely into $G_\infty$. The assumption that $D$ is a division algebra implies that 
 $\Gamma(\Ia_0)$ is cocompact in $G_\infty$ (cf. Thm. 8.2.3 in \cite{MaclachlanReid2003} or more general \cite[Thm. 8.4]{Borel1969} ).
 Moreover, the projection $G_\infty \to \SL_2(\bbC)$ is proper and open, thus
 $\Gamma(\Ia_0)$ projects isomorphically to a discrete cocompact subgroup of $\SL_2(\bbC)$. Fix a
 maximal compact and $\sigma$-stable subgroup $K_\infty \subseteq G_\infty$.
 The symmetric space $X:= \bsl{K_\infty}{G_\infty}$ is isomorphic to hyperbolic three space $\Hyp$.
 Suppose that $\Gamma(\Ia_0)$ is torsion-free, then $\Gamma(\Ia_0)$ is a cocompact Kleinian group
 and $X/\Gamma(\Ia_0) \cong \Hyp/\Gamma(\Ia_0)$ is a compact orientable hyperbolic manifold.

\subsection{A general remark} Let $M$ be a closed connected smooth oriented manifold, say of odd dimension $\dim(M) = n = 2m+1$.
Let $\tau: M \to M$ be a smooth automorphism of $M$ of such that $\tau^2 = \id_M$. 
We consider the de Rham cohomology of $M$ with complex
coefficients and the non-degenerate Poincar\'e pairing
\begin{equation*}
  \langle\cdot,\cdot\rangle: H^j(M, \bbC) \times H^{n-j}(M, \bbC) \to \bbC
\end{equation*}
for $0\leq j \leq n$. Let $\tau_j: H^j(M, \bbC)\to H^j(M, \bbC)$ denote
the induced automorphism in the cohomology in degree $j$. 
For classes $\alpha \in H^j(M, \bbC)$ and $\beta \in H^{n-j}(M, \bbC)$
we have $\langle\tau_j(\alpha),\tau_{n-j}(\beta)\rangle = \epsilon \langle\alpha,\beta\rangle$,
with $\epsilon=1$ if $\tau$ is orientation preserving and $\epsilon = -1$ otherwise.
Let $H^j(M, \bbC) = H^j_{1} \oplus H^j_{-1}$ be the eigenspace decomposition with respect to $\tau_j$.

If $\tau$ preserves orientation, then $H^j_{1} \perp H^{n-j}_{-1}$ and $H^j_{-1} \perp H^{n-j}_{1}$ for all $0\leq j \leq n$.
Consequently, $\dim(H^j_{1}) = \dim(H^{n-j}_{1})$ and $\dim(H^j_{-1}) = \dim(H^{n-j}_{-1})$. Under the assumption that $\dim(M) = n$ is odd, this implies that the
 Lefschetz number $\calL(\tau,M)$ vanishes.
In particular, we deduce: $\calL(\tau,M) \neq 0$ implies that $\tau$ is not orientation preserving. 

Assume now that $\tau$ changes the orientation. In this case $H^j_{1} \perp H^{n-j}_{1}$ and $H^j_{-1} \perp H^{n-j}_{-1}$ and
 therefore $\dim(H^j_{1}) = \dim(H^{n-j}_{-1})$ and $\dim(H^j_{-1}) = \dim(H^{n-j}_{1})$.
Consequently, the following formula gives the Lefschetz number of $\tau$
\begin{equation*}
  \calL(\tau, M) = 2\sum_{j=0}^m (-1)^j (\dim(H^j_1)-\dim(H^j_{-1})),
\end{equation*}
where $m = (\dim(M)-1)/2$.
Specializing to the case $\dim(M) = 3$ we obtain
\begin{equation}\label{eqLefschetzOrient}
  \calL(\tau, M) = 2 - 2\dim(H^1_1) + 2\dim(H^1_{-1}).
\end{equation}

\subsection{A lower bound for the first Betti number}
 We go back to the setting introduced in \ref{secSetting}.
Let $\Ia_0 \subseteq \cO_F$ be a non-trivial ideal such that $\Gamma(\Ia_0)$ is torsion-free.
Recall that we defined 
\begin{equation*}
 \rho(\Ia_0) := |\{\:\Ip_0 \subset \cO_F \:|\:\Ip_0 \text{ prime ideal ramified in $E$ and } \: \Ip_0 \nmid 2\Ia_0 \:\}|.
\end{equation*}
Theorem \ref{thmLefschetzNumber} yields
\begin{equation} \label{eqLefschetzSpecial}
   |\calL(\sigma, \Hyp/\Gamma(\Ia_0))| \geq 2^{1+\rho(\Ia_0)} (2\pi)^{-2d} \zeta_F(2)|\disc_F|^{3/2} \Delta(D_0)[K_0:K_0(\Ia_0)].
\end{equation}
Moreover, the Lefschetz number $\calL(\sigma, \Hyp/\Gamma(\Ia_0))$ is negative if $s = 1$ and positive otherwise.
Clearly, the Lefschetz number is not zero and we deduce that $\sigma$ changes the orientation on $\Hyp/\Gamma(\Ia_0)$.
We use this to estimate the size of the first Betti number.
\begin{theorem}\label{thmH1Estimate}
   In the notation introduced above
   \begin{equation*}
      \dim(H^1(\Gamma(\Ia_0), \bbC)) \geq 2^{\rho(\Ia_0)} (2\pi)^{-2d} \zeta_F(2)|\disc_F|^{3/2} \Delta(D_0)[K_0:K_0(\Ia_0)] + (-1)^{s+1}.
   \end{equation*}
\end{theorem}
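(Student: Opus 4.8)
The plan is to read off the first Betti number from the eigenspace decomposition of $H^1$ under $\sigma$, and to combine the Lefschetz formula \eqref{eqLefschetzOrient} with the lower bound \eqref{eqLefschetzSpecial} and the sign of the Lefschetz number. The essential input has already been assembled: the non-vanishing of $\calL(\sigma,\Hyp/\Gamma(\Ia_0))$ forces $\sigma$ to reverse orientation on the closed oriented $3$-manifold $M := \Hyp/\Gamma(\Ia_0)$, so that \eqref{eqLefschetzOrient} applies and gives
\[
  \calL(\sigma, M) = 2 - 2\dim(H^1_1) + 2\dim(H^1_{-1}),
\]
where $H^1(M,\bbC) = H^1_1 \oplus H^1_{-1}$ is the decomposition into the $(\pm 1)$-eigenspaces of the involution induced by $\sigma$ in degree one.

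First I would abbreviate $B := 2^{\rho(\Ia_0)}(2\pi)^{-2d}\zeta_F(2)|\disc_F|^{3/2}\Delta(D_0)[K_0:K_0(\Ia_0)]$, so that \eqref{eqLefschetzSpecial} reads $|\calL(\sigma,M)| \geq 2B$, and set $p := \dim(H^1_1)$, $q := \dim(H^1_{-1})$, so that $b_1 := \dim H^1(\Gamma(\Ia_0),\bbC) = p + q$. Rewriting the displayed formula as $\tfrac{1}{2}\calL(\sigma,M) = 1 - p + q$ and using the triangle inequality $|1 - p + q| \leq 1 + |q - p| \leq 1 + (p+q)$ gives $b_1 \geq \tfrac{1}{2}|\calL(\sigma,M)| - 1 \geq B - 1$. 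Since $(-1)^{s+1} = -1$ when $s = 0$, this is already the asserted bound in that case.

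To recover the sharper constant in the remaining case $s = 1$, I would feed in the sign of the Lefschetz number, recorded just before the theorem: when $s=1$ we have $\calL(\sigma,M) < 0$, hence $1 - p + q < 0$ and $\tfrac{1}{2}|\calL(\sigma,M)| = p - q - 1 \geq B$, so that $p - q \geq B + 1$. Because $q \geq 0$, this yields $b_1 = p + q \geq p - q \geq B + 1 = B + (-1)^{s+1}$. Combining the two cases proves the theorem.

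There is no genuine obstacle at this stage: once orientation-reversal, the Lefschetz formula \eqref{eqLefschetzOrient}, the quantitative bound \eqref{eqLefschetzSpecial}, and the sign of $\calL(\sigma,M)$ are in place, the statement is a short piece of linear algebra on the eigenspaces. The only point demanding care is the bookkeeping of the term $(-1)^{s+1}$: the uniform estimate $b_1 \geq \tfrac{1}{2}|\calL(\sigma,M)| - 1$ is lossless for $s=0$ but loses a unit for $s=1$, and it is precisely the negativity of $\calL(\sigma,M)$ in the latter case that must be invoked to restore it.
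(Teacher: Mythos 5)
Your proposal is correct and follows essentially the same route as the paper: both combine the orientation-reversal formula \eqref{eqLefschetzOrient}, the bound \eqref{eqLefschetzSpecial}, the sign $(-1)^s$ of the Lefschetz number, and the non-negativity of the eigenspace dimensions $\dim H^1_{\pm 1}$. The only cosmetic difference is that the paper handles both signs at once by multiplying the identity $(-1)^s\tfrac{1}{2}|\calL(\sigma,\Gamma(\Ia_0))|-1=\dim(H^1_{-1})-\dim(H^1_{1})$ by $(-1)^s$ and bounding $(-1)^s(\dim H^1_{-1}-\dim H^1_{1})\leq b_1$, whereas you split into the cases $s=0$ and $s=1$, which unwinds the same computation.
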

\begin{proof}
   It follows from equation \eqref{eqLefschetzOrient} that
  \begin{equation*}
      (-1)^s(1/2)|\calL(\sigma,\Gamma(\Ia_0))|-1 = \dim(H^1_{-1})-\dim(H^1_{1})
  \end{equation*}
    Multiply with $(-1)^s$ (the sign of the Lefschetz number), plug in the right hand side of \eqref{eqLefschetzSpecial} and the claim follows.
\end{proof}
\begin{remark}
  Theorem \ref{thmH1Estimate} implies directly that the first Betti number may become arbitrarily large as $\Ia_0$ varies, since the 
  term $[K_0:K_0(\Ia_0)]$ is unbounded. Moreover, only the term $[K_0:K_0(\Ia_0)]$ is responsible for the order of growth, since
  $2^{\rho(\Ia_0)}$ is bounded by some number depending on the extension $E/F$.
\end{remark}

Let $\Gamma(1) := G(\cO_F) = \SL_1(\Lambda)$ be the norm one group of the order $\Lambda$. For every non-trivial
ideal $\Ia_0 \subset \cO_F$, the
index $[\Gamma(1):\Gamma(\Ia_0)]$ satisfies
\begin{equation*}
    [\Gamma(1):\Gamma(\Ia_0)] = [K: K(\Ia_0)].
\end{equation*}
This can be checked exploiting strong approximation of the group $G$.

\begin{corollary}\label{corEstimateH1Index}
 For every non-trivial ideal $\Ia_0 \subseteq \cO_F$ such that $\Gamma(\Ia_0)$ is torsion-free
 the following holds
  \begin{equation*}
    \dim(H^1(\Gamma(\Ia_0), \bbC))+ (-1)^s \geq 2^{\rho(\Ia_0)} (2\pi)^{-2d}|\disc_F|^{3/2} \Delta(D_0)[\Gamma(1):\Gamma(\Ia_0)]^{1/2}.
  \end{equation*}
   In particular, there is a positive real number $\kappa(F,D_0)$ such that
    \begin{equation*}
        \dim(H^1(\Gamma(\Ia_0), \bbC)) \geq \kappa(F,D_0)[\Gamma(1):\Gamma(\Ia_0)]^{1/2}
    \end{equation*}
    for every ideal $\Ia_0$ with sufficiently large index $[\Gamma(1):\Gamma(\Ia_0)]$.
\end{corollary}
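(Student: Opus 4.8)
The plan is to chain together the three facts already established: the lower bound for $\dim H^1$ from Theorem \ref{thmH1Estimate}, the volume--index comparison from Corollary \ref{corEstimate}, and the identity $[\Gamma(1):\Gamma(\Ia_0)] = [K:K(\Ia_0)]$ recorded just above the statement via strong approximation. First I would rewrite the conclusion of Theorem \ref{thmH1Estimate} by moving the additive term to the left. Since $-(-1)^{s+1} = (-1)^s$, that theorem is equivalent to
\[
\dim(H^1(\Gamma(\Ia_0),\bbC)) + (-1)^s \geq 2^{\rho(\Ia_0)}(2\pi)^{-2d}\zeta_F(2)|\disc_F|^{3/2}\Delta(D_0)[K_0:K_0(\Ia_0)],
\]
which is exactly the shape in which the congruence index $[K_0:K_0(\Ia_0)]$ over $F$ appears.

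Next I would substitute the bound $[K_0:K_0(\Ia_0)] \geq \zeta_F(2)^{-1}[K:K(\Ia_0)]^{1/2}$ furnished by Corollary \ref{corEstimate}. The point is that the factor $\zeta_F(2)$ produced by the Tamagawa volume computation cancels against the $\zeta_F(2)^{-1}$ coming from the estimate, leaving a clean constant. Replacing $[K:K(\Ia_0)]$ by $[\Gamma(1):\Gamma(\Ia_0)]$ through the index identity then yields
\[
\dim(H^1(\Gamma(\Ia_0),\bbC)) + (-1)^s \geq 2^{\rho(\Ia_0)}(2\pi)^{-2d}|\disc_F|^{3/2}\Delta(D_0)[\Gamma(1):\Gamma(\Ia_0)]^{1/2},
\]
which is precisely the first displayed inequality of the corollary.

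For the \emph{In particular} clause I would set $\kappa_0 := (2\pi)^{-2d}|\disc_F|^{3/2}\Delta(D_0)$ and discard the factor $2^{\rho(\Ia_0)} \geq 1$, obtaining $\dim(H^1(\Gamma(\Ia_0),\bbC)) \geq \kappa_0[\Gamma(1):\Gamma(\Ia_0)]^{1/2} - (-1)^s$. When $s$ is odd the correction term $-(-1)^s = +1$ only helps, so one may take $\kappa(F,D_0) = \kappa_0$ outright. The one genuine point is the even case, where the additive $-1$ must be absorbed into the multiplicative bound; this is the sole reason the asymptotic qualifier ``sufficiently large index'' is required. I would fix any $\kappa(F,D_0)$ with $0 < \kappa(F,D_0) < \kappa_0$; then for every ideal satisfying $[\Gamma(1):\Gamma(\Ia_0)]^{1/2} \geq (\kappa_0-\kappa(F,D_0))^{-1}$ the surplus $(\kappa_0-\kappa(F,D_0))[\Gamma(1):\Gamma(\Ia_0)]^{1/2}$ dominates the constant $1$, giving the stated inequality. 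The main obstacle here is not substantive analysis but careful bookkeeping: matching the sign $(-1)^s$ coming from the Lefschetz number against the direction of the inequality and checking that the constant correction is harmless in the limit of large index.
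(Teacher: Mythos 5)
Your proposal is correct and follows essentially the same route as the paper: combine Theorem \ref{thmH1Estimate} with Corollary \ref{corEstimate} (the $\zeta_F(2)$ factors cancelling) and the identity $[\Gamma(1):\Gamma(\Ia_0)]=[K:K(\Ia_0)]$, then split into cases according to the sign $(-1)^s$. If anything, your treatment of the $s=0$ case is slightly more precise than the paper's, which only records the threshold condition $(2\pi)^{-2d}|\disc_F|^{3/2}\Delta(D_0) > [\Gamma(1):\Gamma(\Ia_0)]^{-1/2}$ without pinning down the constant, whereas you fix an explicit $\kappa(F,D_0)<\kappa_0$ together with the threshold $[\Gamma(1):\Gamma(\Ia_0)]^{1/2}\geq(\kappa_0-\kappa(F,D_0))^{-1}$.
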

  \begin{proof}
     The first statement follows readily from Theorem \ref{thmH1Estimate} together with the estimate in Corollary \ref{corEstimate}.
     The second statement is obvious if $s=1$, in this case we may take 
     $\kappa(F,D_0) = (2\pi)^{-2d}|\disc_F|^{3/2} \Delta(D_0)$. Note, that for $s=1$ the result holds for all $\Ia_0$.
     If $s=0$, then we have to take
     the index $[\Gamma(1):\Gamma(\Ia_0)]$ so large that $(2\pi)^{-2d}|\disc_F|^{3/2} \Delta(D_0)> [\Gamma(1):\Gamma(\Ia_0)]^{-1/2}$.
  \end{proof}

\subsection{Towards arbitrary groups}
From the previous Corollary we readily deduce the following weaker result, which in turn will imply the main theorem.
\begin{corollary}\label{corDecSeq}
  There is a decreasing sequence $\Gamma_1 \supset \Gamma_2 \supset \Gamma_3 \supset \dots$ of normal torsion-free subgroups of finite index in $\Gamma(1)$ 
  and a positive real number $\kappa > 0$ such that $\bigcap_i \Gamma_i = \{1\}$ and
  \begin{equation*}
      \dim H^1(\Gamma_i, \bbC) \geq \kappa [\Gamma(1):\Gamma_i]^{1/2}
  \end{equation*}
  for all $i$.
\end{corollary}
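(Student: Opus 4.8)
The plan is to realize the required sequence by principal congruence subgroups attached to a suitable descending chain of ideals, and then to feed those subgroups into Corollary \ref{corEstimateH1Index} term by term. Concretely, I would fix a composite rational integer $N$ (for instance $N = 4$) and set $\mathfrak{b}_i := N^i \cO_F$ for $i \in \bbN$. Each $\mathfrak{b}_i$ is a non-trivial proper ideal, the chain is strictly decreasing, and $\mathfrak{b}_i \cap \bbZ = N^i \bbZ$ is not a prime ideal of $\bbZ$; by the criterion recorded just after the definition of $\Gamma(\Ia_0)$ this forces $\Gamma(\mathfrak{b}_i)$ to be torsion-free. Since $\Gamma(\mathfrak{b}_i) = \ker(G(\cO_F) \to G(\cO_F/\mathfrak{b}_i))$ is the kernel of a homomorphism, it is normal of finite index in $\Gamma(1) = G(\cO_F)$.

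Next I would verify the structural properties demanded by the statement. The inclusions $\mathfrak{b}_{i+1} \subseteq \mathfrak{b}_i$ give $\Gamma(\mathfrak{b}_{i+1}) \subseteq \Gamma(\mathfrak{b}_i)$, so the family is a decreasing sequence of normal subgroups of $\Gamma(1)$. For the trivial intersection, observe that $\bigcap_i \mathfrak{b}_i = \bigcap_i N^i \cO_F = \{0\}$ because $\cO_F$ is an integral domain; an element lying in every $\Gamma(\mathfrak{b}_i)$ reduces to the identity modulo each $\mathfrak{b}_i$, hence differs from $1$ by an element of $\bigcap_i \mathfrak{b}_i \Lambda = \{0\}$, so $\bigcap_i \Gamma(\mathfrak{b}_i) = \{1\}$. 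Finally, using the identity recorded before the corollary, the indices $[\Gamma(1):\Gamma(\mathfrak{b}_i)] = [K:K(\mathfrak{b}_i)] = \prod_{v \mid \mathfrak{b}_i} |G(\cO_{F,v}/\mathfrak{b}_i \cO_{F,v})|$ grow without bound as $i \to \infty$.

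With these properties established, the estimate is immediate. Corollary \ref{corEstimateH1Index} supplies a constant $\kappa(F,D_0) > 0$ with $\dim H^1(\Gamma(\mathfrak{b}_i), \bbC) \geq \kappa(F,D_0)\,[\Gamma(1):\Gamma(\mathfrak{b}_i)]^{1/2}$ for every $i$ whose index $[\Gamma(1):\Gamma(\mathfrak{b}_i)]$ is sufficiently large. Since the indices tend to infinity, this inequality can fail for at most finitely many initial values of $i$; discarding those and re-indexing the survivors as $\Gamma_1 \supset \Gamma_2 \supset \cdots$ yields the asserted sequence with $\kappa := \kappa(F,D_0)$.

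I do not anticipate a real obstacle here: every hypothesis of Corollary \ref{corEstimateH1Index} (torsion-freeness and largeness of the index) is arranged by choosing $N$ composite and by truncating the sequence, while normality, the descending chain, the trivial intersection and the unbounded index are all formal. The only point needing slight care is that the clean lower bound of Corollary \ref{corEstimateH1Index} is guaranteed only for sufficiently large index in the case $s = 0$; this is precisely why the sequence must be started after finitely many terms rather than at $i = 1$.
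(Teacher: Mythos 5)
Your proposal is correct and follows essentially the same route as the paper: the paper's proof simply takes any decreasing sequence of ideals $\Ia_1 \supset \Ia_2 \supset \dots$ with $\bigcap_i \Ia_i = \{0\}$ satisfying the hypotheses of Corollary \ref{corEstimateH1Index} and sets $\Gamma_i = \Gamma(\Ia_i)$, which is exactly what you do with the concrete choice $\Ia_i = N^i\cO_F$. Your explicit verification of torsion-freeness, normality, trivial intersection, and the truncation needed to absorb the ``sufficiently large index'' caveat when $s=0$ just fills in details the paper leaves implicit.
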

\begin{proof}
    Take any decreasing sequence $\Ia_1 \supset \Ia_2 \supset \Ia_3  \supset \dots$ of ideals in $\cO_F$ satisfying the assumptions of Corollary 
   \ref{corEstimateH1Index} and $\bigcap_i \Ia_i = \{0\}$. Finally, define $\Gamma_i = \Gamma(\Ia_i)$.
\end{proof}

\begin{maintheorem} Let $F$ be a totally real algebraic number field and let $E$ be
   a quadratic extension field having precisely one complex place. Let $D$ be a quaternion division algebra over $E$ which is 
   ramified in all real places of $E$. 
   Assume that $D$ is of the form $D \cong D_0 \otimes_F E$ for some quaternion algebra $D_0$ over $F$.
   
   Let $\Gamma \subseteq \SL_1(D)$ be an arithmetic group. There is a positive real number $\kappa > 0$ and
   a decreasing nested sequence $\{\Gamma_i\}_{i=1}^\infty$ of torsion-free subgroups of finite index in $\Gamma$
   satisfying $\bigcap_i \Gamma_i = \{1\}$ such that
   \begin{equation*}
      \dim H^1(\Gamma_i, \bbC) \geq \kappa [\Gamma:\Gamma_i]^{1/2}
   \end{equation*}
  for all $i$. Further, for every $i$ the group $\Gamma_i$ is normal in $\Gamma_1$.
\end{maintheorem}
\begin{proof}
     According to Corollary \ref{corDecSeq} there is a real number
     $\kappa' > 0$ and a decreasing sequence $\Gamma'_1 \supset \Gamma'_2 \supset\dots$ of torsion-free, finite index subgroups in $\Gamma(1)$ satisfying the
     claimed properties with respect to $\Gamma(1)$.

     Define $\Gamma_i := \Gamma \cap \Gamma'_i$. These are finite index subgroups in $\Gamma$ due to the assumption that $\Gamma$ is arithmetic.
     Clearly the $\Gamma_i$ intersect trivially. Since $\Gamma_i$ also has finite index in $\Gamma'_i$,
     we see $\dim H^1(\Gamma_i, \bbC) \geq \dim H^1(\Gamma'_i, \bbC)$. Define $\ell:= [\Gamma:\Gamma\cap\Gamma(1)]$.
     Further, the index satisfies 
    \begin{equation*}
        [\Gamma: \Gamma_i] = [\Gamma:\Gamma\cap\Gamma(1)][\Gamma\cap\Gamma(1):\Gamma_i] \leq \ell [\Gamma(1): \Gamma'_i].
    \end{equation*}
    Finally, we conclude
    \begin{equation*}
         \dim H^1(\Gamma_i, \bbC) \geq \dim H^1(\Gamma'_i, \bbC) \geq \kappa' [\Gamma(1): \Gamma'_i]^{1/2} \geq \kappa'\ell^{-1/2}[\Gamma: \Gamma_i]^{1/2}.
    \end{equation*}
     Since, $\Gamma'_i$ is normal in $\Gamma'_1$ for all $i$, we see that $\Gamma_i$ is normal in $\Gamma_1$. However, 
     the groups $\Gamma_i$ need not be normal in $\Gamma$.
\end{proof}

\section{The case of Bianchi groups}\label{sectBianchi}

\subsection{} In this section we make some comments on the classical case of Bianchi groups, which are
  \emph{non-cocompact} arithmetically defined subgroups of $\SL_2(\bbC)$.
  Let $F= \bbQ$ be the field of rational numbers and let $E$ be an imaginary quadratic number field.
  Moreover, let $\Ia \subseteq \cO_E$ be a non-trivial ideal 
  and define the principal congruence subgroup $\Gamma(\Ia) := \ker( \SL_2(\cO_E) \to \SL_2(\cO_E/\Ia))$ of level $\Ia$.
  We also use the notation $\Gamma(1) := \SL_2(\cO_E)$.

\subsection{}
 It is easy to obtain a result for Bianchi groups which is similar to the main theorem but with higher order of growth.
 Note that 
   \begin{equation*}
   [\Gamma(1):\Gamma(\Ia)] = |\SL_2(\cO_E/\Ia)| = \Nr(\Ia)^3 \prod_{\Ip | \Ia} (1-\Nr(\Ip)^{-2}),
  \end{equation*}
  where $\Nr(\Ia) = |\cO_E/\Ia|$. Assume that $\Gamma(\Ia)$ is torsion-free and let $h_\Ia$ denote the number of cusps of $\Gamma(\Ia)$.
  One can show that this number is given by
   \begin{equation*}
    h_\Ia = h_E |\mu_E|^{-1} \Nr(\Ia)^{-1} [\Gamma(1):\Gamma(\Ia)],
  \end{equation*}
  where $h_E$ is the ideal class number of $E$ and $\mu_E$ the (finite) group of units of $\cO_E$.
  The group $\Gamma(\Ia)$ acts freely and properly on hyperbolic three space
    \begin{equation*}
     \Hyp \cong \bsl{\SU(2)}{\SL_2(\bbC)}
    \end{equation*}
  and we obtain a non-compact hyperbolic manifold $\Hyp/\Gamma(\Ia)$.
  It follows from reduction theory that there is a compact manifold with boundary $M \subset \Hyp/\Gamma(\Ia)$
  such that the embedding $M \to \Hyp/\Gamma(\Ia)$ is a homotopy equivalence (cf. \cite[17.10]{Borel1969}).
  The boundary $\partial M$ of $M$ is a topologically disjoint union of $h_\Ia$ two-dimensional tori. 
  A general topological argument implies that the image of the restriction map
  \begin{equation*}
     r^1: H^1(M, \bbC) \longrightarrow H^1(\partial M, \bbC)
  \end{equation*}
  is a maximal isotropic subspace of $H^1(\partial M, \bbC)$
  with respect to the non-degenerate Poincar\'e pairing (see Lemme 11 in \cite{Serre1970} or 
  use the argument of the proof of VIII, 9.6 in \cite{Dold1980}).
  We conclude that 
    \begin{equation*}
       \dim H^1(\Gamma(\Ia),\bbC) \geq \dim(\im(r^1)) = \frac{1}{2} \dim H^1(\partial M, \bbC) = h_\Ia.
    \end{equation*}
  Summing up, it is easy to prove that
   \begin{equation}\label{eqEstimateBianchi}
       \dim H^1(\Gamma(\Ia),\bbC) \geq h_E |\mu_E|^{-1} \zeta_E(2)^{-1/3} [\Gamma(1):\Gamma(\Ia)]^{2/3}.
   \end{equation}
  Using the argument in the proof of the main theorem, one can obtain a similar result for arbitrary 
  arithmetic groups in $\SL_2(E)$.
  \begin{theorem}
     Let $E$ be an imaginary quadratic number field and let $\Gamma \subset \SL_2(E)$ be an arithmetic group. 
     There are a positive real number $\kappa > 0$ and
     a decreasing sequence $\{\Gamma_i\}_{i=1}^\infty$ (with trivial intersection) of torsion-free, finite index subgroups in $\Gamma$ 
     such that
             \begin{equation*}
                  \dim H^1(\Gamma_i, \bbC) \geq \kappa [\Gamma:\Gamma_i]^{2/3}  
             \end{equation*}
    for all $i \geq 1$. Moreover, the group $\Gamma_i$ is normal in $\Gamma_1$ for every index $i$.  
  \end{theorem}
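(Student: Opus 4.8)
The plan is to mirror the proof of the Main Theorem verbatim, with the single change that the exponent $1/2$ is replaced by $2/3$ and the base estimate \eqref{eqEstimateBianchi} for principal congruence subgroups plays the role that Corollary \ref{corEstimateH1Index} played there. In particular, the genuine analytic content — relating $\dim\im(r^1)$ to the cusp number $h_\Ia$ via the maximal-isotropic (``half-lives-half-dies'') argument on $H^1(\partial M,\bbC)$, and assembling this into \eqref{eqEstimateBianchi} — is already in hand in the preceding paragraph, so what remains is purely a commensurability-and-index bookkeeping argument.

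First I would produce a Bianchi analogue of Corollary \ref{corDecSeq}. Choose a decreasing chain of ideals $\Ia_1 \supset \Ia_2 \supset \cdots$ in $\cO_E$ with $\bigcap_i \Ia_i = \{0\}$ and with each $\Ia_i$ deep enough that $\Gamma(\Ia_i) = \ker(\SL_2(\cO_E)\to\SL_2(\cO_E/\Ia_i))$ is torsion-free (for instance, divisible by two distinct rational primes, so that no nontrivial element of finite order survives). Set $\Gamma'_i := \Gamma(\Ia_i)$. As kernels of homomorphisms out of $\Gamma(1)=\SL_2(\cO_E)$, these are normal in $\Gamma(1)$, hence each $\Gamma'_i$ is normal in $\Gamma'_1$; they form a decreasing chain with trivial intersection; and \eqref{eqEstimateBianchi} gives $\dim H^1(\Gamma'_i,\bbC) \geq \kappa'\,[\Gamma(1):\Gamma'_i]^{2/3}$ with $\kappa' := h_E|\mu_E|^{-1}\zeta_E(2)^{-1/3}$.

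Next, for an arbitrary arithmetic $\Gamma\subset\SL_2(E)$ — which by definition is commensurable with $\Gamma(1)$ — I would set $\Gamma_i := \Gamma\cap\Gamma'_i$ and check the four bookkeeping claims exactly as in the Main Theorem. Each $\Gamma_i$ has finite index in $\Gamma$ by commensurability, is torsion-free because it lies in $\Gamma'_i$, and the $\Gamma_i$ intersect trivially. Writing $\Gamma_i = (\Gamma\cap\Gamma(1))\cap\Gamma'_i$ and using that $\Gamma\cap\Gamma(1)$ has finite index in $\Gamma(1)$, the inclusion $\Gamma_i\subseteq\Gamma'_i$ has finite index, so the restriction $H^1(\Gamma'_i,\bbC)\to H^1(\Gamma_i,\bbC)$ is injective (transfer over a characteristic-zero field), giving $\dim H^1(\Gamma_i,\bbC)\geq\dim H^1(\Gamma'_i,\bbC)$. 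With $\ell:=[\Gamma:\Gamma\cap\Gamma(1)]$ the index telescopes as $[\Gamma:\Gamma_i]=\ell\,[\Gamma\cap\Gamma(1):\Gamma_i]\leq\ell\,[\Gamma(1):\Gamma'_i]$, the last step because $\Gamma'_i$ is normal in $\Gamma(1)$. Finally, for $g\in\Gamma_1\subseteq\Gamma\cap\Gamma(1)$ one has $g\Gamma g^{-1}=\Gamma$ and $g\Gamma'_i g^{-1}=\Gamma'_i$ (normality of the principal congruence subgroup), whence $g\Gamma_i g^{-1}=\Gamma_i$, so $\Gamma_i$ is normal in $\Gamma_1$.

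Stringing these together yields
\[
   \dim H^1(\Gamma_i,\bbC)\ \geq\ \dim H^1(\Gamma'_i,\bbC)\ \geq\ \kappa'\,[\Gamma(1):\Gamma'_i]^{2/3}\ \geq\ \kappa'\,\ell^{-2/3}\,[\Gamma:\Gamma_i]^{2/3},
\]
so the theorem holds with $\kappa:=\kappa'\,\ell^{-2/3}$. Since the deep steps are already established, the main obstacle is no more than keeping the bookkeeping honest; the one point deserving care is that the commensurability factor enters as $\ell^{-2/3}$ (rather than the $\ell^{-1/2}$ of the Main Theorem), precisely because the governing exponent here is $2/3$.
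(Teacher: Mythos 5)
Your proposal is correct and is precisely the argument the paper intends: the paper proves the estimate \eqref{eqEstimateBianchi} for principal congruence subgroups and then simply invokes ``the argument in the proof of the main theorem,'' which is exactly the commensurability-and-index bookkeeping you carry out (intersecting with $\Gamma$, transfer injectivity of restriction on $H^1$, the index bound $[\Gamma:\Gamma_i]\leq\ell\,[\Gamma(1):\Gamma'_i]$, and normality in $\Gamma_1$). Your observation that the commensurability constant enters as $\ell^{-2/3}$ rather than $\ell^{-1/2}$ is the only adaptation needed, and you have it right.
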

  \begin{remark}
     Using the upper bounds of Calegari and Emerton \cite{CalegariEmerton2009} it follows that this is (in some cases)
     the correct asymptotic order of magnitude. Let $p$ be a prime number which splits in $E$ and let $\Ip \subseteq \cO_E$
     be a prime ideal of $\cO_E$ dividing $p$. In this case
     Theorem 3.4 of Calegari and Emerton \cite{CalegariEmerton2009} yields
     \begin{equation*}
         \dim H^1(\Gamma(\Ip^k), \bbC) = O(p^{2k}) 
     \end{equation*}
     as $k$ tends to infinity. As we have seen $[\Gamma(1):\Gamma(\Ip^k)] = p^{3k} (1-p^{-2})$,
     and together with \eqref{eqEstimateBianchi} it follows that
     \begin{equation*}
         \dim H^1(\Gamma(\Ip^k), \bbC) \:\asymp\: [\Gamma(1):\Gamma(\Ip^k)]^{2/3},
     \end{equation*}
     that is, both terms have the same order of magnitude as $k$ goes to infinity.
  \end{remark}
\subsection{The Lefschetz number}
Recall that the Lefschetz number formula obtained in Theorem \ref{thmLefschetzNumber} was independent of the assumptions made 
later on in Section \ref{sectHyperbolicApplications}. In particular, we may use it for Bianchi groups.

Let $d$ be a squarefree integer and let $E := \bbQ(\sqrt{d})$. Notice that we even do not assume
 that $d$ is negative in this paragraph. However, we assume that the extension $E/\bbQ$ is unramified over $2$, this
is the case precisely if
$d \equiv 1 \mod 4$. Let $m \geq 3$ be an integer and define the ideal $\Ia = m\cO_E$.
There is one split real place of $D_0 = M_2(\bbQ)$, i.e. $s=1$. Moreover, there are no real ramified places of $D_0$, hence $c=0$.
Finally, we see that $\rho(m) = |\{\:p \text{ prime number }\:|\: p|d \text{ and } \: p\nmid m \:\}|$. We define the congruence
 subgroup $\Gamma(m) :=  \Gamma(\Ia)$ in $\SL_2(\cO_E)$. We obtain the following Corollary to Theorem \ref{thmLefschetzNumber}. 
\begin{corollary}
  Let $E=\bbQ(\sqrt{d})$ be a quadratic number field for some squarefree integer $d \equiv 1 \mod 4$.
  Let $\sigma$ be the non-trivial Galois automorphism of $E/\bbQ$ and let $m \geq 3$ be an integer.
  Then 
   \begin{equation*}
     \calL(\sigma, \Gamma(m)) = - \frac{2^{\rho(m)} m^3}{12}\prod_{p|m}(1-p^{-2})
   \end{equation*}
  is the Lefschetz number of $\sigma$ in the cohomology of the principal congruence subgroup $\Gamma(m) \subset \SL_2(\cO_E)$.
\end{corollary}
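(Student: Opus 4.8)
The plan is simply to specialize the exact formula of Theorem~\ref{thmLefschetzNumber} to the present situation, in which $F = \bbQ$ and $D_0 = M_2(\bbQ)$. First I would record how each invariant degenerates. Since $F = \bbQ$ we have $d = [F:\bbQ] = 1$ and $\disc_F = 1$. The split quaternion algebra $D_0 = M_2(\bbQ)$ is ramified at no place of $\bbQ$, so $\Ram_\infty(D_0) = \Ram_f(D_0) = \emptyset$; consequently $r = 0$, $s = 1$, $\Delta(D_0) = 1$ (the empty product), and $c = 1 - s = 0$, in agreement with the values recorded before the statement. The associated norm-one group $G_0 = \SL_1(M_2(\bbQ))$ is just $\SL_2/\bbQ$, so that $G(\cO_F) = G_0(\cO_E) = \SL_2(\cO_E)$; taking $\Ia_0 := m\bbZ$ we get $\Ia_0\cO_E = m\cO_E = \Ia$ and hence $\Gamma(\Ia_0) = \Gamma(m)$.

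Next I would verify the hypotheses of Theorem~\ref{thmLefschetzNumber}. Strong approximation holds for $G$ because $D = M_2(E)$ is split at every archimedean place of $E$, so at least one such place splits $D$ (regardless of the sign of $d$). The extension $E/\bbQ$ is unramified over $2$ precisely because $d \equiv 1 \bmod 4$, which is exactly the regime where the \emph{exact} formula is available rather than merely the lower bound. Finally, $\Gamma(m)$ is torsion-free for every $m \geq 3$ by the classical argument of Minkowski, so $\Gamma(\Ia_0)$ is torsion-free as required. One should also note that Theorem~\ref{thmLefschetzNumber} never assumed $D_0$ to be a division algebra, so the split case $D_0 = M_2(\bbQ)$ is admissible; this is the point emphasized at the start of this section.

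It then remains to assemble the numerical factors. Substituting $d = 1$, $r = 0$, $s = 1$, $c = 0$, $\Delta(D_0) = 1$, $|\disc_F|^{3/2} = 1$ and $\zeta_F(2) = \zeta(2) = \pi^2/6$ into the exact formula gives
\begin{equation*}
   \calL(\sigma, \Gamma(m)) = -\,2^{\rho(m)-1}\,\pi^{-2}\,\frac{\pi^2}{6}\,[K_0 : K_0(\Ia_0)] = -\frac{2^{\rho(m)}}{12}\,[K_0 : K_0(\Ia_0)].
\end{equation*}
For the remaining index I would invoke the index formula of Section~\ref{sectEstimates}, namely $[K_0 : K_0(\Ia_0)] = \prod_{v \mid \Ia_0}|G_0(\cO_{F,v}/\Ia_0\cO_{F,v})|$, which here equals $|\SL_2(\bbZ/m\bbZ)|$ by the Chinese remainder theorem, together with the standard order computation $|\SL_2(\bbZ/m\bbZ)| = m^3\prod_{p\mid m}(1 - p^{-2})$. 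Plugging this in yields precisely the asserted value of $\calL(\sigma, \Gamma(m))$.

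There is no genuine obstacle in this proof: the whole argument is an instance of Theorem~\ref{thmLefschetzNumber} followed by bookkeeping. The only points demanding a moment's care are checking that the split algebra $D_0 = M_2(\bbQ)$ and the possibly real-quadratic field $E$ still satisfy the running hypotheses (strong approximation, unramifiedness over $2$, and torsion-freeness of $\Gamma(m)$), and correctly matching the abstract invariants $c$, $r$, $s$, $\rho$ and $\Delta(D_0)$ against their concrete Bianchi-case values.
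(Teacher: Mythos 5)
Your proposal is correct and takes essentially the same route as the paper, whose entire proof reads ``This follows from Theorem~\ref{thmLefschetzNumber} using $\zeta(2)=\pi^2/6$.'' Your substitution of $d=1$, $s=1$, $c=r=0$, $\Delta(D_0)=1$, $|\disc_F|=1$ into the exact formula, together with the hypothesis checks (strong approximation, unramifiedness over $2$ from $d\equiv 1 \bmod 4$, torsion-freeness of $\Gamma(m)$ for $m\geq 3$) and the index computation $[K_0:K_0(m\bbZ)] = |\SL_2(\bbZ/m\bbZ)| = m^3\prod_{p\mid m}(1-p^{-2})$, is exactly the bookkeeping the paper leaves implicit.
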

\begin{proof}
    This follows from Theorem \ref{thmLefschetzNumber} using $\zeta(2) = \pi^2/6$.
\end{proof}
The Lefschetz number of the Galois automorphism acting on the full group $\PSL_2(\cO_E)$ has been calculated by Rohlfs \cite{Rohlfs1985}.
A formula for the Lefschetz number of $\sigma$ on congruence subgroups in $\SL_2(\cO_E)$ (without restrictions on $d$)
has recently been announced by Seng\"un and T\"urkelli.

\section{Appendix: Local calculations}\label{secAppendixLocal}

\subsection{} In this appendix we gather those results for the non-abelian Galois cohomology sets $H^1$ which
can be stated locally. In this section $F$ denotes a finite extension of some $p$-adic field $\bbQ_p$ where $p$ is a prime
number. We write $\co_0$ for the valuation ring of $F$ and we choose a uniformizer $\pi_0 \in \co_0$ which generates 
the prime ideal $(\pi_0)=\Ip_0 \subset \co_0$.
The residue class field $\co_0/\Ip_0$ will be denoted $k_0$.
Moreover, let $E/F$ be a quadratic extension. The valuation ring of $E$ will be denoted by $\co$, and let $\pi$ be a generator
of the prime ideal $\pi\co = \Ip \subset \co$. The residue field of $E$ is denoted $k$.
The non-trivial Galois automorphism of $E/F$ will be refered to as $\sigma$.

\subsection{} Let $D_0$ be a quaternion algebra defined over $F$ and let $\Lambda_0$ denote a maximal $\co_0$-order in $D_0$.
We get the quaternion algebra $D := D_0 \otimes_F E$ over $E$ with the order $\Lambda = \Lambda_0 \otimes_{\co_0} \co$. 
It is important to understand that this order need not be a maximal $\co$-order of $D$.
One should further notice that $D$ is always isomorphic to the matrix algebra $M_2(E)$, since every quadratic extension
splits $D_0$ (cf. Thm.1.3 in \cite[p.33]{Vigneras1980}).
Moreover, we define the group schemes $G_0$ and $G$ over $\co_0$ just as in \ref{subsecGroupSchemes}.

For every integer $j \geq 1$ we define the open compact subgroup $K(j)$ as the kernel
 of the reduction map $G(\co_0) \to G(\co_0/\Ip_0^j)$.
 Further, we set $K(0) := G(\co_0)$. These subgroups are $\sigma$-stable, and we want to understand the cohomology sets $H^1(\sigma,K(j))$.

\subsection{Basic observation}\label{sectBasicObservation} We want to determine the first non-abelian cohomology $H^1(\sigma, G(\co_0))$.
  In order to do this, we mimic the proof
  of (29.2) in \cite{BOInv}, but we work with rings instead of fields. 
  Let $b \in Z^1(\sigma, \GL_1(\Lambda))$ be a cocycle. 
   We define the fixed point space
   \begin{equation*}
       U(b):= \{\:x \in \Lambda \:|\: b \up{\sigma}x = x\:\},
   \end{equation*}
    which clearly is a right $\Lambda_0$-module. It follows from the theory of Galois descent that
     the canonical map 
  \begin{equation*}
    \phi_b: U(b) \otimes_{\co_0} \co  \to \Lambda
  \end{equation*}
   is injective and that the image is an $\co$-lattice of finite index in $\Lambda$.
   The $\co_0$-module $U(b)$ is free and we deduce that $U(b)$ is of $\co_0$-rank four.
   As $\Lambda_0$ is a right
   principal ideal ring (see (17.3) in \cite{Reiner2003}), we see that $U(b)$ is isomorphic to $\Lambda_0$ as right
   $\Lambda_0$-module.
   We choose a generator $g \in U(b)$, i.e. every $x \in U(b)$ can be written $x = gy$ for some $y \in \Lambda_0$.

   Observe that, given two equivalent cocycles $b, b'$ with $c \in \GL_1(\Lambda)$ 
   satisfying $b' = c^{-1} b \up{\sigma}c$,
   it follows that $U(b) = c U(b')$ and similarly $\im(\phi_b) = c \im(\phi_{b'})$. This means if such a relation is not possible, we can use 
   the images of $\phi_b$ and $\phi_{b'}$ to exclude that $b$ and $b'$ are equivalent.
   This setting will be used in the proofs of the following results.

\subsection{The unramified case}
In this section we will assume that the extension $E/F$ is unramified.

Suppose $D_0$ is a matrix algebra, 
then the order $\Lambda = \Lambda_0 \otimes \co$ is maximal and isomorphic to
the full matrix algebra $M_2(\co)$ (cf. \cite[(17.3)]{Reiner2003}). In particular, the reduced norm $\nrd: \Lambda \to \co$ is onto.

On the other hand, if $D_0$ is the unique quaternion division algebra over $F$, then
$\Lambda = \Lambda_0 \otimes \co$ is not maximal.
More precisely, there is an isomorphism of $E$ algebras
\begin{equation*}
    D \stackrel{\simeq}{\longrightarrow} M_2(E) 
\end{equation*}
which maps the order $\Lambda$ to $\{\:\bigl(\begin{smallmatrix}x&y\\ \pi z & w\end{smallmatrix}\bigr)\:|\: x, y, z, w \in \co\:\}$.
This means $\Lambda$ is an Eichler order of level $\pi\co$.
However, the reduced norm $\nrd: \Lambda \to \co$ is again surjective.

\begin{lemma} \label{lemmaLocalH1Unram}
   Suppose $E/F$ is unramified. In this case $H^1(\sigma, \GL_1(\Lambda)) = \{1\}$.
\end{lemma}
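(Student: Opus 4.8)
The plan is to take an arbitrary cocycle $b \in Z^1(\sigma,\GL_1(\Lambda))$ and produce a \emph{unit} $g \in \Lambda^\times$ realizing $b$ as a coboundary, i.e. with $b = g\,\up{\sigma}(g^{-1})$. First I would invoke the Basic Observation of \ref{sectBasicObservation}: the fixed module $U(b) = \{x\in\Lambda : b\,\up{\sigma}x = x\}$ is free of rank one as a right $\Lambda_0$-module (using that $\Lambda_0$ is a right principal ideal ring), so it has a single generator $g$, and the canonical map $\phi_b\colon U(b)\otimes_{\co_0}\co\to\Lambda$ is injective with image the right ideal $g\Lambda$. Since $b\,\up{\sigma}g = g$ for the generator, everything comes down to showing that $g$ can be taken to be a unit of $\Lambda$; equivalently, that $\phi_b$ is surjective, so that $g\Lambda = \Lambda$.

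The key new ingredient, and the step that genuinely uses the hypothesis that $E/F$ is unramified, is the following upgrade of the finite-index statement to an isomorphism. Consider the map $\tau\colon\Lambda\to\Lambda$, $\tau(x) = b\,\up{\sigma}x$. It is $\sigma$-semilinear over $\co$ (as $\co$ is central), and the cocycle identity $b\,\up{\sigma}b = 1$ shows $\tau^2 = \id$, so $\tau$ is a semilinear involution of the finite free $\co$-module $\Lambda$ with $\Lambda^\tau = U(b)$. Because $E/F$ is unramified, the extension $\co/\co_0$ is finite étale (Galois with group $\langle\sigma\rangle$), and faithfully flat Galois descent then forces the base-change map $U(b)\otimes_{\co_0}\co\to\Lambda$ to be an \emph{isomorphism}, not merely of finite index. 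Hence $g\Lambda = \Lambda$; writing $gh = 1$ and comparing reduced norms ($\nrd(g)\nrd(h) = 1$ with both factors in $\co$) shows $\nrd(g)\in\co^\times$, so $g\in\Lambda^\times$. I expect this surjectivity to be the only real obstacle: in the general Basic Observation one only gets a lattice of finite index, and it is precisely étaleness that removes the index.

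With $g\in\Lambda^\times$ in hand the conclusion is immediate: from $b\,\up{\sigma}g = g$ we get $g^{-1}b\,\up{\sigma}g = 1$, exhibiting $b$ as equivalent to the trivial cocycle, so $H^1(\sigma,\GL_1(\Lambda)) = \{1\}$. As a cross-check (and an alternative to quoting descent for modules) I would verify surjectivity of $\phi_b$ by reduction modulo $\pi_0$: since $E/F$ is unramified, $\sigma$ reduces to the generator of $\Gal(k/k_0)$ acting on $\bar\Lambda = \Lambda/\pi_0\Lambda$, the induced $\bar\tau$ is again a semilinear involution, and field-level Galois descent (the additive normal basis theorem) makes $\bar\phi_b\colon \overline{U(b)}\otimes_{k_0}k\to\bar\Lambda$ an isomorphism of $4$-dimensional $k$-spaces; Nakayama's lemma then lifts this to surjectivity of $\phi_b$. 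It is worth noting that both arguments break down when $E/F$ ramifies, which is exactly why the ramified case is deferred to the separate, more delicate local computations of the appendix.
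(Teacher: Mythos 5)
Your proposal is correct and takes essentially the same route as the paper's own proof: both run the Basic Observation of \ref{sectBasicObservation}, reduce the lemma to surjectivity of $\phi_b$, and conclude that the generator $g$ of $U(b)$ is a unit of $\Lambda$ exhibiting $b = g\,\up{\sigma}g^{-1}$ as a coboundary. The only difference is that where you cite faithfully flat Galois descent for the \'etale extension $\co/\co_0$, the paper proves exactly that statement by hand: it chooses $\zeta$ with $\co = \co_0 \oplus \zeta\co_0$, observes that $\up{\sigma}\zeta - \zeta$ is a unit (the precise point where unramifiedness enters), and writes any $v \in \Lambda$ as an explicit $\co$-combination of the elements $v + b\,\up{\sigma}v$ and $\zeta v + b\,\up{\sigma}\zeta\,\up{\sigma}v$ of $U(b)$ --- which is just the inline proof of the descent isomorphism you quote.
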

\begin{proof}
   First, choose $\pi = \pi_0$.
   We want to show that $\phi_b$ is surjective.
   We find an element $\zeta \in \co$ such that $E = F(\zeta)$ and $\co = \co_0 \oplus \zeta \co_0$.
   Note that $\up{\sigma}\zeta-\zeta \not\equiv 0 \mod \pi_0$ since $\zeta + \pi_0\co \notin k_0$. Consequently,
   $\up{\sigma}\zeta  - \zeta$ is a unit in $\co$ and we choose $u := (\up{\sigma}\zeta  - \zeta)^{-1}$.
   Take any $v \in \Lambda$, then $v_1 = v + b\up{\sigma}v$ and $v_2 = \zeta v + b\up{\sigma}{\zeta}\up{\sigma}{v}$ are in $U(b)$.
   Finally, we conclude that $v = \up{\sigma}\zeta u v_1 - u v_2 \in \im(\phi)$.
   This means every element in $\Lambda$ can be written as $gy$ for some $y \in \Lambda$. We deduce that $g$ is a unit in $\Lambda$
   and $b = g \up{\sigma}g^{-1}$ since $g \in U(b)$.
\end{proof}

\begin{corollary}\label{corLocalUnramified1}
  If the extension $E/F$ is unramified, then $H^1(\sigma, G(\co_0)) = \{1\}$.
\end{corollary}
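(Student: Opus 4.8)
The plan is to deduce the corollary from Lemma \ref{lemmaLocalH1Unram} by running the long exact cohomology sequence attached to the reduced norm, in parallel with the proofs of Lemma \ref{lemmaH1LocalField2} and its global analogue. First I would record the identification $G(\co_0) = G_0(\co) = \SL_1(\Lambda)$ coming from the restriction-of-scalars construction of $G$ in \ref{subsecGroupSchemes}, so that the object to be computed is $H^1(\sigma, \SL_1(\Lambda))$, with $\sigma = \id_{\Lambda_0}\otimes\sigma$ acting on $\Lambda = \Lambda_0 \otimes_{\co_0} \co$.

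Next I would set up the short exact sequence of $\sigma$-groups
\begin{equation*}
  1 \longrightarrow \SL_1(\Lambda) \longrightarrow \GL_1(\Lambda) \stackrel{\nrd}{\longrightarrow} \co^\times \longrightarrow 1.
\end{equation*}
Surjectivity on the right rests on the two observations made just above: in both the split and the Eichler-order case the reduced norm $\nrd\colon \Lambda \to \co$ is onto, and an element of $\Lambda$ is a unit precisely when its reduced norm is a unit in $\co$; together these give $\nrd(\GL_1(\Lambda)) = \co^\times$. Passing to the associated long exact sequence of pointed sets (cf. (28.3) in \cite{BOInv}), and using the fixed-point identifications $\GL_1(\Lambda)^\sigma = \Lambda_0^\times = \GL_1(\Lambda_0)(\co_0)$ and $(\co^\times)^\sigma = \co_0^\times$, the relevant segment reads
\begin{equation*}
  \GL_1(\Lambda_0) \stackrel{\nrd}{\longrightarrow} \co_0^\times \stackrel{\delta}{\longrightarrow} H^1(\sigma, \SL_1(\Lambda)) \longrightarrow H^1(\sigma, \GL_1(\Lambda)).
\end{equation*}

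I would then invoke Lemma \ref{lemmaLocalH1Unram}, which gives $H^1(\sigma, \GL_1(\Lambda)) = \{1\}$ in the unramified case. Exactness forces the whole of $H^1(\sigma, \SL_1(\Lambda))$ to lie in the image of $\delta$; since the quotient $\co^\times$ is abelian, the connecting map induces a bijection of pointed sets $\co_0^\times/\nrd(\Lambda_0^\times) \cong H^1(\sigma, \SL_1(\Lambda))$. It thus remains only to check $\nrd(\Lambda_0^\times) = \co_0^\times$. In the split case $\Lambda_0 = M_2(\co_0)$ this is just surjectivity of $\det\colon \GL_2(\co_0) \to \co_0^\times$; in the division case it is the standard fact that the reduced norm of the unit group of the maximal order of a local quaternion division algebra is all of $\co_0^\times$ (cf. \cite{Vigneras1980}).

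The one genuinely non-formal input is this last surjectivity of the reduced norm on integral units, so that is the step I would single out and justify (or cite) with care; the remainder is bookkeeping with the non-abelian long exact sequence, where I would only need to be a little attentive to the exactness statement at $H^1$ and to the two fixed-point computations.
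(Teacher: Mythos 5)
Your proposal is correct and takes essentially the same route as the paper: the identification $G(\co_0)=\SL_1(\Lambda)$, the short exact sequence $1 \to \SL_1(\Lambda) \to \GL_1(\Lambda) \stackrel{\nrd}{\longrightarrow} \co^{\times} \to 1$, the vanishing $H^1(\sigma,\GL_1(\Lambda))=\{1\}$ from Lemma \ref{lemmaLocalH1Unram}, and finally the surjectivity of $\nrd\colon \Lambda_0^\times \to \co_0^\times$. The only cosmetic difference is at that last step, where the paper argues directly (the unramified quadratic extension $E$ embeds in $D_0$ as a maximal subfield, so $\nrd(\Lambda_0^\times) \supseteq \N_{E/F}(\co^\times) = \co_0^\times$) while you cite the standard local fact; your additional bookkeeping (the unit criterion for surjectivity onto $\co^\times$ and the bijection $\co_0^\times/\nrd(\Lambda_0^\times) \cong H^1(\sigma,\SL_1(\Lambda))$) just makes explicit what the paper leaves implicit.
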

  \begin{proof}
    Recall that $G(\co_0) = \SL_1(\Lambda)$ and that the reduced norm $\nrd: \Lambda \to \co$ is surjective.
    Hence, there is a short exact sequence of groups with $\sigma$-action
    \begin{equation*}
       1 \longrightarrow \SL_1(\Lambda) \longrightarrow \GL_1(\Lambda) \stackrel{\nrd}{\longrightarrow} \co^{\times} \longrightarrow 1.
    \end{equation*}
   In turn there is an induced long exact sequence of pointed sets
   \begin{equation*}
      1 \longrightarrow \SL_1(\Lambda_0) \longrightarrow \GL_1(\Lambda_0) \stackrel{\nrd}{\longrightarrow} \co_0^{\times}
       \longrightarrow H^1(\sigma, \SL_1(\Lambda)) \longrightarrow 1.
   \end{equation*}
   Again, the reduced norm $\Lambda_0^\times \to \co_0^\times$ is surjective. This is clear, if $D_0$ is a matrix algebra. If $D_0$ is the 
   unique central division algebra of dimension four over $F$, then this follows from
   the fact that $E$ is embedded in $D_0$ as a maximal subfield and so $\nrd(\Lambda_0^\times) \supseteq \N_{E/F}(\co^\times) = \co_0^\times$.
  \end{proof}

\begin{lemma}\label{lemLocalUnramified2}
 Assume that the extension $E/F$ is unramified. In this case 
\begin{equation*}
  H^1(\sigma, K(j)) =\{1\}
\end{equation*}
for every integer $j \geq 0$.
\end{lemma}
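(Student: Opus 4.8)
The case $j=0$ is exactly Corollary \ref{corLocalUnramified1}, so I would assume $j \geq 1$. The plan is to leverage the already-established triviality of $H^1(\sigma, G(\co_0))$ by comparing $K(j)$ with the whole group $G(\co_0)$ through the reduction map. Since $G$ is smooth over $\co_0$, the reduction $G(\co_0) \to G(\co_0/\Ip_0^j)$ is surjective, and $K(j)$ is by definition its kernel; it is in particular normal. This yields a short exact sequence of groups equipped with compatible $\sigma$-actions
\[
  1 \longrightarrow K(j) \longrightarrow G(\co_0) \longrightarrow G(\co_0/\Ip_0^j) \longrightarrow 1.
\]

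From the associated long exact sequence of pointed sets I would extract two facts. First, because $H^1(\sigma, G(\co_0)) = \{1\}$ by Corollary \ref{corLocalUnramified1}, the whole set $H^1(\sigma, K(j))$ maps to the distinguished point of $H^1(\sigma, G(\co_0))$, so by exactness the connecting map $\delta\colon G(\co_0/\Ip_0^j)^\sigma \to H^1(\sigma, K(j))$ is surjective. Second, exactness at $G(\co_0/\Ip_0^j)^\sigma$ says that $\delta$ sends precisely the image of the fixed-point reduction map $G(\co_0)^\sigma \to G(\co_0/\Ip_0^j)^\sigma$ to the distinguished point. Hence it suffices to show that this fixed-point reduction map is surjective: then $\delta$ is trivial, and since it is also surjective we conclude $H^1(\sigma, K(j)) = \{1\}$.

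The heart of the matter is therefore the computation of the $\sigma$-fixed points. Writing $G(\co_0) = G_0(\co)$ with $\sigma$ acting through $\co$, one has $G(\co_0)^\sigma = G_0(\co^\sigma) = G_0(\co_0)$. For the finite quotient I would use that $E/F$ is unramified, so $\Ip_0\co = \Ip$ and $G(\co_0/\Ip_0^j) = G_0(\co/\Ip^j)$; choosing $\zeta$ with $\co = \co_0 \oplus \zeta\co_0$ as in the proof of Lemma \ref{lemmaLocalH1Unram}, the element $\zeta - \up{\sigma}\zeta$ is a unit, which forces $(\co/\Ip^j)^\sigma = \co_0/\Ip_0^j$, and since $\Lambda_0$ is free over $\co_0$ this propagates to $G(\co_0/\Ip_0^j)^\sigma = G_0(\co_0/\Ip_0^j)$. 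Under these identifications the fixed-point reduction map is simply the reduction map $G_0(\co_0) \to G_0(\co_0/\Ip_0^j)$, which is surjective because $G_0$ is smooth over $\co_0$. This is the step where unramifiedness is indispensable: in the ramified case $\zeta - \up{\sigma}\zeta$ fails to be a unit, $(\co/\Ip^j)^\sigma$ is strictly larger than $\co_0/\Ip_0^j$, and $H^1$ becomes nontrivial, consistent with the separate ramified lemmas. Everything else is formal bookkeeping with the nonabelian cohomology sequence together with smoothness of $G$ and $G_0$.
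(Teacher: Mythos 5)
Your proof is correct and follows essentially the same route as the paper's: the identical short exact sequence $1 \to K(j) \to G(\co_0) \to G(\co_0/\Ip_0^j) \to 1$ coming from smoothness of $G$, the induced long exact sequence of pointed sets truncated using Corollary \ref{corLocalUnramified1}, the identification of the $\sigma$-fixed points with $G_0(\co_0)$ and $G_0(\co_0/\Ip_0^j)$ (which the paper phrases as $G^\sigma \cong G_0$ over $\co_0$ in the unramified case), and surjectivity of the reduction map $G_0(\co_0) \to G_0(\co_0/\Ip_0^j)$ via smoothness of $G_0$. The only difference is expository: you spell out the pointed-set exactness bookkeeping and verify the fixed-point identification by hand with the unit $\up{\sigma}\zeta - \zeta$, where the paper simply cites these facts.
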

\begin{proof}
  The statement for $j=0$ was proven in Corollary \ref{corLocalUnramified1}. Let $j\geq 1$, the short sequence of groups
  \begin{equation*}
      1 \longrightarrow K(j) \longrightarrow G(\co_0) \longrightarrow G(\co_0/\Ip_0^j) \longrightarrow 1
  \end{equation*}
   is exact, since the group scheme $G$ is smooth over $\co_0$.
   Consider the induced long exact sequence of pointed sets
  \begin{equation*}
      G_0(\co_0) \stackrel{f}{\longrightarrow} G_0(\co_0/\Ip_0^j) \longrightarrow H^1(\sigma, K(j)) \longrightarrow 1.
  \end{equation*}
   Note that the group scheme of fixed points $G^\sigma$ is isomorphic to $G_0$ over $\co_0$ since $E/F$ is unramified. 
   The reduction map $f$ is surjective and the claim follows.
\end{proof}

\subsection{The ramified case}
We assume that $E/F$ is a ramified extension. Here the situation becomes quite tedious. 
For the sake of simplicity we will assume later on that $p \neq 2$.

As before, if $D_0$ is a matrix algebra, then $\Lambda = \Lambda_0 \otimes \co$ is a maximal order and isomorphic to the full matrix algebra
$M_2(\co)$.

Assume now that $D_0$ is the unique central division algebra of dimension $4$ over $F$. Let $W/F$ be the unramified quadratic
extension of $F$ and let $\co_W$ be its valuation ring. The unramified extension
$W$ of $F$ is embedded into $D_0$ as a maximal subfield such that $D_0 = W \oplus W \omega$ with $\omega^2 = \pi_0$.
The maximal order $\Lambda_0$ is $\Lambda_0 = \co_W \oplus \co_W \omega$ with respect to this 
decomposition (cf. Vign\'eras \cite[Cor. 1.7. p.34]{Vigneras1980}).
We define $L := W \otimes_F E$, this is a field extension of degree $4$ over $F$. Further, the extension $L/E$ is unramified,
whereas $L/W$ is a ramified extension. Let $\co_L$ be the valuation ring of $L$, we have $\co_L \cong \co_W \otimes \co$.
Consequently, the order $\Lambda = \Lambda_0 \otimes \co$ is isomorphic to $\co_L \oplus \co_L \omega$ with the 
appropriate multiplication.
One can check that there is precisely one proper right ideal $I \subset \Lambda$ strictly containing $\pi \Lambda$, namely
$I = \pi \co_L \oplus \co_L \omega$. Moreover, one can verify by calculation that this right ideal
can not be generated by one element.  

 \begin{lemma}\label{lemLocalRamified1}
    Assume $p \neq 2$. If $E/F$ is a ramified extension, then
  \begin{equation*}
       H^1(\sigma, \SL_1(\Lambda)) = \{\pm 1\}.
  \end{equation*}
 \end{lemma}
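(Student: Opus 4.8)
The plan is to follow the \emph{basic observation} of \ref{sectBasicObservation} and reduce the computation of $H^1(\sigma,\SL_1(\Lambda))$ to an analysis of the principal right ideals that occur as images $\im(\phi_b)$. Let $b\in Z^1(\sigma,\SL_1(\Lambda))$ be a cocycle; regarding it as a cocycle in $\GL_1(\Lambda)$, the basic observation produces a generator $g\in\Lambda$ of the right $\Lambda_0$-module $U(b)$, so that $U(b)=g\Lambda_0$ and $\im(\phi_b)=g\Lambda$ is a full right ideal. Since $\im(\phi_b)$ has finite index, $g$ is invertible in $D$ and $b=g\,\up{\sigma}g^{-1}$. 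Rewriting the defining relation $b\,\up{\sigma}x=x$ gives the descent description
\begin{equation*}
   U(b)=\{\,x\in\Lambda\mid \up{\sigma}(g^{-1}x)=g^{-1}x\,\}=gD_0\cap\Lambda ,
\end{equation*}
so that the requirement that $g$ \emph{generate} $U(b)$ becomes $g^{-1}\Lambda\cap D_0=\Lambda_0$. Finally, from $\nrd(b)=1$ and $\nrd(b)=\nrd(g)/\up{\sigma}(\nrd(g))$ I read off $\nrd(g)\in\co_0$; as $E/F$ is ramified this forces the $E$-valuation of $\nrd(g)$ to be even, and the standard reduced-norm computation of a lattice index yields $[\Lambda:g\Lambda]=|k_0|^{4a}$ with $a=v_F(\nrd(g))$.

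Next I produce the two classes. Choosing the uniformizer $\pi$ of $\co$ with $\up{\sigma}\pi=-\pi$ (possible since $p\neq2$), one has $\co=\co_0[\pi]$ and $\Lambda=\Lambda_0\oplus\pi\Lambda_0$. For the trivial cocycle $U(1)=\Lambda_0$ and $g$ is a unit, so $\im(\phi_1)=\Lambda$ ($a=0$). For $b=-1$ the relation $\up{\sigma}x=-x$ forces the $\Lambda_0$-component of $x=x_0+\pi x_1$ to satisfy $2x_0=0$, hence $x_0=0$ (here $2$ is a unit), so $U(-1)=\pi\Lambda_0$ and $\im(\phi_{-1})=\pi\Lambda$, of index $|k_0|^4$ ($a=1$). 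Since left multiplication by a unit of $\Lambda$ preserves the index of a right ideal, the basic observation shows that $1$ and $-1$ are inequivalent; both are genuine cocycles in $\SL_1(\Lambda)$ (note $\nrd(-1)=1$), so $|H^1(\sigma,\SL_1(\Lambda))|\ge2$.

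The heart of the proof is to show that no further classes occur, i.e. that every admissible $g$ satisfies $g\Lambda\in\{\Lambda,\pi\Lambda\}$; the only input used is the descent condition $g^{-1}\Lambda\cap D_0=\Lambda_0$ together with $\nrd(g)\in\co_0$. In the split case $D_0\cong M_2(F)$, $\Lambda=M_2(\co)$, I would put $g$ in Smith normal form, $g\Lambda=\mathrm{diag}(\pi^m,\pi^n)\Lambda$ with $m\le n$ and $m+n$ the (even) valuation of $\det g$; intersecting $g^{-1}\Lambda$ with the $\sigma$-fixed matrices $M_2(F)$, whose entries have even $E$-valuation, the descent condition is violated as soon as $m\ge2$ or $n\ge2$, which forces $(m,n)\in\{(0,0),(1,1)\}$, i.e. $g\Lambda\in\{\Lambda,\pi\Lambda\}$. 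In the ramified division case I would run the analogous computation inside the explicit order $\Lambda=\co_L\oplus\co_L\omega$; here the decisive input is the structural fact recorded above, that the unique right ideal $I=\pi\co_L\oplus\co_L\omega$ strictly between $\pi\Lambda$ and $\Lambda$ is \emph{not principal}. Its non-principality is precisely what prevents a principal $\im(\phi_b)$ from realizing the intermediate index, and together with the descent condition it again pins $g\Lambda$ down to $\Lambda$ or $\pi\Lambda$. I expect this ramified ideal-theoretic step to be the main obstacle.

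It remains to identify the class in each case, which is routine bookkeeping with the reduced norm. If $g\Lambda=\Lambda$, then $g\in\GL_1(\Lambda)$; right-multiplying $g$ by a unit $\lambda\in\Lambda_0^\times$ (which leaves $b=g\,\up{\sigma}g^{-1}$ unchanged, $\lambda$ being $\sigma$-fixed) and using the surjectivity of $\nrd\colon\Lambda_0^\times\to\co_0^\times$ from the proof of Corollary \ref{corLocalUnramified1}, I may assume $\nrd(g)=1$, so $g\in\SL_1(\Lambda)$ and $b$ represents the trivial class. If $g\Lambda=\pi\Lambda$, write $g=\pi v$ with $v\in\GL_1(\Lambda)$; since $\pi$ is central and $\up{\sigma}\pi=-\pi$, this gives $b=\pi v\,\up{\sigma}(\pi v)^{-1}=-\,v\,\up{\sigma}v^{-1}$, and the same normalization of $\nrd(v)$ to $1$ exhibits $b$ as equivalent to $-1$. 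Combining the two bounds yields $H^1(\sigma,\SL_1(\Lambda))=\{\pm1\}$.
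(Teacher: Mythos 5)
You follow the paper's framework faithfully in the outer layers --- the basic observation of \ref{sectBasicObservation}, the two classes $1$ and $-1$ detected through $\im(\phi_1)=\Lambda$ and $\im(\phi_{-1})=\pi\Lambda$, their inequivalence via the ideal invariant, and the reduced-norm normalization identifying the class at the end --- and those parts are sound. The gap sits exactly in what you call the heart of the proof. You claim that the only inputs needed to force $g\Lambda\in\{\Lambda,\pi\Lambda\}$ are the descent condition $g^{-1}\Lambda\cap D_0=\Lambda_0$ together with $\nrd(g)\in\co_0$. That claim is false. Take $D_0=M_2(F)$, $\Lambda=M_2(\co)$, and $g=\bigl(\begin{smallmatrix}1&\pi\\0&\pi^2\end{smallmatrix}\bigr)$. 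Then $\nrd(g)=\pi^2=u\pi_0\in\co_0$, and since $\co=\co_0\oplus\pi\co_0$ (this is where $p\neq 2$ enters), an entrywise $\sigma$-eigenspace computation gives $\{x\in M_2(F)\mid gx\in M_2(\co)\}=M_2(\co_0)$, i.e.\ the descent condition holds; nevertheless the Smith normal form of $g$ is $\mathrm{diag}(1,\pi^2)$, so $g\Lambda\neq\Lambda$ and $g\Lambda\neq\pi\Lambda$. Of course this $g$ does not come from a cocycle in $\SL_1(\Lambda)$: one computes $g\up{\sigma}g^{-1}=\bigl(\begin{smallmatrix}1&2\pi^{-1}\\0&1\end{smallmatrix}\bigr)\notin\Lambda$. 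But that is precisely the point: the hypothesis you discarded --- integrality of the cocycle $b=g\up{\sigma}g^{-1}$ itself --- is the one doing the work, and nothing in your list of conditions replaces it. The same example shows your Smith-normal-form step is not legitimate: SNF writes $g=a\,\mathrm{diag}(\pi^m,\pi^n)\,c$ with units $a,c\in\GL_2(\co)$, and the descent condition is not an invariant of the right ideal $g\Lambda$ (it is destroyed by right multiplication by $c$; the only allowed moves are right multiplication by $\Lambda_0^\times$ and left multiplications coming from changing $b$ within its class), so verifying descent on the diagonal representative, as you propose, proves nothing about $g$. Your division-algebra case inherits the same defect: non-principality of $I=\pi\co_L\oplus\co_L\omega$ only excludes ideals lying \emph{between} $\pi\Lambda$ and $\Lambda$, and you never show that $\im(\phi_b)$ lies in that range.

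The missing idea --- the pivot of the paper's own proof --- is to use $b\in\Lambda$ and $p\neq2$ to establish the sandwich $\pi\Lambda\subseteq\im(\phi_b)\subseteq\Lambda$ \emph{before} any classification of ideals: for every $v\in\Lambda$ the elements $v_1=v+b\up{\sigma}v$ and $v_2=\pi v-b\pi\up{\sigma}v$ lie in $U(b)$, and $2\pi v=\pi v_1+v_2$, so $\pi v\in\im(\phi_b)$ because $2$ is a unit. Once this is in place the only candidates are $\Lambda$, $\pi\Lambda$, or an intermediate ideal, and the intermediate ones are then killed by the norm-parity argument in the split case (a generator $a\delta c$ with $\delta=\mathrm{diag}(1,\pi)$ would force $\up{\sigma}\nrd(ac)=-\nrd(ac)$, impossible for units) and by non-principality of $I$ in the division case. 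If you reinstate the hypothesis $b\in\SL_1(\Lambda)$ and prove this sandwich (or some substitute for it), the rest of your write-up goes through; as it stands, the central step of your argument asserts a false statement.
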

  \begin{proof}
   Return to the setting of \ref{sectBasicObservation}. We proceed in a similar fashion as in the proof of Lemma \ref{lemmaLocalH1Unram} but we
   assume directly that $b \in Z^1(\sigma, \SL_1(\Lambda))$.
   Using the assumption that $p$ is odd, we
   may further assume $\pi^2 = u\pi_0$ for some unit $u \in \co^\times_0$. 
   Note that $\co = \co_0 \oplus \pi \co_0$ and $\up{\sigma}\pi = -\pi$.
   
   Take an arbitrary $v \in \Lambda$, we claim that $\pi v \in \im(\phi_b)$. The two elements $v_1 = v + b \up{\sigma}v$ and 
   $v_2 = \pi v - b \pi \up{\sigma}v$ are in $U(b)$. Clearly, $2 \pi v = \pi v_1 + v_2$ and the claim follows, since $2$ is a unit in $\co_0$. 
   Consequently, we have $\pi \Lambda \subseteq \im(\phi_b) \subseteq \Lambda$.
   The image of $\phi_b$ is a right ideal in $\Lambda$.
   We distinguish three cases:

   Case 1: Suppose $\im(\phi_b) = \Lambda$, then the generator $g \in U(b)$ is a unit in $\Lambda$ and satisfies $b = g \up{\sigma}g^{-1}$.
   From $\nrd(b) = 1$ we deduce that $\nrd(g) = \up{\sigma}\nrd(g)  \in \co_0^\times$. Multiplying $g$ from the right with an element in $\Lambda_0^\times$
   having reduced norm $\nrd(g)^{-1}$, we see that $b$ represents the trivial class in $H^1(\sigma,\SL_1(\Lambda))$.

   Case 2: Suppose $\im(\phi_b) = \pi\Lambda$. The generator $g \in U(b)$ is of the form $\pi h$, where $h \in \Lambda^\times$. From 
    this we see the relation $b = -h\up{\sigma} h^{-1}$. As in case one we can achieve that $h$ has reduced norm $1$ and so 
    $b$ represents the class of $-1$ in $H^1(\sigma, \SL_1(\Lambda))$. By the way, using the last remark made in \ref{sectBasicObservation}, 
    it follows that the cocylces $1$ and $-1$ can not be equivalent (even over $\GL_1(\Lambda)$).

   Case 3: Suppose $\pi\Lambda \subsetneq \im(\phi_b) \subsetneq \Lambda$. We distinguish whether $D_0$ is split or not.

   Suppose $D_0 \cong M_2(F)$ and choose an isomorphism $\Lambda \cong M_2(\co)$. In this case
   we know that $\im(\phi_b)$ is generated (as right ideal) by an element of the form $a \delta$ where $a \in \GL_2(\co)$ and 
   \begin{equation*}
       \delta = \begin{pmatrix}
                   1 & 0 \\
                   0 & \pi \\
                \end{pmatrix}
   \end{equation*}
  (cf. (17.7) \cite{Reiner2003}). It follows that the generator $g \in U(b)$ is $g = a\delta c$ for some unit $c \in \Lambda^\times$.
   We get $b \up{\sigma}(a\delta c)= a \delta c$. Applying the reduced norm, we find $\up{\sigma}(\nrd(a)\nrd(c)) = - \nrd(a)\nrd(c)$.
   This implies $\nrd(ac) \in \pi\co$ which is a contradiction to $a$ and $c$ being units.

  Suppose that $D_0$ is a division algebra. Since $U(b)$ is generated by one element, the same must be true for $\im(\phi_b)$.
  However, as pointed out before, there is no such right ideal in $\Lambda$ properly containing $\pi\Lambda$.
  \end{proof}

\begin{lemma}\label{lemLocalRamified2}
 Let $p \neq 2$ and let $E/F$ be a ramified extension. For every $j \geq 1$ the first cohomology $H^1(\sigma,K(j))$ is trivial. 
\end{lemma}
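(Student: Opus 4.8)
The plan is to exploit that for $j \geq 1$ the congruence subgroup $K(j)$ is a pro-$p$ group with $p$ odd, so that the order-two group generated by $\sigma$ has cohomology killed by $2$ and therefore vanishes on every abelian subquotient; a d\'evissage along the congruence filtration then propagates this vanishing from the graded pieces to $K(j)$ itself. First I would record the structural facts about the filtration $K(j) \supseteq K(j+1) \supseteq \cdots$. Each $K(n)$ is normal and open in $G(\co_0)$, is $\sigma$-stable (as $\sigma$ is defined over $\co_0$ and preserves $\Ip_0^n$), and one has $\bigcap_{n} K(n) = \{1\}$, so that $K(j)$ is profinite with $K(j) = \varprojlim_{n \geq j} K(j)/K(n)$. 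Moreover the commutator estimate $[K(m), K(n)] \subseteq K(m+n)$ gives, for $j \geq 1$, the inclusion $[K(j), K(n)] \subseteq K(n+1)$; hence $K(n)/K(n+1)$ is \emph{central} in $K(j)/K(n+1)$. Finally, smoothness of $G$ over $\co_0$ (see \ref{subsecGroupSchemes}) identifies each graded piece $K(n)/K(n+1)$ with a finite-dimensional $k_0$-vector space on which $\sigma$ acts linearly.

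The key local input is that $H^1(\sigma, M)$ is trivial for any $k_0$-vector space $M$ carrying a $\sigma$-action: the residue characteristic $p$ being odd, $2$ is invertible on $M$, and if $m + \up{\sigma}m = 0$ then $m = (1-\sigma)(\tfrac{1}{2}m)$ is a coboundary. With this in hand I would prove $H^1(\sigma, K(j)/K(n)) = \{1\}$ for all $n \geq j$ by induction on $n$, the base case $n = j$ being trivial. For the inductive step consider the central extension
\begin{equation*}
  1 \longrightarrow K(n)/K(n+1) \longrightarrow K(j)/K(n+1) \longrightarrow K(j)/K(n) \longrightarrow 1
\end{equation*}
and its exact sequence of pointed sets
\begin{equation*}
  H^1(\sigma, K(n)/K(n+1)) \longrightarrow H^1(\sigma, K(j)/K(n+1)) \longrightarrow H^1(\sigma, K(j)/K(n)).
\end{equation*}
Since the right-hand term is trivial by induction, the whole middle term is the preimage of the base point and hence, by exactness, lies in the image of the left-hand term; as that term is trivial as well, we conclude $H^1(\sigma, K(j)/K(n+1)) = \{1\}$.

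Finally I would pass to the limit. Given a cocycle $b \in Z^1(\sigma, K(j))$, its image $b_n$ in each finite quotient $K(j)/K(n)$ is a coboundary by the previous step, so the set of $\bar c_n \in K(j)/K(n)$ with $b_n = \bar c_n \, \up{\sigma}(\bar c_n)^{-1}$ is nonempty and finite. Reduction modulo $K(n)$ is a $\sigma$-equivariant homomorphism, hence carries a solution at level $n+1$ to one at level $n$; the solution sets thus form an inverse system of nonempty finite sets, whose limit is nonempty and produces $c \in K(j) = \varprojlim_n K(j)/K(n)$ with $b = c\, \up{\sigma}(c)^{-1}$. Thus $b$ is trivial and $H^1(\sigma, K(j)) = \{1\}$. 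The main obstacle is the bookkeeping of the d\'evissage in the nonabelian setting: one must check that the graded pieces are genuinely central (this is precisely where $j \geq 1$ enters, through $[K(j),K(n)] \subseteq K(n+1)$) so that the long exact cohomology sequence yields the clean implication above, and one must justify the compactness argument that trivializes $b$ simultaneously at all levels.
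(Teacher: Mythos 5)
Your proof is correct, but it takes a genuinely different route from the paper's. The paper deduces the lemma from the preceding structure theory: it first invokes Lemma \ref{lemLocalRamified1} (the lattice-theoretic computation $H^1(\sigma,\SL_1(\Lambda))=\{\pm 1\}$, which rests on an analysis of right ideals in the order $\Lambda$), then shows by a congruence argument that $-1$ cannot be equivalent to any cocycle lying in $K(1)$ (reducing $-1 = c^{-1}b\up{\sigma}c$ modulo $\pi_0$ forces $c \in \pi\Lambda$, contradicting $c \in \Lambda^\times$), and finally runs the long exact sequence of $1 \to K(j) \to G(\co_0) \to G(\co_0/\Ip_0^j) \to 1$ as in Lemma \ref{lemLocalUnramified2}, using smoothness of $G_0$ and $G^\sigma = G_0$ for $p \neq 2$. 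You instead give a self-contained pro-$p$ d\'evissage: since $K(j)$ is pro-$p$ with $p$ odd, the graded pieces of the congruence filtration are abelian groups of odd order on which $H^1$ of an order-two group vanishes, and the central extensions plus the inverse-limit compactness argument propagate this to $K(j)$. Your argument never uses Lemma \ref{lemLocalRamified1}, the ramification hypothesis, or any quaternionic structure; it proves the stronger, general statement that $H^1(\sigma, K(j)) = \{1\}$ for $j \geq 1$ for any such congruence subgroup of a linear group scheme over $\co_0$ with an involution when $p \neq 2$ (in particular it reproves the $j\geq 1$ part of Lemma \ref{lemLocalUnramified2} uniformly). What the paper's route buys in exchange is economy within its own development --- both ingredients it cites are already needed elsewhere (e.g.\ in Corollary \ref{corSizeOfH1}) --- and the intermediate claim that $H^1(\sigma,K(1)) \to H^1(\sigma,G(\co_0))$ is trivial, which is exactly the form in which the lemma is combined with Lemma \ref{lemLocalRamified1} later. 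Your steps are all sound: the commutator estimate $[K(m),K(n)] \subseteq K(m+n)$ gives the required centrality, exactness of the pointed-set sequence at the middle term needs only normality of the kernel, and the limit argument via nonempty finite solution sets is the standard compactness device; the only cosmetic remark is that you do not really need smoothness to handle the graded pieces --- the explicit map $1 + \pi_0^n A \mapsto A \bmod \pi_0$ shows they are elementary abelian $p$-groups, which is all your key vanishing input requires.
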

\begin{proof}
  We claim that the map $H^1(\sigma, K(1)) \to H^1(\sigma,G(\co_0))$ is trivial.
  To see this, suppose that $-1$ is equivalent to a cocyle $b \in K(1)$. Under this assumption there is some $c \in \SL_1(\Lambda)$ such that
  $-1 = c^{-1}b\up{\sigma}c$. Considering this equation modulo $\pi_0$, we get $-c \equiv \up{\sigma}c \mod \pi_0$. Let $\pi \in \co$ denote, as before, a
  uniformizer satisfying $\pi^2 = u\pi_0$, $\up{\sigma}\pi = -\pi$ and $\co = \co_0 \oplus \pi \co_0$, we deduce $c \in \pi \Lambda$. This is
  a contradiction to the assumption that $c$ is a unit, which proves the claim.

  Finally, apply the argument of Lemma \ref{lemLocalUnramified2} using that $G^\sigma = G_0$ since $p\neq 2$.
\end{proof}

\begin{remark}\label{remRohlfsGeneral}
  Many results of this section can be deduced from Rohlfs general treatment (see Satz 2.6 and Korollar 2.7 in \cite{Rohlfs1978}).
  Since most results follow directly in the given situation we decided to provide independent proofs.

  It seems to be a more difficult task to give a general description of $H^1(\sigma, G(\co_0))$ in the ramified case
  when the residual characteristic is $p=2$.
  One can not expect a simple result like Lemma \ref{lemLocalRamified2}. 
  This follows from the work of Rohlfs, who determined the cohomology sets for quadratic extensions of $\bbQ$ 
  (cf. Table to Satz 4.1 in \cite{Rohlfs1978}).
  For the applications we have in mind it is sufficient to know that the cohomology set $H^1(\sigma, G(\co_0))$ is finite (see Kor. 2.5 in \cite{Rohlfs1978}).
\end{remark}

\subsection{The orders of certain finite groups}
In this section we summarize some results on the cardinalities of the involved finite groups.
These results are well-known or can be obtained using the well-known tricks. We simply gather these results here.
We keep the notation used throughout the appendix. 
In particular, $F$ denotes a finite extension of some $p$-adic field $\bbQ_p$ and $E$ is a quadratic extension field of $F$. 
We write $\Nr(\Ip_0)$ for the cardinality of the residue class field $k_0 = \co_0/\Ip_0$.
\begin{lemma}\label{lemCounting1}
For every positive integer $e$ the following holds:
\begin{equation*}
 |\SL_2(\co_0/\Ip_0^e)| = \Nr(\Ip_0)^{3e}(1-\Nr(\Ip_0)^{-2})
\end{equation*}
\end{lemma}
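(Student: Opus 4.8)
The plan is to reduce the computation to two standard facts about the finite local ring $R := \co_0/\Ip_0^e$: that $\SL_2(R)$ is the kernel of the surjective determinant homomorphism $\det\colon \GL_2(R) \to R^\times$, and that invertibility of a matrix over the local ring $R$ is detected modulo its maximal ideal. Writing $q := \Nr(\Ip_0)$ for the cardinality of the residue field $k_0$, one has $|R| = q^e$ and a maximal ideal $\mathfrak{m}$ of cardinality $|\mathfrak{m}| = q^{e-1}$, so that $|R^\times| = q^e - q^{e-1} = q^{e-1}(q-1)$.

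First I would compute $|\GL_2(R)|$. The reduction map $M_2(R) \to M_2(k_0)$ is a surjective ring homomorphism with kernel $M_2(\mathfrak{m})$, and since $R$ is local a matrix in $M_2(R)$ is a unit precisely when its reduction lies in $\GL_2(k_0)$. Hence $\GL_2(R)$ is the full preimage of $\GL_2(k_0)$, which gives
\begin{equation*}
 |\GL_2(R)| = |\GL_2(k_0)| \cdot |M_2(\mathfrak{m})| = (q^2-1)(q^2-q)\, q^{4(e-1)},
\end{equation*}
using the classical count $|\GL_2(k_0)| = (q^2-1)(q^2-q)$ of ordered bases of $k_0^2$. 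Dividing by $|R^\times|$ and simplifying via $(q^2-1)(q^2-q) = q(q-1)^2(q+1)$ then collapses the expression to $q^{3e}(1-q^{-2})$, which is the claimed value.

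There is no genuine obstacle here; the only point meriting a word of care is the surjectivity of both the reduction map and the determinant map, each of which follows from smoothness of the relevant group scheme over $\co_0$ (equivalently, from Hensel's lemma), exactly as was used for the reduction map $f$ in the proof of Lemma \ref{lemLocalUnramified2}. Alternatively, one could argue by induction on $e$: for $e \geq 2$ the reduction $\SL_2(\co_0/\Ip_0^e) \to \SL_2(\co_0/\Ip_0^{e-1})$ is surjective with kernel $\{I + \Ip_0^{e-1}X\}$ isomorphic to the additive group of trace-zero matrices over $k_0$, of order $q^3$, whence $|\SL_2(R)| = q^{3(e-1)}|\SL_2(k_0)| = q^{3(e-1)}\cdot q(q^2-1)$, yielding the same number after expanding. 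I would likely present the first route, since it isolates the computation into the two elementary cardinality facts above.
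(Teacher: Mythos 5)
Your proof is correct, and it is essentially the argument the paper intends: the paper states Lemma \ref{lemCounting1} without proof, declaring it ``well-known'', and the method it sketches for Lemma \ref{lemCounting2} (count the units of the order modulo the radical, lift through the nilpotent kernel by the ``usual trick'', then divide by the surjective norm map) is precisely your computation of $|\GL_2(\co_0/\Ip_0^e)|$ as the preimage of $\GL_2(k_0)$ followed by division by $|(\co_0/\Ip_0^e)^\times|$ via the determinant. The only quibble is that no smoothness or Hensel argument is actually needed in your first route, since the surjectivity of $M_2(\co_0/\Ip_0^e) \to M_2(k_0)$ is a triviality of quotient rings and the determinant is onto because of diagonal matrices; such a lifting argument is genuinely needed only in your alternative inductive route.
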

\begin{lemma}\label{lemCounting2}
Let $e$ be a positive integer and assume $D_0$ is a division algebra, then 
$|G_0(\co_0/\Ip_0^e)| = \Nr(\Ip_0)^{3e}(1+\Nr(\Ip_0)^{-1})$. Moreover,
\begin{itemize}
 \item if $E/F$ is unramified,
                       then $|G(\co_0/\Ip_0^e)| = \Nr(\Ip_0)^{6e}(1-\Nr(\Ip_0)^{-2})$,
 \item if $E/F$ is ramified, then $|G(\co_0/\Ip_0^e)| = \Nr(\Ip_0)^{6e}(1+\Nr(\Ip_0)^{-1})$,
\end{itemize}
 
\end{lemma}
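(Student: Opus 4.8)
The plan is to reduce all three cardinalities to point counts over the residue field by using smoothness, and then to carry out the residue-field computations through the reduced-norm exact sequence. Throughout write $q := \Nr(\Ip_0) = |k_0|$. The basic tool is the standard smoothness count: if $\mathcal{G}$ is a smooth group scheme of relative dimension $N$ over a complete discrete valuation ring $\mathcal{O}$ whose residue field has $\ell$ elements, then for $j \geq 1$ the reduction map $\mathcal{G}(\mathcal{O}/\mathfrak{m}^{j+1}) \to \mathcal{G}(\mathcal{O}/\mathfrak{m}^{j})$ is surjective with kernel of order $\ell^{N}$, whence $|\mathcal{G}(\mathcal{O}/\mathfrak{m}^{e})| = |\mathcal{G}(k)|\,\ell^{N(e-1)}$. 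This is exactly the infinitesimal lifting already exploited for the exact sequence in Lemma~\ref{lemLocalUnramified2}. Since $G_0$ is smooth of relative dimension $3$ over $\co_0$ with residue field $k_0$, this immediately reduces the first formula to computing $|G_0(k_0)|$.

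For $G$ I would first use the defining adjunction of the Weil restriction to identify $G(\co_0/\Ip_0^e) = (G_0\times_{\co_0}\co)(\co/\Ip_0^e\co) = \SL_1(\Lambda)(\co/\Ip_0^e\co)$, and then apply the smoothness count to $\SL_1(\Lambda) = G_0\times_{\co_0}\co$, which is smooth of relative dimension $3$ over $\co$ with residue field $k$. If $E/F$ is unramified then $\Ip_0^e\co = \Ip^e$ and $|k| = q^2$, so $|G(\co_0/\Ip_0^e)| = |\SL_1(\Lambda)(k)|\,(q^2)^{3(e-1)}$; if $E/F$ is ramified then $\Ip_0^e\co = \Ip^{2e}$ and $|k| = q$, so $|G(\co_0/\Ip_0^e)| = |\SL_1(\Lambda)(k)|\,q^{3(2e-1)}$. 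Thus all three assertions follow once the residue-field counts $|G_0(k_0)|$ and the two values of $|\SL_1(\Lambda)(k)|$ are determined.

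Each residue-field count I would extract from the short exact sequence $1 \to \SL_1 \to (\,\cdot\,)^{\times} \stackrel{\nrd}{\to} Z^{\times} \to 1$ (with $Z$ the center), giving $|\SL_1(R)| = |R^{\times}|/|Z^{\times}|$ as soon as the reduced norm is surjective, which is visible from the explicit norm forms. For $G_0(k_0)$ the relevant ring is $\Lambda_0/\Ip_0\Lambda_0 = \Lambda_0/\IP^2$, a local ring with residue field of order $q^2$; its unit group (the complement of the radical) has order $q^4 - q^2$, and division by $|k_0^{\times}| = q-1$ yields $q^2(q+1) = q^{3}(1+q^{-1})$, so that $|G_0(\co_0/\Ip_0^e)| = q^{3}(1+q^{-1})q^{3(e-1)} = \Nr(\Ip_0)^{3e}(1+\Nr(\Ip_0)^{-1})$. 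In the unramified case $\Lambda\otimes_{\co}k$ is the reduction modulo $\Ip$ of the Eichler order of level $\Ip$ recalled above; reading off its units and the reduced norm (which descends to the determinant $\bar a\bar d$) gives $|\SL_1(\Lambda)(k)| = (q^2-1)q^4 = q^6(1-q^{-2})$, whence $|G(\co_0/\Ip_0^e)| = \Nr(\Ip_0)^{6e}(1-\Nr(\Ip_0)^{-2})$. In the ramified case, from the description $\Lambda = \co_L\oplus\co_L\omega$ with $\omega^2=\pi_0$ recalled above, $\Lambda\otimes_{\co}k$ is the four-dimensional $k$-algebra $\mathbb{F}_{q^2}\oplus\mathbb{F}_{q^2}\bar\omega$ with $\bar\omega^2=0$ and $\bar\omega a = a^{q}\bar\omega$; here the reduced norm descends to the field norm $N_{\mathbb{F}_{q^2}/\mathbb{F}_q}$ on the $\bar\omega$-free part, so $|\SL_1(\Lambda)(k)| = (q+1)q^2 = q^3(1+q^{-1})$ and $|G(\co_0/\Ip_0^e)| = \Nr(\Ip_0)^{6e}(1+\Nr(\Ip_0)^{-1})$.

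The step I expect to be the genuine obstacle is the ramified residue-field count: one must reduce $\Lambda$ modulo $\Ip$ correctly, keep careful track of the Galois-twisted multiplication inherited from $D_0 = W\oplus W\omega$ (so that the residue algebra is the twisted $\mathbb{F}_{q^2}[\bar\omega]$ rather than a split algebra), and verify that the reduced norm descends precisely to $N_{\mathbb{F}_{q^2}/\mathbb{F}_q}$, whose kernel has the stated order $q+1$. By contrast, once the smoothness reduction is in place the remaining two counts are routine, and the non-maximality of $\Lambda$ never causes trouble because smoothness confines all of the work to the special fibre.
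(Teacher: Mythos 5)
Your proposal is correct and follows essentially the same route as the paper: the paper's (sketched) proof likewise works with the explicit structure of $\Lambda$ (the Eichler order of level $\Ip$ in the unramified case, $\co_L \oplus \co_L\omega$ in the ramified case), counts units of the residue algebra, lifts by "the usual trick" (the same infinitesimal-lifting count you phrase as smoothness), and divides out by the surjective reduced norm. The only difference is bookkeeping — the paper lifts the unit groups to level $e$ and then imposes norm one, whereas you impose norm one on the special fibre and lift the group scheme $\SL_1$ directly — and your write-up carries out the $G_0$ and ramified-case computations that the paper leaves as "well-known".
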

\begin{proof}
  We only indicate the proof for the claim when $E/F$ is unramified. In this case $\Lambda$ is an Eichler order of level $\pi\co$, i.e.
  \begin{equation*}
    \Lambda \cong \{\:\begin{pmatrix}x&y\\ \pi z & w\end{pmatrix}\:|\: x, y, z, w \in \co\:\}.
  \end{equation*}
  One counts the group of units $|(\Lambda/\pi \Lambda)^\times| = |k|^2 (|k|-1)^2 = \Nr(\Ip_0)^8(1-\Nr(\Ip_0)^{-2})^2$
  and (by the usual trick) one obtains  $|(\Lambda/\pi^e \Lambda)^\times| = \Nr(\Ip_0)^{8e}(1-\Nr(\Ip_0)^{-2})^2$.
  The reduced norm $\nrd: (\Lambda/\pi^e \Lambda)^\times \to (\co/\pi^e\co)^\times$ is onto and so
  \begin{equation*}
      |\SL_1(\Lambda/\pi^e\Lambda)| = \Nr(\Ip_0)^{6e}(1-\Nr(\Ip_0)^{-2}).
  \end{equation*}
\end{proof}
For a positive integer $e$, define 
\begin{equation*}
  Q_e := \frac{|G_0(\co_0/\Ip_0^e)|}{\sqrt{|G(\co_0/\Ip_0^e)|}}.
\end{equation*}
With the help of Lemma \ref{lemCounting1} and Lemma \ref{lemCounting2} is easy to verify to following assertions.
\begin{lemma}\label{lemLocalCalculationIndex}
 \begin{enumerate}
  \item If $E/F$ is unramified and $D_0$ is split, then 
  \begin{equation*}
     Q_e^2 = (1-\Nr(\Ip_0)^{-2})(1+\Nr(\Ip_0)^{-2})^{-1}.
  \end{equation*}
  \item If $E/F$ is unramified and $D_0$ is a division algebra, then 
  \begin{equation*}
     Q_e^2 = (1+\Nr(\Ip_0)^{-1})(1-\Nr(\Ip_0)^{-1})^{-1}.
  \end{equation*}
  \item If $E/F$ is ramified and $D_0$ is split, then 
  \begin{equation*}
     Q_e^2 = 1-\Nr(\Ip_0)^{-2}.
  \end{equation*}
   \item If $E/F$ is ramified and $D_0$ is a division algebra, then 
  \begin{equation*}
     Q_e^2 = 1+\Nr(\Ip_0)^{-1}.
  \end{equation*}
 \end{enumerate}
\end{lemma}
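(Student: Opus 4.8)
The plan is to substitute the cardinality formulas of Lemmas \ref{lemCounting1} and \ref{lemCounting2} directly into the definition $Q_e = |G_0(\co_0/\Ip_0^e)|/\sqrt{|G(\co_0/\Ip_0^e)|}$ and to simplify in each of the four cases. The only structural ingredient I need beyond those two lemmas is a uniform expression for the denominator $|G(\co_0/\Ip_0^e)|$ in the split case, which I obtain from the restriction-of-scalars description of $G$ recalled in \ref{subsecGroupSchemes}.

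First I would record the numerator. When $D_0 \cong M_2(F)$ splits we have $G_0 = \SL_2$ over $\co_0$, so Lemma \ref{lemCounting1} gives $|G_0(\co_0/\Ip_0^e)| = \Nr(\Ip_0)^{3e}(1-\Nr(\Ip_0)^{-2})$; when $D_0$ is a division algebra Lemma \ref{lemCounting2} gives $|G_0(\co_0/\Ip_0^e)| = \Nr(\Ip_0)^{3e}(1+\Nr(\Ip_0)^{-1})$. These are the two possible numerators (to be squared) entering $Q_e^2$.

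For the denominator in the division cases (2) and (4) I would simply read $|G(\co_0/\Ip_0^e)|$ off Lemma \ref{lemCounting2}. In case (2) the ratio $Q_e^2$ becomes $(1+\Nr(\Ip_0)^{-1})^2/(1-\Nr(\Ip_0)^{-2})$, and in case (4) it becomes $(1+\Nr(\Ip_0)^{-1})^2/(1+\Nr(\Ip_0)^{-1})$; applying the factorization $1-x^{-2} = (1-x^{-1})(1+x^{-1})$ and cancelling yields the stated values. For the split cases (1) and (3), where Lemma \ref{lemCounting2} does not apply, I would use that $G = \Res_{\co/\co_0}(G_0\times_{\co_0}\co)$ gives $G(\co_0/\Ip_0^e) = G_0(\co/\Ip_0^e\co) = \SL_2(\co/\Ip_0^e\co)$ and then invoke Lemma \ref{lemCounting1} over $E$. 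Here the bookkeeping is: if $E/F$ is unramified then $\Ip_0\co = \IP$ with $\Nr(\IP) = \Nr(\Ip_0)^2$, so $|\SL_2(\co/\IP^e)| = \Nr(\Ip_0)^{6e}(1-\Nr(\Ip_0)^{-4})$; if $E/F$ is ramified then $\Ip_0\co = \IP^2$ with $\Nr(\IP) = \Nr(\Ip_0)$, so $|\SL_2(\co/\IP^{2e})| = \Nr(\Ip_0)^{6e}(1-\Nr(\Ip_0)^{-2})$. Dividing the squared numerator by these and using $1-x^{-4} = (1-x^{-2})(1+x^{-2})$ produces cases (1) and (3) respectively.

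The computation is routine; the only place demanding a little care is the split-case denominator, where one must correctly translate the exponent $e$ and the residue norm under the base change $\co_0 \to \co$ (the exponent doubles in the ramified case while the norm stays fixed, whereas in the unramified case the exponent is unchanged but the norm squares). Once this translation is in place each case reduces to cancelling a single factor of the form $1\pm\Nr(\Ip_0)^{-k}$, and in particular one sees that $Q_e$ is independent of $e$.
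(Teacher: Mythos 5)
Your proposal is correct and matches the paper's intended argument: the paper offers no proof beyond the remark that the formulas follow from Lemmas \ref{lemCounting1} and \ref{lemCounting2}, which is exactly the substitution you carry out. Your explicit treatment of the split cases via $G(\co_0/\Ip_0^e) = \SL_2(\co/\Ip_0^e\co)$, with the correct doubling of the exponent in the ramified case and squaring of the residue norm in the unramified case, fills in precisely the bookkeeping the paper leaves implicit.
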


\newpage
\bibliography{/users/kionke/Documents/Dissertation/Library}{}
\bibliographystyle{amsplain}
\end{document}